\title{Tensor stable moduli stacks and refined representations of quivers}
\author{Tarig Abdelgadir}
\address{The Abdus Salam International Centre for Theoretical Physics, 
Strada Costiera 11, 
Trieste 34151, 
Italy
}
\email{tarig.m.h.abdelgadir@gmail.com}
\author{Daniel Chan}
\address{School of Mathematics and Statistics, 
UNSW Sydney, 
NSW 2052,	
Australia
}
\email{danielc@unsw.edu.au}
\begin{document}
\maketitle

\begin{abstract}
In this paper, we look at the problem of modular realisations of derived equivalences, and more generally, the problem of recovering a Deligne-Mumford stack $\bX$ and a bundle $\cT$ on it, via some moduli problem (on $\bX$ or $A = \End_{\bX} \cT$). The key issue is, how does one incorporate some of the monoidal structure of $\Coh(\bX)$ into the moduli problem. To this end, we introduce a new moduli stack, the tensor stable moduli stack which generalises the notion of the Serre-stable moduli stack. We then show how it can be used both for stack recovery and the modular realisation problem for derived equivalences. We also study the moduli of refined representations and how it addresses these problems. Finally, we relate the two approaches when $\cT$ is a tilting bundle which is a direct sum of line bundles. 
\end{abstract}

\section{Introduction}

Moduli spaces are a fruitful way to study a $k$-linear abelian category {\sf C}. The McKay correspondence provides one spectacular example. Here, we let $G \subset \SL(2,k)$ be a finite group and take {\sf C} to be the category of $G$-equivariant $k[x,y]$-modules. There is a natural moduli space of objects in {\sf C}, corresponding to some natural discrete invariant, which recovers the minimal resolution of $\bA^2/G$. Another highly influential example is the point scheme of Artin-Tate-van den Bergh \cite{ATV90}. Here {\sf C} is the category of graded modules over some noncommutative graded algebra. The point scheme is the rigidified moduli space of ``point modules'' in {\sf C}. In the case of the 3-dimensional Sklyanin algebra, this point scheme is an elliptic curve, and the algebra can be fully analysed since it has a codimension one quotient which is a twisted version of a homogeneous co-ordinate ring on the elliptic curve.

This raises the natural question, what moduli problems should one study, and the above examples may tempt one to think that the only interesting one is the (rigidified) moduli of isomorphism classes of objects in {\sf C}. However, this is far from the case, and the easiest way to appreciate this is the {\em tautological moduli problem}: How do you recover a Deligne-Mumford (DM) stack $\bX$ as a (rigidified) moduli problem on $\text{\sf C}=\Coh(\bX)$? As stated in this generality, this problem is doomed to failure, for objects in abelian categories have connected automorphism groups whereas the stabiliser groups of DM stacks are finite groups. A deeper reason is that Gabriel-Rosenberg's theorem for reconstructing schemes from their abelian categories of quasi-coherent sheaves fails for DM stacks.

Lurie's Tannakian duality for geometric stacks \cite{Lurie} provides the inspiration to get around these issues: \cite[Theorem 5.11]{Lurie} states that one may recover $\bX$ from its monoidal abelian category $(\Coh(\bX), \otimes)$. Therefore the key to solving the tautological moduli problem is to remember some of the monoidal structure of  $\Coh(\bX)$. One of the main goals of this paper is to construct and study interesting moduli stacks on an abelian category {\sf C} which somehow incorporate residues of this monoidal structure in {\sf C} (note {\sf C} is not necessarily monoidal). Gabriel-Rosenberg's theorem shows that we need not remember all the monoidal structure, since we can safely ignore all monoidal structure in the case of schemes. The moduli stacks of interest for us are those which address either the tautological moduli problem above, or the related problem of {\em modular realisations of derived equivalences}. The latter seeks to express a derived equivalence between {\sf C} and $\bX$ as a Fourier-Mukai transform whose kernel is the universal bundle for some moduli problem on {\sf C}. 


The first moduli stack we construct  generalises the rather ad hoc Serre stable moduli stack introduced in  Chan-Lerner's \cite{CL}. To motivate this, we consider the case where $\bX=X$ is a separated scheme and  there is a simple solution to the tautological moduli problem. Take $\Delta \subset X \times X$ to be the diagonal, then $\cO_{\Delta}$ is the universal skyscraper sheaf on $X$. For stacks, it is not clear what the correct notion of a skyscraper sheaf is since they can ``fractionate'' in the language of physics. In Section~\ref{sec:sky}, we address this problem and define the moduli stack $\bM$ of skyscraper sheaves (suitably rigidified to remove the common $k^{\times}$ in automorphism groups). For example, the stack $B \mu_2 := [\Spec k / \mu_2]$ has two line bundles $k_0$ and $k_-$ corresponding to the trivial and non-trivial characters of $\mu_2$. For us, a skyscraper sheaf on $B\mu_2$ will be the direct sum $k_0 \oplus k_-$.

To incorporate the monoidal structure, we consider line bundles $\cL_1,\ldots, \cL_s \in \Pic \bX$ and the corresponding rationally defined self-maps $\cL_i \otimes_{\bX} (?) \colon \bM \dashrightarrow \bM$. The {\em tensor stable moduli stack} $\bM^{\cL_1,\ldots,\cL_s}$ is defined in Definition~\ref{defn:doubletensor} as a mild modification of the simultaneous fixed point stack. When $\bX$ is a scheme, the maps $\cL_i \otimes_{\bX} (?)$ are the identity so the procedure is trivial, but this is no longer the case when $\bX$ has stacky points. One of our main results is the following rather imprecise re-statement of Theorem~\ref{thm:tautological} which vastly generalises the rather ad hoc \cite[Theorem~9.6]{CL}.
\begin{theorem}
Let $\bX$ be a separated quasi-projective stack and suppose that $\cL_1\oplus \ldots \oplus \cL_s$ is faithful. Then $\bM^{\cL_1,\ldots,\cL_s} \simeq \bX$.
\end{theorem}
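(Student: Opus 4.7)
The plan is to construct an isomorphism $\Phi\colon \bX \to \bM^{\cL_1,\ldots,\cL_s}$ coming from the structure sheaf of the diagonal $\cO_\Delta \in \Coh(\bX \times \bX)$, viewed as a family of skyscraper sheaves on $\bX$ parametrised by the first factor. The first task is to check that this family has a canonical tensor-stable structure: the $\cO_{\bX\times\bX}$-module structure on $\cO_\Delta$ produces, for each $i$, a tautological isomorphism $\pi_1^*\cL_i\otimes\cO_\Delta \cong \pi_2^*\cL_i\otimes\cO_\Delta$, which exhibits $\cO_\Delta$ as a point of the simultaneous fixed stack; the mild modification imposed by Definition~\ref{defn:doubletensor} should be automatic, and the cocycle compatibility among tensor products of the $\cL_i$ reduces to commutativity of the tensor product of line bundles on $\bX$.

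Next I would show that $\Phi$ is an isomorphism of algebraic stacks by checking, separately, that it induces a bijection on geometric points and an isomorphism on inertia. Both reductions are local on $\bX$ and reduce, by \'etale slice arguments around a stacky point $x \in \bX$ with stabiliser $G_x$, to statements about $G_x$-representations and their twists by the characters $\chi_i := \cL_i|_x \in \widehat{G_x}$. The faithfulness hypothesis on $\bigoplus \cL_i$ enters precisely here: it says that the collection $\{\chi_i\}$ is a faithful set of characters, i.e.\ no nontrivial element of $G_x$ acts trivially on every $\chi_i$.

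For essential surjectivity on geometric points, I would argue that a tensor-stable skyscraper $M$ supported at $x$ comes equipped with isomorphisms $\chi_i \otimes M \xrightarrow{\sim} M$, and that the faithfulness of $\{\chi_i\}$ forces the character of $M$ as a $G_x$-representation to be a scalar multiple of the regular character, so $M \cong k[G_x] \cong \cO_\Delta|_x$ as tensor-stable objects. For the inertia comparison, I would show that an automorphism $\phi$ of $\cO_\Delta|_x$ compatible with all the tensor-stable isomorphisms $\chi_i\otimes(?) \cong (?)$ must commute with multiplication by each $\chi_i$ inside $\End_{G_x}(k[G_x]) \cong k[G_x]$; the faithfulness hypothesis then pins $\phi$ down to an element of $G_x \subset k[G_x]^\times$, matching the stabiliser of $x$ in $\bX$.

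I expect the main technical obstacle to be the inertia step: not just exhibiting the injection $G_x \hookrightarrow \Aut_{\bM^{\cL_1,\ldots,\cL_s}}(\cO_\Delta|_x)$ but showing it is surjective, which requires carefully tracing through the tensor-stability condition (in particular, the ``mild modification'' imposed in the definition) to see that only the group-like elements of $k[G_x]^\times$ survive. Once this is in hand, combining geometric-point surjectivity with the inertia isomorphism and representability of $\Phi$ (which follows from the injection on stabilisers together with a separatedness argument for the moduli of skyscrapers) yields that $\Phi$ is an isomorphism of stacks.
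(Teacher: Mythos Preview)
Your forward map agrees with the paper's: both exhibit $\Delta_*\cO_{\bX}$ as a tensor-stable family via the projection-formula isomorphisms $\cL_i\otimes_{\bX}\Delta_*\cO_{\bX}\simeq\Delta_*\cO_{\bX}\otimes_{\bX}\cL_i$. The divergence is in how you invert it.

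The paper does \emph{not} argue pointwise. Instead it uses the faithfulness hypothesis to present $\bX$ globally as $[P/\bG_m^s]$ with $P=\underline{\Spec}_X\cB$ for a $\bZ^s$-graded sheaf of algebras $\cB$ on the coarse space $X$, and then, given a family $(\cS,\phi_1,\ldots,\phi_s)$ over $T$, explicitly builds the corresponding $T$-point of $\bX$: a $\bG_m^s$-torsor $\tilde T=\underline{\Spec}_T\bigoplus_{\chi}\cN^{\otimes\chi}$ from the line bundles $\cN_i$ appearing in the $\phi_i$, together with an equivariant map $\tilde T\to P$ coming from the $\cB$-action on $\cS$. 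A key preliminary step is proving that the support of $\cS$ over $X\times T$ is the graph of a morphism $T\to X$; this uses that some summand of the generator has $\cO_{\bX}$-rank one (the extra hypothesis in the precise Theorem~5.1), so that $\cS_{0}$ is a line bundle on $T$ and $\cO_Z\hookrightarrow\cEnd_T(\cS_0)\simeq\cO_T$.

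Your strategy has a genuine gap. Representability together with bijectivity on geometric points and isomorphisms on automorphism groups does \emph{not} force a morphism of stacks to be an isomorphism: think of the normalisation of a cuspidal curve, which is a representable bijection on $k$-points with trivial stabilisers on both sides. What is missing is an infinitesimal statement---either a direct comparison of tangent/obstruction spaces showing $\Phi$ is \'etale, or (as the paper does) an explicit inverse defined on arbitrary families. Your sketch never leaves the level of closed points, so it cannot detect this. A second, smaller gap: your essential-surjectivity step assumes the skyscraper $M$ is supported at a single geometric point $x$, but this is exactly what the paper has to prove using the rank-one condition on $\cG_0$; without it there is nothing preventing a finite-length sheaf with the correct $\cG_i$-ranks from spreading over several points of the coarse space.
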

\noindent 
The simplest example of this theorem in the stacky case is where $\bX = B\mu_2$ which can be recovered as $\bM^{k_-}$.  Note that the tensor stable moduli stack here parametrises a single object consisting of an isomorphism $k_0 \oplus k_- \simeq k_- \otimes (k_0 \oplus k_-)$. Taking into account of rigidification, this boils down to an isomorphism $k_0 \simeq k_- \otimes k_-$ which has automorphism group $\mu_2$ as we show in Example~\ref{eg:BmuRecovery}. 
Returning to the general case in the theorem, the hypothesis on $\cL_1\oplus \ldots \oplus\cL_s$ implies that the stabiliser groups of $\bX$ are all abelian. Fortunately, this is good enough for applications to the weighted (a.k.a. GL) projective spaces of Herschend-Iyama-Minamoto-Opperman \cite{HIMO}, an example we look at in Section~\ref{sec:HIMO}. It also applies to the weighted projective surfaces studied in \cite{C17}. We remark that, incorporating monoidal structure in this way, also solves the aforementioned issue of connected automorphism groups in a manner reminiscent of classical Tannakian duality for finite groups.

We turn our attention now to the modular realisation problem, which was recently solved in the special case of Geigle-Lenzing \cite{GL} derived equivalences in \cite{CL} and Abdelgadir-Ueda's \cite{AU}. To this end, let $\bX$ be a DM stack that possesses a tilting bundle $\cT$. Consider the endomorphism algebra $A = \End_{\bX} \cT$ which can be written as $kQ/I$ for some quiver $Q$ and admissible ideal $I$. Our starting point is the moduli stack $\bM$ of $A$-modules of dimension vector $\vec{d}$, rigidified so as to remove the common automorphism group $\bG_m$. 

One natural approach we follow is to use the derived equivalence $F = \RHom_{\bX}(\cT,?)$ to try to transfer the tautological moduli problem to one realising $F$. This works for the tensor stable moduli stack as follows. The functors $\cL_i \otimes_{\bX}(?)$ now correspond to two-sided partial tilting complexes $L_i$ over $A$, and the only monoidal structure on $\Coh(\bX)$ we remember are the auto-functors $(?) \otimes^L_A L_i$ on $D^b(A)$. These now give partially defined self-maps on $\bM$ and $\bM^{L_1,\ldots,L_s}$ can be defined as before. This leads to Theorem~\ref{thm:stackfromquiver} which we state imprecisely as,
\begin{theorem}  \label{thm:2}
Under ampleness conditions on $\cL_i, \cT$, there is a natural isomorphism  $\bX \simeq \bM^{L_1,\ldots,L_s}$. Furthermore, the data of the universal object includes the universal $A$-module $\cT^{\vee}$, which can be used to realise the derived equivalence as a Fourier-Mukai transform. 
\end{theorem}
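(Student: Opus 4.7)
The strategy is to transport the tautological isomorphism of Theorem~\ref{thm:tautological}, namely $\bX \simeq \bM^{\cL_1, \ldots, \cL_s}$, across the tilting equivalence $F = \RHom_{\bX}(\cT, ?) \colon D^b(\bX) \xrightarrow{\sim} D^b(A)$. First I would verify that, under the ampleness hypothesis on $\cT$, the image $F(k(x))$ of any closed residue sheaf $k(x)$ lies in the heart of the standard $t$-structure on $D^b(A)$ and has a constant dimension vector $\vec{d}$. This should upgrade via flat base change to an isomorphism $F_{*} \colon \bM \xrightarrow{\sim} \bM_{A}$ between the rigidified moduli stack of skyscraper sheaves on $\bX$ and the rigidified moduli stack $\bM_A$ of $A$-modules of dimension vector $\vec{d}$.

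Next, a standard tilting-theoretic calculation shows that the auto-equivalences $\cL_{i} \otimes_{\bX} (?)$ on $D^{b}(\bX)$ are conjugated by $F$ to the auto-equivalences $(?) \otimes^{L}_{A} L_{i}$ on $D^{b}(A)$, with $L_i$ the given two-sided tilting complexes. Under the ampleness assumption on the $\cL_{i}$, both functors preserve skyscrapers, respectively the corresponding $A$-modules, on a large open locus, so the partially defined self-maps $\cL_{i} \otimes_{\bX}(?) \colon \bM \dashrightarrow \bM$ are intertwined by $F_{*}$ with the partial self-maps $(?) \otimes^{L}_{A} L_{i} \colon \bM_{A} \dashrightarrow \bM_{A}$. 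Since the tensor stable moduli construction of Definition~\ref{defn:doubletensor} depends only on these self-maps, we deduce $\bM^{\cL_{1}, \ldots, \cL_{s}} \simeq \bM^{L_{1}, \ldots, L_{s}}$; combined with Theorem~\ref{thm:tautological} this yields the natural isomorphism $\bX \simeq \bM^{L_{1}, \ldots, L_{s}}$.

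For the Fourier-Mukai statement, the universal family on $\bM \simeq \bX$ is the structure sheaf $\cO_{\Delta}$ of the diagonal, i.e.\ the universal skyscraper. Applying $F$ fibrewise along the target factor transports $\cO_{\Delta}$ to $\cT^{\vee}$ viewed as a flat family of $A$-modules over $\bX$; this is then the universal $A$-module on $\bM^{L_{1}, \ldots, L_{s}} \simeq \bX$, and tautologically it is the Fourier-Mukai kernel representing $F = \RHom_{\bX}(\cT, ?)$.

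The main obstacle is the compatibility of the partially defined self-maps with the tilting equivalence at the level of moduli stacks rather than just on derived categories: one must show the tensor stable substack of $\bM_{A}$ is recovered from that of $\bM$ via $F_{*}$ on the nose, not up to derived corrections. This hinges on precisely when objects on both sides remain in the heart under $\cL_{i} \otimes_{\bX}(?)$ or $(?) \otimes^{L}_{A} L_{i}$, which is where the ampleness hypotheses on $\cT$ and the $\cL_{i}$ must carry the argument. One also has to check that the formation of tensor stable structures commutes with base change, so that Definition~\ref{defn:doubletensor} applies to the family-level functors and the isomorphism is truly natural.
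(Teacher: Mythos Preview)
Your overall architecture---transport Theorem~\ref{thm:tautological} across the derived equivalence $F$---matches the paper's, but there is a genuine gap in step one. You assert that $F$ ``should upgrade via flat base change to an isomorphism $F_{*} \colon \bM \xrightarrow{\sim} \bM_{A}$'' between the full moduli stack of skyscraper sheaves and the full moduli stack of $A$-modules of dimension vector $\vec d$. This is false in general. The forward map $\Phi=\RHom_{\bX}(\cT,-)$ does send skyscrapers to modules (Proposition~\ref{prop:transfermap}), but the inverse $\Psi=(-)\otimes^{L}_{A}\cT$ does not send an arbitrary $A$-module of dimension vector $\vec d$ to a complex concentrated in degree~$0$. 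Already for $\bX=\bP^{1}$ with $\cT=\cO\oplus\cO(1)$ and the Kronecker quiver, the semisimple module $S_{0}\oplus S_{1}$ has dimension vector $(1,1)$ but corresponds under $\Psi$ to $\cO\oplus\cO(-1)[1]$, which is neither a sheaf nor a skyscraper. So $\bM$ and $\bM_{A}$ are simply not isomorphic before imposing tensor stability, and you cannot deduce the tensor stable isomorphism by restricting a pre-existing one.

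The paper's proof (Theorem~\ref{thm:stackfromquiver}) avoids this by working directly at the tensor stable level and using the stability data as \emph{input}. Given $(\cM,\theta_{i})\in\bM_{\vec d}^{L_{\bullet}}(T)$ with $\theta_{i}\colon \cM\otimes^{L}_{A}L_{i}\simeq \cN_{i}\otimes_{T}\cM$, one sets $\cP=\cM\otimes^{L}_{A}\cT$ and observes that $\RHom_{\bX}(\cT,\cP\otimes\cL^{\vec j})\simeq \cN^{\vec j}\otimes_{T}\cM$ is a module for every $\vec j$. Feeding this into the hypercohomology spectral sequence for $F_{n}=\Hom_{\bX}(\cT,-\otimes\cL^{n\vec i})$ and invoking the ampleness of $(\cT,\cL_{\bullet})$ forces $H^{q}(\cP)=0$ for $q\neq 0$ and shows $\cP$ is a flat family of skyscraper sheaves. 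In other words, the isomorphisms $\theta_{i}$ are precisely what allow the spectral sequence to collapse; without them you have no control over $H^{q}(\cP)$. Your ``main obstacle'' paragraph is looking in the wrong place: the difficulty is not matching domains of the partial self-maps, but showing $\Psi$ lands in $\Coh(\bX)$ at all, and that requires the tensor stability data from the outset.
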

The higher dimensional generalisation \cite{HIMO} of the Geigle-Lenzing derived equivalence can in particular, be realised using this theorem. 
We remark that, in the case where $\bX$ is a scheme, the result here is similar to Bergmann-Proudfoot's \cite{MR2421120}. It is, however, important to note that our theorem and approach use the full force and naturality of moduli theory, and do not rely on the restrictive smoothness or pointwise arguments found in \cite{MR2421120}.

We next look at the approach of {\em refined representations} as per \cite{AU}, which incorporates the monoidal structure in a rather different way. It is convenient now to restrict to the case where $\cT$ is a direct sum of line bundles $\cT_i$, but not necessarily a tilting bundle, and the dimension vector is the constant $\vec{1}$. These starting hypotheses and the general approach here have their origin in Craw-Smith's \cite{Craw-Smith} theory of multilinear series and Abdelgadir's generalisation to toric stacks \cite{Abd}. Whereas they were interested in embedding stacks in moduli spaces, we seek to recover $\bX$ as a moduli problem on ${\sf C} = A-\text{mod}$. Of course, there needs to be conditions on $\cT$, and tilting is but one we consider.
In the refined representations approach, families of $A$-modules are enhanced with refinement data $g$ (see Section~\ref{sec:refined}). 
This data is designed precisely to ensure that relations among the partial tilting bundles $\cT_i$ in $\Pic(\bX)$ are preserved by the universal bundles $\cU_i$.
Thus, the monoidal data structure we remember here includes the kernel $\Lambda_r$ of the composite $\bZ^{Q_0} \to \Pic \bM \to \Pic \bX$ which sends the $i$-th universal bundle $\cU_i$ to $\cT_i$.

More precisely, we have (ignoring rigidification issues here for clarity), that the universal bundles induce a morphism $\bM \to B(\bG_m^{Q_0})$. By restricting to $\Lambda_r \subset \bZ^{Q_0}$ we get a morphism $\bM \to B\Lambda_r^{\vee}$ where $\Lambda_r^{\vee}$ is the dual group of $\Lambda_r$. 
Taking base change with $\textup{pt} \to B\Lambda_r^{\vee}$, results in a fibre product stack $\widetilde{\bM}$ which, by formal nonsense, parametrises $A$-modules $\cM$ enhanced by the required refinement data $g$.
This enhancement of $A$-modules has the desired effect on the universal bundles and reduces the stabiliser groups of refined representations to subgroups of the dual group $(\Pic \bX)^\vee$ as opposed to the larger $\bG_m^{Q_0}$. Note that if $i,i',j,j' \in Q_0$ are such that the Peirce components $\Hom_{\bX}(\cT_i,\cT_j), \Hom_{\bX}(\cT_{i'},\cT_{j'})$ of $A$ are isomorphic we get a natural element in $\Lambda_r$, and as part of the setup, we will need to pick compatible isomorphisms between these Peirce components. The moduli stack $\bM_{\textup{ref}}$ of refined representations is the closed substack of $\widetilde{\bM}$ consisting of pairs $(\cM,g)$ compatible with these isomorphisms (see Definition~\ref{def:refined}).

We now paraphrase Corollary~\ref{cor:stackfromCox}. We will need to assume that $\bX$ is a Mori dream stack so has a finitely generated Cox ring $R$. The key hypothesis is that $\cT$ {\em captures the Cox ring} in the sense of Definition~\ref{def:captureCox}. Loosely speaking, this condition can be understood as follows. The Peirce components of $A$ are of the form $\Hom(\cT_i,\cT_j) \simeq H^0(\cT_j \otimes \cT_i^{-1})$ which is one of the graded components of $R$. Similarly, some of the multiplication of $R$ is reflected in the multiplication in $A$. If $A$ contains enough information to recover the generators and relations in $R$, then we say $\cT$ captures the Cox ring. 
\begin{theorem}  \label{thm:3}
Suppose $\bX$ is a Mori dream stack and that $\cT$ captures the Cox ring of $\bX$. 
Then there is an open subset $\bM_{\textup{ref}}^\circ$ of $\bM_{\textup{ref}}$ defined by a GIT stability parameter which is isomorphic to $\bX$.
\end{theorem}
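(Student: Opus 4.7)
The plan is to exploit the Mori dream presentation $\bX \simeq [V^{ss(\theta)}/(\Pic \bX)^{\vee}]$, where $V = \Spec \mathrm{Cox}(\bX)$ carries the natural action of $(\Pic \bX)^{\vee}$ and $\theta$ is a stability character in the Mori chamber cut out by the choice of $\bX$. Dualising the exact sequence $0 \to \Lambda_r \to \bZ^{Q_0} \to \Pic \bX \to 0$ gives $1 \to (\Pic \bX)^{\vee} \to \bG_m^{Q_0} \to \Lambda_r^{\vee} \to 1$, so passing from $\bM$ to $\widetilde{\bM}$ (and then to $\bM_{\textup{ref}}$) is exactly the operation that cuts down the residual $\bG_m^{Q_0}$-stabilisers of $\vec{1}$-dimensional representations to the $(\Pic \bX)^{\vee}$-stabilisers appropriate for a Cox-type quotient.

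First I would construct a morphism $\bX \to \bM_{\textup{ref}}$ from the tautological family: the direct summands $\cT_i$ of $\cT$ give a universal $\vec{1}$-dimensional $A$-module over $\bX$, and the isomorphisms between Peirce components of $A$ coming from multiplication in $\mathrm{Cox}(\bX)$ supply exactly the refinement data $g$ demanded by Definition~\ref{def:refined}. This upgrades the tautological family on $\bX$ to a refined family and hence to a map $\bX \to \bM_{\textup{ref}}$.

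Second I would go in the opposite direction using the hypothesis that $\cT$ captures the Cox ring. Over the prequotient space $W$ parametrising pairs $(\cM,g)$ with $\cM$ a $\vec{1}$-dimensional representation of $Q$ and $g$ a refinement, a refined representation is determined by the values of the arrows subject to the relations from $I$ together with the refinement compatibilities; the capture hypothesis is precisely what ensures these relations generate the Cox ideal, producing a $(\Pic \bX)^{\vee}$-equivariant isomorphism $W \simeq V$. Taking stacky quotients by the relevant groups then yields $\bM_{\textup{ref}} \simeq [V/(\Pic \bX)^{\vee}]$.

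Finally I would transport the stability parameter $\theta$ from the Mori chamber to a GIT parameter for $(\Pic \bX)^{\vee}$ acting on $W \simeq V$, defining $\bM_{\textup{ref}}^{\circ}$ as the $\theta$-semistable locus; the chain of isomorphisms $\bM_{\textup{ref}}^{\circ} \simeq [V^{ss(\theta)}/(\Pic \bX)^{\vee}] \simeq \bX$ then gives the result. The main obstacle is the second step: making the identification $W \simeq V$ rigorous requires carefully unpacking Definition~\ref{def:captureCox} to see that the quiver relations in $I$, combined with the compatibilities enforced by $g$, generate exactly the kernel of the presentation $k[\text{arrows}] \twoheadrightarrow \mathrm{Cox}(\bX)$. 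A secondary subtlety, which should follow from numerical matching once the first is settled, is that the Mori chamber for $\bX$ really corresponds to the GIT chamber defining $\bM_{\textup{ref}}^{\circ}$ and not some birational flip.
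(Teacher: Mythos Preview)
Your proposal is correct and follows essentially the same route as the paper. The paper also presents $\bM_{\textup{ref}}$ as a global quotient, uses the capture hypothesis to match the prequotient with $\Spec R$, and then pulls back the Mori-dream GIT parameter $\theta$ to conclude $\bX \simeq \bM_{\textup{ref}}^{\theta}$.

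Two small remarks. First, the relevant gauge group is $\PGL(\vec{1}) = \Lambda_Q^{\vee}$ with $\Lambda_Q = \bZ^{Q_0\setminus\{0\}}$, not $\bG_m^{Q_0}$, since $\bM$ is already rigidified; the paper then shows $[\cR_A/\PGL(\vec{1})] \simeq [\Spec R/(\Pic\bX)^{\vee}]$, which is what you want. Second, for the step you flag as the main obstacle, the paper's device is \emph{homogenisation}: a $\Pic(\bX)$-homogeneous generator $s = \sum s_j$ of $I_{\textup{de}} = \ker h$ is replaced by $s^h = \sum s_j \otimes \kappa_j \in S \otimes k\Lambda_r$ so that $s^h$ becomes $\Lambda_Q$-homogeneous; one then checks directly that for generators of the form $y - \gamma(y)$ the vanishing of $s^h$ on a pair $(\cM,g)$ is exactly the commutativity of the refinement square in Definition~\ref{def:refined}. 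This makes the identification $\cR_A = V$ (your $W \simeq V$) precise and simultaneously disposes of your secondary subtlety: once the prequotients are equivariantly identified, the GIT chambers coincide on the nose.
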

The theorem applies in the case of GL projective spaces and encapsulates when the ad hoc argument in \cite{AU} works. Checking the hypothesis that $\cT$ captures the Cox ring is usually easy for concrete examples, but not necessarily in the abstract. What is striking is how different this approach to stack recovery is to the one using tensor stable moduli stacks. Even the hypotheses for when they work are very different. This raises a host of interesting questions like, is there a relationship between the way the two approaches incorporate the monoidal structure?

We provide an answer when $\cT$ is both a tilting bundle and a direct sum of line bundles. We pick the $L_i$ to correspond to the $\cT_i$. We remark first that the tensor stable moduli stack parametrises $A$-modules $\cM$ enhanced with {\em tensor stability data} which we will denote by $\psi$.
\begin{theorem}  \label{thm:samemoduli}
With the above hypotheses, there is an isomorphism of $\bM^{L_1,\ldots,L_s}$ with an open subset of $\bM_{\textup{ref}}$ and moreover, the inverse isomorphisms can be defined explicitly by a procedure which tells you how to go back and forth between tensor stability data $\psi$ and refinement data $g$.
\end{theorem}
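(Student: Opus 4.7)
The plan is to construct, for each test scheme $S$, an explicit natural equivalence between the $S$-points of $\bM^{L_1,\ldots,L_s}$ and those of an open substack $\bM_{\textup{ref}}^\circ \subset \bM_{\textup{ref}}$. Both sides parametrise an $A$-module $\cM$ of dimension vector $\vec{1}$ together with extra monoidal data: tensor stability data $\psi$ on one side, refinement data $g$ on the other. The task is therefore to translate between these enhancements, after which naturality in $S$ will automatically upgrade the groupoid bijection to an isomorphism of stacks.

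First I would unpack what each piece of data really is. Under the tilting equivalence $F = \RHom_{\bX}(\cT, ?)$, the self-equivalences $\cL_i \otimes_{\bX}(?)$ correspond to $(?) \otimes^L_A L_i$, so tensor stability data amounts to a commuting system of isomorphisms $\psi_i \colon \cM \xrightarrow{\sim} \cM \otimes^L_A L_i$. On the other hand, since $\cT = \bigoplus \cT_i$ is a direct sum of line bundles, each relation $\sum n_j e_j \in \Lambda_r$ encodes a canonical isomorphism between tensor products of the $\cT_j$'s in $\Pic \bX$, which through the Peirce decomposition of $A$ induces a fixed identification between the corresponding tensor products of Peirce components $\Hom_{\bX}(\cT_i,\cT_j)$. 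Refinement data $g$ is precisely the scalar trivialisation on the universal bundles $\cU_j$ that realises these preselected identifications universally.

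With this dictionary in hand, the bijection can be built explicitly. Given $(\cM,\psi)$, one composes the $\psi_i$'s along each relation in $\Lambda_r$: a chain of twists returning $\cM$ to itself yields a scalar in $\bG_m$ (using dimension vector $\vec{1}$), and this scalar is the trivialisation demanded by $g$. Conversely, given $(\cM,g)$, the refinement data supplies the scalar identifications on the Peirce components that assemble into each $\psi_i$, since $L_i$ itself is encoded by those Peirce components via tilting. The open subset $\bM_{\textup{ref}}^\circ$ is cut out by requiring the Peirce components of $\cM$ involved in the $\psi_i$'s to be nonzero; equivalently, it is the locus where the rational self-maps $\cL_i \otimes_{\bX}(?)$ are regular so that tensor stability data can exist at all.

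The principal obstacle, and essentially the only substantive content, is checking that the compatibility conditions on the two sides match. On the $\psi$-side these are commutativity constraints encoding $\cL_i \otimes \cL_j \simeq \cL_j \otimes \cL_i$ together with the cocycle identities under iterated tensoring; on the $g$-side they are compatibility with the Peirce isomorphisms preselected to implement each relation in $\Lambda_r$. The bookkeeping amounts to tracking how a generating set of $\Lambda_r$ interacts with the quiver generators of $A$ and the chosen Peirce identifications, and then verifying that the two cocycle/compatibility conditions are transported to one another under the dictionary. Once this is pinned down, the two constructions are visibly mutually inverse, naturality in $S$ is formal, and Theorem~\ref{thm:samemoduli} follows.
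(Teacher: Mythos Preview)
Your outline has the right overall shape --- build maps in both directions by translating between $\psi$ and $g$, then check they are mutually inverse --- but two of the key technical steps are either missing or incorrect as stated, and without them the argument does not go through.

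First, the tensor stability data is not $\psi_i \colon \cM \xrightarrow{\sim} \cM \otimes_A^L L_i$; in families it is $\psi_i \colon \cM \otimes_A^L L_i \xrightarrow{\sim} \cN_i \otimes_S \cM$ for a line bundle $\cN_i$ on $S$ (this twist is forced by rigidification). Consequently, ``composing the $\psi_i$'s along a relation $\kappa \in \Lambda_r$'' produces an isomorphism $\cM \otimes_A^L L_\kappa \simeq \cN_\kappa \otimes_S \cM$, and after invoking a fixed identification $L_\kappa \simeq A$ you obtain a trivialisation of $\cN_\kappa$, not of $\cM_\kappa$. But refinement data $g_\kappa$ is, by definition, a trivialisation of $\cM_\kappa = \bigotimes_i \cM_i^{a_i}$. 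The bridge you are missing is the identification $\cN_i \simeq \cM_i$: the paper extracts this from the observation that $(\cM \otimes_A^L L_i)_0 = \cM_i$ (because $L_i$ corresponds to $\cT_i^{\vee}$ and $\cT_0 = \cO_{\bX}$), so the $0$-component of $\psi_i$ reads $\cM_i \xrightarrow{\sim} \cN_i$. Without this step your ``scalar in $\bG_m$'' has no reason to land in the correct line bundle. The paper moreover routes the construction through the determinant of the projective resolution (its functor $\cM_?^{\textup{can}}$) precisely to make the resulting $g_\kappa$ independent of the scalar ambiguity in the $\psi_i$; your sketch does not address scale-independence at all.

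Second, your description of the open locus $\bM_{\textup{ref}}^\circ$ as the set where ``the Peirce components of $\cM$ involved in the $\psi_i$'s are nonzero'' is not the right condition. The locus in question is where each $\cM \otimes_A^L L_i$ is concentrated in cohomological degree zero with dimension vector $\vec{1}$; this is a condition on the entire projective resolution of $L_i$ applied to $\cM$, not a nonvanishing of individual entries. This matters equally for the reverse direction: to assemble $\psi_i$ from $g$, the paper expresses $(\cM \otimes_A^L L_i)_j$ as a complex in the $\cM_k$'s via the equivalence $D^b(A) \to K^b(P_k)$, uses the open condition to identify it canonically with its determinant $\cM_\chi$ for some $\chi$ with $\chi - (\chi_i + \chi_j) \in \Lambda_r$, and only then applies $g_{\chi - (\chi_i + \chi_j)}$ componentwise; compatibility with the $A$-module structure is exactly what the commutative squares in the definition of refined representations enforce. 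Your sentence ``refinement data supplies the scalar identifications on the Peirce components that assemble into each $\psi_i$'' elides this entire mechanism.
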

This relates the approaches developed in \cite{AU} and \cite{CL}. There are many reasons why this theorem is interesting. Firstly, it is not clear if the hypotheses of Theorem~\ref{thm:2} imply those of Theorem~\ref{thm:3}, so we now know more instances where refined representations give modular realisations of derived equivalences. Secondly, one advantage the moduli of refined representations has over the tensor stable moduli stack, is that the former is naturally a quotient stack and can be studied via GIT whereas the latter is not obviously so. The relationship between tensor stability and refinement data may suggest a way to apply GIT to tensor stable moduli stacks. Finally, we do expect the open subset $\bM_\text{ref}^\circ$ referred to in Theorem~\ref{thm:samemoduli} to be defined by a GIT stability parameter $\theta$.

We end with some concluding remarks about the modular realisation of derived equivalences problem. The representation theory of a finite dimensional algebra $A$ is extremely subtle as, for example, its complexity varies chaotically as you vary generators and relations for $A$. The beauty of derived equivalences is that they now can be used to explain why some algebras have nice representation theory. Indeed, Geigle-Lenzing formulated their derived equivalence to explain Ringel's study of the representation theory of canonical algebras. The natural question is thus, given $A$, how might you pick a DM stack $\bX$ which is derived equivalent to it. The old approach is to guess $\bX$ (perhaps using knowledge of $K_0(A)$) and a tilting bundle on it. Solutions to the modular realisation problem provide a much more elegant approach, since they are essentially machines for churning out candidates for both $\bX$ and the tilting bundle. There is still some guesswork in the choice of $\vec{d}, L_i$ or $\Lambda_r$ etc. Regardless, the machine will always produce a universal $A$-module, and hence adjoint functors relating $A$ to a moduli stack. Even if these functors are not derived equivalences, they may retain enough information to be useful, as happens in the case of point schemes for Sklyanin algebras.  

\subsubsection*{Notation and conventions}
We work over an algebraically closed field of characteristic 0.
For a finite set $S$ we will use $\bZ^S$ to denote the free abelian group generated by $S$ and $\chi_s \in \bZ^S$ to denote the generator corresponding to $s \in S$.

\subsection*{Acknowledgements}
We would like to thank Shinnosuke Okawa and \v{S}pela \v{S}penko for helpful discussions.
We are also grateful to Michael Wemyss for his question about the relation of this work to that of Bergmann-Proudfoot \cite{MR2421120}.
This work was done under the funding of the Australian Research Council's Discovery Projects grant number DP130100513.

\section{Background: quasi-projective stacks}

In this paper, we will mainly be dealing with quasi-projective stacks $\bX$ as defined by Kresch \cite{Kr}. We record here his definition as well as some basic facts about such stacks.

Let $\bX$ be a Deligne-Mumford stack of finite type. We will also assume that $\bX$ is separated, or more generally, has {\em finite inertia}, which just means that the inertia stack $$\mathbb{I}(\bX) :=\bX \times_{\Delta,\bX \times \bX , \Delta} \bX$$ 
is finite over $\bX$. We know from \cite{KeM} that there is a coarse moduli space $X$ and we let $c\colon \bX \to X$ denote that canonical quotient morphism. Furthermore, \'etale locally on $X$, $\bX$ is isomorphic to a quotient stack of the form $[U/G]$ where $G$ is a finite group.

Our characteristic 0 assumption ensures that $\bX$ is a tame stack in the sense of \cite{AOV} so in particular, in the terminology of \cite[Definition~3.1]{Alp}, the morphism $c \colon \bX \to X$ is {\em cohomologically affine} in the sense that $c$ is quasi-compact and $c_* \colon \Qcoh(\bX) \to \Qcoh (X)$ is exact.  From \cite[Proposition~3.10]{Alp}, we know that cohomologically affine morphisms are stable under compositions and base change if the bases are Deligne-Mumford stacks. 

We recall Alper's projection formula \cite[Proposition~4.5]{Alp}.
\begin{proposition}   \label{prop:projection}
Let $f \colon \bY \to Y$ be a cohomologically affine morphism of Artin stacks where $Y$ is an algebraic space. Then the natural morphism below is an isomorphism for quasi-coherent sheaves $\cF, \cF'$ on $\bY, Y$, respectively.
$$ f_* \cF \otimes_{Y} \cF' \to f_* (\cF \otimes_{\bY} f^* \cF').$$
\end{proposition}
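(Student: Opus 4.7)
The plan is first to reduce to the case where $Y$ is affine. The statement is local on $Y$ in the étale topology, and both $f_*\cF \otimes_Y \cF'$ and $f_*(\cF \otimes_{\bY} f^*\cF')$ behave well with respect to étale base change on $Y$ (using that cohomologically affine morphisms are stable under base change by DM stacks, as recorded just before the statement). Since $Y$ is an algebraic space, I may therefore assume $Y = \Spec R$ is the spectrum of a ring, so that $\cF'$ corresponds to an $R$-module and I can argue by choosing presentations.

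Next, I would observe that both sides of the natural morphism are right-exact, cocontinuous functors in the variable $\cF'$. The source $f_*\cF \otimes_Y (-)$ is right exact and commutes with arbitrary direct sums as an elementary property of tensor product. For the target, note that $f^*$ is a left adjoint, hence right exact and cocontinuous; tensor product with $\cF$ over $\bY$ is right exact and cocontinuous; and $f_*$ is exact by the cohomological affineness hypothesis, and commutes with arbitrary direct sums of quasi-coherent sheaves since $f$ is quasi-compact (and quasi-separated, which is part of the standing Artin stack hypotheses).

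Once both functors are shown to be right exact and cocontinuous, I would take a free presentation $\cO_Y^{(J)} \to \cO_Y^{(I)} \to \cF' \to 0$ and apply the five lemma to reduce the statement to the case $\cF' = \cO_Y^{(I)}$. Commuting the direct sums past both functors then reduces further to the case $\cF' = \cO_Y$, where the natural map is manifestly the identity $f_*\cF \to f_*(\cF \otimes_{\bY} \cO_{\bY}) = f_*\cF$, and there is nothing to prove.

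The main obstacle is the verification that $f_*$ commutes with arbitrary direct sums of quasi-coherent sheaves. This is the only place where the cohomological affineness hypothesis, beyond mere exactness of $f_*$, really interacts with the colimit structure; the remainder of the argument is purely formal. Once this cocontinuity is in hand, everything else reduces to the trivial case $\cF' = \cO_Y$ via the five lemma.
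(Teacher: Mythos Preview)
The paper does not actually prove this proposition: it is quoted verbatim from Alper's \emph{Good moduli spaces for Artin stacks} as \cite[Proposition~4.5]{Alp}, with no argument supplied. So there is no ``paper's own proof'' to compare against.

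Your argument is correct and is essentially the standard one (and indeed is the argument Alper gives). The reduction to affine $Y$, followed by the observation that both sides are right-exact cocontinuous functors of $\cF'$, and then the free-presentation/five-lemma reduction to $\cF' = \cO_Y$, is exactly how such projection formulas are proved. Your identification of the one nontrivial point---that $f_*$ commutes with arbitrary direct sums---is accurate; this follows because $f$ is quasi-compact and quasi-separated (so $f_*$ preserves filtered colimits of quasi-coherent sheaves), and an arbitrary direct sum is the filtered colimit of its finite sub-sums, which $f_*$ preserves by additivity. Once that is in hand, the rest is formal, as you say.
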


Following \cite{OS} we define
\begin{definition}
A coherent locally free sheaf $\cG$ on $\bX$ is a {\em generating sheaf} if the natural morphism
$$ c^*(c_*\cHom_{\bX}(\cG, \cF)) \otimes_{\bX} \cF \to \cF$$
is surjective for every quasi-coherent sheaf $\cF$ on $\bX$. 
\end{definition}
By \cite[Theorem~5.2]{OS}, this condition can be checked geometrically pointwise as follows. Given any geometric point $\xi \colon \operatorname{Spec} k \to \bX$, we consider the (geometric) stabiliser group $G_{\xi} := \operatorname{Spec} k \times_{\xi, \bX} \mathbb{I}(\bX)$ which is a finite group. Then a locally free sheaf $\cG$ generates if and only if the $G_{\xi}$-module $\xi^* \cG$ generates $\textup{Mod-} kG_{\xi}$ for every geometric point $\xi$.

The importance of this concept for us, is that it allows us to relate the theory of stacks to non-commutative algebraic geometry. 
\begin{proposition} \label{prop:Morita}
Fix a generating sheaf $\cG \in \Qcoh(\bX)$ and define $\cA := c_* \cEnd (\cG)$ then 
\begin{align*}
\Phi \colon \Qcoh (\bX) &\longrightarrow \textup{Mod-}\cA \\
 \cF & \longmapsto c_* \cHom(\cG, \cF)
\end{align*}
is an equivalence of categories.
\end{proposition}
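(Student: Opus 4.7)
The plan is to construct an explicit quasi-inverse $\Psi$ to $\Phi$ by mirroring classical Morita theory, then verify that the unit and counit of the resulting adjunction are both isomorphisms. Define
$$\Psi \colon \textup{Mod-}\cA \longrightarrow \Qcoh(\bX), \qquad \Psi(\cM) := c^*\cM \otimes_{c^*\cA} \cG,$$
where $\cG$ is a left $c^*\cA$-module via the adjunction counit $c^*\cA = c^*c_*\cEnd(\cG) \to \cEnd(\cG)$. The adjunction $\Psi \dashv \Phi$ follows from the standard chain
$$\Hom_{\bX}(c^*\cM \otimes_{c^*\cA}\cG, \cF) \cong \Hom_{c^*\cA}(c^*\cM, \cHom_{\bX}(\cG, \cF)) \cong \Hom_{\cA}(\cM, \Phi(\cF)),$$
combining tensor-hom with the $(c^*, c_*)$ adjunction for module categories.

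For the unit $\eta \colon \textup{id} \to \Phi\Psi$, the key observation is that $\Psi(\cA) = \cG$ and $\Phi(\cG) = c_*\cEnd(\cG) = \cA$, making $\eta_\cA$ an isomorphism. Now $\Psi$ is right exact as a left adjoint; $\Phi$ is exact because $\cG$ is locally free and $c$ is cohomologically affine; and both functors commute with arbitrary direct sums (since $\cG$ is coherent and $c_*$ preserves filtered colimits on our quasi-compact, quasi-separated morphism). For any presentation $\cA^{\oplus J} \to \cA^{\oplus I} \to \cM \to 0$, naturality of $\eta$ and the five lemma extend the isomorphism from $\cA$ to arbitrary $\cM$.

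For the counit $\epsilon \colon \Psi\Phi \to \textup{id}$, surjectivity of $\epsilon_\cF$ is precisely the generating sheaf hypothesis applied to $\cF$. Set $\cK := \ker(\epsilon_\cF)$ and apply the exact functor $\Phi$. The triangle identity $\Phi(\epsilon_\cF) \circ \eta_{\Phi(\cF)} = \textup{id}_{\Phi(\cF)}$, together with the previous step ensuring $\eta_{\Phi(\cF)}$ is an iso, forces $\Phi(\epsilon_\cF)$ to be an iso, so $\Phi(\cK) = c_*\cHom_{\bX}(\cG, \cK) = 0$. Feeding $\cK$ back into the generating sheaf condition yields a surjection $c^*c_*\cHom_{\bX}(\cG, \cK) \otimes_{\bX}\cG = 0 \twoheadrightarrow \cK$, whence $\cK = 0$.

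The main obstacle is bookkeeping rather than mathematics: one must carefully identify the module structures over $\cA$, $c^*\cA$, and $\cEnd(\cG)$ consistently, and verify that the tensor product $c^*\cM \otimes_{c^*\cA} \cG$ interacts correctly with both $(c^*, c_*)$ and tensor-hom adjunctions — in particular, Alper's projection formula (Proposition~\ref{prop:projection}) lurks in the background when checking that $\Phi$ commutes with direct sums of copies of $\cA$. Once this formal groundwork is laid, the substantive input, the generating sheaf condition, enters exactly twice: once to produce surjectivity of $\epsilon_\cF$, and once to reflect the vanishing of $\Phi(\cK)$ back to the vanishing of $\cK$.
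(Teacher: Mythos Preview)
Your proof is essentially correct and uses the same quasi-inverse $\Psi(\cM) = c^*\cM \otimes_{c^*\cA} \cG$ as the paper, but the two halves of the argument are handled somewhat differently.

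For the counit $\epsilon \colon \Psi\Phi \to \mathrm{id}$, the paper instead uses the generating sheaf property to present an arbitrary $\cF$ as a cokernel $c^*\cV_1 \otimes_{\bX}\cG \to c^*\cV_2 \otimes_{\bX}\cG \to \cF \to 0$, and then verifies $\epsilon$ is an isomorphism on objects of the shape $c^*\cV \otimes_{\bX}\cG$ via a direct computation with the projection formula. Your kernel argument --- surjectivity of $\epsilon_{\cF}$ from the generating condition, then $\Phi(\cK)=0$ via the triangle identity, then $\cK=0$ by feeding $\cK$ back into the generating condition --- is genuinely different and arguably cleaner: it uses the generating hypothesis in its two natural roles (producing epimorphisms, and making $\Phi$ conservative) and never unpacks the projection formula.

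There is, however, one small gap in your treatment of the unit. You invoke a global presentation $\cA^{\oplus J} \to \cA^{\oplus I} \to \cM \to 0$, but in $\textup{Mod-}\cA$ over a non-affine $X$ such presentations need not exist (already $\cO_{\bP^1}(-1)$ has no nonzero global sections). The paper fixes this by observing that the unit can be checked Zariski-locally on the coarse moduli space $X$, where affine localization does guarantee free presentations; adding that one sentence makes your argument complete.
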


\begin{proof}
First note that $\Phi$ is exact and that $\cG$ is locally free. Moreover, $\Phi$ admits a left adjoint:
\begin{align*}
\Psi \colon \textup{Mod-}\cA &\longrightarrow \Qcoh (\bX) \\
 M & \longmapsto c^*M \otimes_{c^* \cA} \, \cG.
\end{align*}
We begin by showing that the composite $\Phi \circ \Psi$ is isomorphic to the identity. 
Since $\cG$ is a generating sheaf, we may present a general $\cF \in \Qcoh(\bX)$ as follows: $$c^* \cV_1 \otimes_\bX \cG \longrightarrow c^* \cV_2 \otimes_\bX \cG \longrightarrow \cF$$ where $\cV_1, \cV_2 \in \Qcoh(X)$ are locally free. 
Hence it suffices to show that the adjuction morphism $\Phi \circ \Psi (c^* \cV \otimes_\bX \cG) \longrightarrow c^* \cV \otimes_\bX \cG$ is in fact an isomorphism.
This then follows from the following chain of isomorphisms:
\begin{align*}
\Phi \circ \Psi (c^* \cV \otimes_\bX \cG) &\cong c^*(c_* \cHom(\cG, c^* \cV \otimes_\bX \cG) \otimes_{c^* \cA} \, \cG \\
& \cong c^*(c_* (c^* \cV \otimes_\bX \cEnd(\cG))) \otimes_{c^* \cA} \, \cG \\
& \cong c^* (\cV \otimes_X \cA) \otimes_{c^* \cA} \, \cG \\
& \cong c^*\cV \otimes_\bX \cG.
\end{align*}

It remains to show that $\Psi \circ \Phi$ is isomorphic to the identity.
For $M \in \textup{Mod-}\cA$, one may show that the adjunction morphism $M \rightarrow \Psi \circ \Phi(M)$ is an isomorphism it suffices by checking it locally on the coarse moduli space. 
Locally over the coarse moduli space we may freely present $M \in \textup{Mod-}\cA$ as follows: $$\cA^m \longrightarrow \cA^n \longrightarrow M \longrightarrow 0.$$ 
The result then follows from the observation that $A \longrightarrow \Psi \circ \Phi (\cA)$ is an isomorphism.
\end{proof}

\begin{remark}  \label{rem:Morita}
Many results about quasi-coherent sheaves on stacks can be reduced corresponding results on schemes using Proposition~\ref{prop:Morita}. This seems to be a relatively easy way to check results concerning stacks for those less familiar with the theory. We will use this a number of times.
\end{remark}

\begin{definition}
We say $\bX$ is {\em (quasi-)projective} if it has a generating sheaf and the coarse moduli space $X$ is a (quasi-)projective scheme.
\end{definition}

Examples of projective stacks include weighted projective lines as introduced by Geigle-Lenzing \cite{GL} and the class of stacks introduced in Herschend-Iyama-Minamoto-Opperman \cite{HIMO}.
The authors of \cite{HIMO} chose to name them Geigle-Lenzing projective spaces in honour of \cite{GL}.

\section{Skyscraper sheaves on stacks}
\label{sec:sky}

In this section, we introduce the notion of a skyscraper sheaves on a quasi-projective stack $\bX$ and their moduli. By definition, the coarse moduli space $X$ is a quasi-projective scheme and there exists a generating sheaf $\cG$ which is far from being unique. Let $c \colon \bX \rightarrow X$ be the canonical morphism to the coarse moduli scheme and suppose there is a fixed decomposition $\cG = \oplus_i \cG_i$. Let $\cA := c_* \cEnd_{\bX} \cG$ which is a finite sheaf of algebras on $X$.

Let $\widetilde{\bM}_{\Coh}$ be the moduli stack of coherent sheaves on $\bX$. Recall that $\Coh(\bX)$ is a $k$-linear category, so the inertia groups of every object contain a copy of $\mathbb{G}_m$. We remove this common copy of $\mathbb{G}_m$ by {\em rigidification} as defined for example in \cite[Section~5]{ACV} (see also \cite[Section~2.3]{CL} for a gentle description). Let $\bM_{\Coh}$ denote the resulting rigidified moduli stack of coherent sheaves on $\bX$. It can be described as the stackification of the following pre-stack $\bM^{pre}$. Given a test scheme $T$, the objects of $\bM^{pre}(T)$ are those of $\tilde{\bM}(T)$. Given objects $\cM, \cN \in \bM^{pre}(T)$, the isomorphisms from $\cM$ to $\cN$ consist of equivalence classes of isomorphisms $\phi \colon \cM \to \cM' \otimes_T \cN$ where $\cN$ is a line bundle on $T$ and $\phi' \colon \cM \to \cM' \otimes_T \cN'$ is {\em equivalent} to $\phi$ if there is some isomorphism $l \colon \cN \to \cN'$ with $\phi' = (\id \otimes l) \phi$. The pullback pseudo-functor is that induced from $\widetilde{\bM}_{\Coh}$. 

We will always rigidify our moduli stacks in this way so in general will omit the adjective ``rigidified''. We next define the (rigidified) moduli stack $\bM_{\textup{Fin}}$ of finite length sheaves on $\bX$. The objects over a test scheme $T$ consist of coherent sheaves $\cF \in \Qcoh(\bX \times T)$ which are flat over $T$ and such that the support $Z \subseteq X \times T$ of $\Phi_{\cG}(\cF):=(c\times \id_T)_*\cHom_{\bX}(\cG, \cF)$ is finite over $T$. Since flatness is a property of a Grothendieck category (see \cite{AZ01}), this is equivalent by Proposition~\ref{prop:Morita}, to a flat family of coherent $\cA$-modules with finite support. The morphisms in $\bM_{\textup{Fin}}$ are defined to be the same as for the moduli stack of coherent sheaves on $\bX$. We note that the definition is independent of the choice of generator, since Morita equivalences of finite sheaves of algebras over $X$ preserve support on $X$.

\begin{proposition}  \label{prop:MFinisartin}
The moduli stack $\bM_\textup{Fin}$ is an Artin stack.
\end{proposition}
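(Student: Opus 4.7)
The plan is to reduce, via the Morita equivalence of Proposition~\ref{prop:Morita}, to a classical moduli problem on the quasi-projective scheme $X$, where the Artin property is standard.

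First, I would upgrade Proposition~\ref{prop:Morita} to the relative setting. For a test scheme $T$, write $c_T = c\times\id_T$ and $\cA_T := c_{T,*}\cEnd(\cG_T)$, where $\cG_T$ denotes the pullback of $\cG$ to $\bX\times T$. Since $c$ is cohomologically affine and this property is stable under base change along the Deligne--Mumford stack $T$, the pushforward $c_{T,*}$ is exact and Proposition~\ref{prop:projection} applies. Because $\cG$ is locally free, $\cHom(\cG,-) = \cG^{\vee}\otimes_\bX(-)$ preserves $T$-flatness; combined with exactness of $c_{T,*}$, this shows that $\Phi_{\cG_T}$ carries $T$-flat coherent sheaves on $\bX\times T$ to $T$-flat coherent right $\cA_T$-modules on $X\times T$, and the inverse $\Psi$ does the reverse. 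The finite-support condition over $T$ is intrinsic to the target of $\Phi_\cG$, so matches on both sides. As a consequence $\bM_\textup{Fin}$ is identified, compatibly with rigidification, with the moduli stack $\bN$ of $T$-flat coherent right $\cA$-modules on $X$ whose support is finite over $T$.

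Second, I would show $\bN$ is Artin by placing it over the moduli stack $\bN'$ of $T$-flat coherent $\cO_X$-modules with support finite over $T$. The forgetful morphism $\bN\to\bN'$ is representable and affine: its fibre over a family $\cM$ classifies $\cA$-actions $\cA\otimes\cM\to\cM$ satisfying unit and associativity, which cut out a closed subscheme of the linear Hom-scheme $\underline{\Hom}_T(\cA\otimes\cM,\cM)$; the latter is representable since $\cM$ (hence $\cA\otimes\cM$) has finite support over $T$. It is standard moduli theory (e.g.\ Laumon--Moret-Bailly) that the stack of coherent sheaves on a quasi-projective scheme is Artin locally of finite type, and stratifying by total length shows that the finite-support condition cuts out an open substack, so $\bN'$ is Artin. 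Pullback of an Artin stack along an affine representable morphism remains Artin, so $\bN$ is Artin. Finally, rigidification by the common $\bG_m$ preserves the Artin property, as recalled in \cite{ACV}.

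The main obstacle is the categorical bookkeeping in the relative Morita step: one must verify that $\Phi_{\cG}$ and $\Psi$ each preserve $T$-flatness (where cohomological affineness and Alper's projection formula do the real work) and that the finite-support condition is transported correctly. Once this is in place, the rest is essentially an invocation of classical moduli theory of coherent sheaves on schemes, together with the easy representability of the functor classifying $\cA$-actions on a family.
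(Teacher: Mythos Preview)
Your argument is correct, and it shares with the paper the essential first move: use the Morita equivalence of Proposition~\ref{prop:Morita} to replace sheaves on $\bX$ by $\cA$-modules on the coarse scheme $X$. From there the two proofs diverge in execution. The paper stays with $\cA$-modules throughout: in the projective case it simply observes that $\bM_{\textup{Fin}}$ is the union of those components of the moduli stack of coherent $\cA$-modules with constant Hilbert polynomial, and in the quasi-projective case it chooses a projective closure $\bar X$, extends $\cA$ to a coherent sheaf of algebras $\bar{\cA}$, and exhibits $\bM_{\textup{Fin}}$ as an open substack of the moduli of finite-length $\bar{\cA}$-modules. You instead peel off the $\cA$-action via a forgetful morphism $\bN\to\bN'$ to finite-length $\cO_X$-modules, check this morphism is affine and representable, and then invoke standard results for $\bN'$.

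Each route has its merits. Yours reduces to the most classical moduli problem possible (sheaves on a scheme with no auxiliary algebra), at the cost of an extra representability check for the $\cA$-action functor; note that since $\cA$ need not be locally free on $X$, the cleanest way to see that $\underline{\Hom}_T(\cA\otimes\cM,\cM)$ is affine over $T$ is to use that $\cM$ has support finite over $T$, push everything down to $T$, and cut the $\cA$-linearity conditions out of the honest vector bundle $\underline{\Hom}_{\cO_T}(p_*\cM,p_*\cM)$. The paper's route is shorter but tacitly assumes the reader accepts that moduli of coherent modules over a coherent sheaf of algebras on a projective scheme is Artin. Your appeal to ``standard moduli theory'' for $\bN'$ in the quasi-projective case still hides the same compactification step the paper spells out, so neither argument really avoids it. Finally, the flatness-preservation you need for the relative Morita step is exactly what the paper proves (just after this proposition) in Proposition~\ref{prop:cstarflat}; you have correctly anticipated it.
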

\begin{proof}
Suppose first that $\bX$ and hence $X$ are projective so that we may speak of Hilbert polynomials of coherent $\cA$-modules with respect to some fixed choice of an ample line bundle. We may think of $\bM_{\Coh}$ as the moduli stack of coherent $\cA$-modules and then $\bM_{\textup{Fin}}$ consists of those components of $\bM_{\Coh}$ whose Hilbert polynomials are bounded. 

For quasi-projective $X$, we may pick a projective closure $\bar{X}$ of $X$ and extend $\cA$ to a coherent sheaf of algebras $\bar{\cA}$ on $\bar{X}$. It suffices then to show that $\bM := \bM_{\textup{Fin}}$ is an open substack of the moduli stack $\bar{\bM}$ of finite length $\bar{\cA}$-modules. 
Let $\cF$ be a flat family of $\cA$-modules over $T$ whose support $Z \subseteq X \times T$ is finite $i\colon X \times T \to \bar{X} \times T$ and $j \colon Z \to \bar{X} \times T$ be the natural embeddings. Then $i_* \cF = j_* \cF$ defines a flat family of $\bar{\cA}$-modules of finite length. This exhibits $\bM$ as a substack of $\bar{\bM}$ and it only remains to observe that the condition of being in $\bM$ is open. 
\end{proof}

To obtain analogues of the notion of skyscraper sheaves, we need some discrete invariants. The following helps us define such invariants. 
\begin{proposition}  \label{prop:cstarflat}
Let $\cF \in \Coh(\bX \times T)$ be a family of sheaves which is flat over $T$ and $\pi_1 \colon \bX \times T \to \bX, \pi_2 \colon \bX \times T \to T$ be the projection maps. Then for any coherent locally free sheaf $\cV \in \Coh(\bX)$, the sheaf $(c\times \id_T)_*\cHom_{\bX \times T}(\pi_1^*\cV, \cF)$ is flat over $T$. In particular, if $\cF$ is a flat family of finite length sheaves, then $\pi_{2*}\cHom_{\bX \times T}(\pi_1^*\cV, \cF)$ is a locally free sheaf on $T$. 
\end{proposition}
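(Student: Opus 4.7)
The plan is to first trade the internal Hom for a tensor product, and then to push down along $c \times \id_T$ while preserving flatness by combining Alper's projection formula with the exactness furnished by cohomological affineness. Finally, the finite support hypothesis will upgrade the flat pushforward to a locally free sheaf on $T$.

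First, since $\cV$ is coherent locally free, there is a canonical identification
$$\cHom_{\bX \times T}(\pi_1^* \cV, \cF) \;\cong\; \pi_1^* \cV^{\vee} \otimes_{\bX \times T} \cF,$$
and this is flat over $T$ because $\cF$ is $T$-flat and $\pi_1^*\cV^{\vee}$ is locally free. Write $\cH$ for this sheaf. The morphism $c \times \id_T \colon \bX \times T \to X \times T$ is cohomologically affine, being the base change of the cohomologically affine $c$ along $X \times T \to X$; note that $X \times T$ is an algebraic space since $X$ and $T$ are schemes. Let $p_2 \colon X \times T \to T$ and $\pi_2 = p_2 \circ (c\times \id_T)$ be the projections. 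For any $\cO_T$-module $M$, pulling $M$ back via $p_2$ and applying Proposition~\ref{prop:projection} to $c \times \id_T$ yields
$$(c\times\id_T)_*\cH \otimes_{\cO_T} M \;\cong\; (c\times\id_T)_*\bigl(\cH \otimes_{\bX \times T} \pi_2^* M\bigr).$$
Since $\cH$ is $T$-flat, the functor $\cH \otimes_{\bX \times T} \pi_2^*(?)$ is exact in $M$; composing with the exact $(c\times \id_T)_*$ shows the left-hand side is exact in $M$. Hence $(c\times \id_T)_*\cH$ is flat over $T$, establishing the first assertion.

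For the second assertion, suppose $\cF$ has support finite over $T$. Then the same is true of $\cH = \pi_1^*\cV^{\vee}\otimes \cF$, and hence of $\cK := (c\times \id_T)_*\cH$, which is a coherent $T$-flat sheaf on $X \times T$ with support finite over $T$. Because $p_2$ restricted to this support is a finite morphism, $p_{2*}\cK$ is coherent, and the same projection formula argument (now for the affine, hence cohomologically affine, restriction of $p_2$ to $\operatorname{supp}\cK$) shows $p_{2*}\cK$ is flat over $T$. But a coherent flat sheaf on the (locally Noetherian) base $T$ is locally free, so $\pi_{2*}\cHom_{\bX \times T}(\pi_1^*\cV,\cF) = p_{2*}\cK$ is locally free as required.

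The main point requiring care is the application of Alper's projection formula to $c \times \id_T$: one must justify that $X \times T$ is an algebraic space (so that Proposition~\ref{prop:projection} applies) and that cohomological affineness descends to the base change, both of which are immediate from the preliminaries but worth stating explicitly. Everything else reduces to the two formal inputs already in hand, namely exactness of $(c\times \id_T)_*$ and the identification of $\cHom$ with a tensor product.
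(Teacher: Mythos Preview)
Your argument is correct and follows essentially the same approach as the paper's proof: both reduce flatness of the pushforward to the projection formula (Proposition~\ref{prop:projection}) combined with the exactness of $(c\times\id_T)_*$. The only cosmetic differences are that the paper first reduces to showing $(c\times\id_T)_*\cF$ itself is $T$-flat (rather than working with $\cH$ throughout) and checks flatness via preservation of injections rather than exactness of the tensor functor, and that for the second assertion the paper simply cites \cite[Proposition~9.2(d)]{Hart} where you spell out the finite-pushforward step explicitly.
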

\begin{proof}
Since $\cHom_{\bX \times T}(\pi^*\cV, \cF)$ is also flat over $T$, it suffices, for the first assertion, to show that $(c \times \id_T)_* \cF$ is flat over $T$. To this end, consider an injection of quasi-coherent sheaves $M' \hookrightarrow M$ on $T$. Flatness of $\cF$ means that we have an injection $\cF \otimes_{\bX \times T} \pi_2^* M' \hookrightarrow \cF \otimes_{\bX \times T} \pi_2^* M$. The projection formula in Proposition~\ref{prop:projection} and the fact that $c \times \id_T$ is cohomologically affine now shows that the natural map
$$ (c \times \id_T)_* \cF \otimes_{X \times T} \pi_2^*M' \hookrightarrow (c \times \id_T)_* \cF \otimes_{X \times T} \pi_2^*M$$
is injective. Hence $(c \times \id_T)_* \cF$ is indeed flat over $T$. The second assertion now follows from \cite[Proposition~9.2(d)]{Hart}. 
\end{proof}

The proposition allows us to make the following definition. 
\begin{definition}
Let $\cF\in \cM_{\textup{Fin}}(T)$ be a flat family of finite length sheaves on $\bX$ over $T$ and $\cV$ be a coherent locally free sheaf on $\bX$. The {\em $\cV$-rank of $\cF$} is defined to be 
$$\cV\!-\!\rank \cF := \rank_T \pi_{2*} \cHom_{\bX \times T}(\pi_1^* \cV, \cF).$$
We say that $\cF$ has {\em skyscraper $\cV$-rank} if $\cV\!-\!\rank \cF = \rank \cV$. We define the {\em moduli stack $\bM_{\textup{Sky}}$ (resp. $\bM_{\cG-\textup{Sky}}$) of skyscraper sheaves (resp. relative to $\cG = \oplus \cG_i$)}, to be the substack of $\bM_{\textup{Fin}}$ consisting of $\cF \in \bM_{\textup{Fin}}$ with skyscraper $\cV$-rank for every coherent locally free sheaf $\cV$ on $\bX$ (resp. for all $\cV = \cG_i$). 
\end{definition}

The $\cV$-rank is an important discrete invariant we can use to decompose the moduli stack of finite length sheaves. These invariants are of course, not all independent and it is useful to know the relationships between them. For example, we have

\begin{proposition}  \label{prop:Vrankcoarse}
Let $\cF \in \bM_{\textup{Fin}}(T)$. Suppose $\cV$ is a coherent locally free sheaf  on $\bX$ and $\cW$ is a coherent locally free sheaf on $X$ of rank $r$. Then
$$ (c^* \cW \otimes_{\bX} \cV)\!-\!\rank \cF = r (\cV\!-\!\rank \cF).$$
\end{proposition}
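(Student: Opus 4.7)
The plan is to unpack the definition of $\cV$-rank, use that $c^*\cW$ is locally free to factor it out of the internal $\cHom$, apply Alper's projection formula (Proposition~\ref{prop:projection}) along the cohomologically affine morphism $c\times\id_T$, and then reduce to a fiberwise count on the scheme $X \times T$. Write $\pi_1^{X\times T}, \pi_2^{X\times T}$ for the projections from $X \times T$, which satisfy $c\circ \pi_1 = \pi_1^{X\times T}\circ(c\times\id_T)$ and $\pi_2 = \pi_2^{X\times T}\circ(c\times\id_T)$.

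More precisely, since $\pi_1^*c^*\cW$ is locally free on $\bX\times T$, there is a canonical isomorphism
$$\cHom_{\bX \times T}(\pi_1^*(c^*\cW\otimes_{\bX}\cV),\cF) \cong (c\times\id_T)^*(\pi_1^{X\times T})^*\cW^\vee \otimes_{\bX\times T} \cHom_{\bX\times T}(\pi_1^*\cV,\cF).$$
Because cohomological affineness is preserved under base change to the algebraic space $X\times T$, the morphism $c\times\id_T$ is cohomologically affine, so pushing the identity forward and invoking Proposition~\ref{prop:projection} yields
$$(c\times\id_T)_*\cHom_{\bX\times T}(\pi_1^*(c^*\cW\otimes\cV),\cF) \cong (\pi_1^{X\times T})^*\cW^\vee \otimes_{X\times T} \cE,$$
where $\cE := (c\times\id_T)_*\cHom_{\bX\times T}(\pi_1^*\cV,\cF)$. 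By Proposition~\ref{prop:cstarflat}, $\cE$ is flat over $T$ with support finite over $T$; the same is therefore true of $(\pi_1^{X\times T})^*\cW^\vee\otimes\cE$.

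Thus the task reduces to the following statement on the scheme $X\times T$: if $\cE$ is a $T$-flat coherent sheaf on $X\times T$ whose support is finite over $T$, and $\cW$ is locally free of rank $r$ on $X$, then
$$\rank_T \pi_{2*}^{X\times T}\bigl((\pi_1^{X\times T})^*\cW^\vee\otimes\cE\bigr) = r\cdot\rank_T \pi_{2*}^{X\times T}\cE.$$
Both pushforwards are locally free on $T$ (Proposition~\ref{prop:cstarflat} applied to both $\cE$ and its twist by $(\pi_1^{X\times T})^*\cW^\vee$), so it suffices to compare fiber dimensions. At any $t\in T$, finiteness of the support over $T$ ensures that both pushforwards commute with base change along $\{t\}\hookrightarrow T$, giving fibers $\Gamma(X,\cW^\vee\otimes\cE_t)$ and $\Gamma(X,\cE_t)$, where $\cE_t$ is a finite length sheaf on $X$. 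Its support is a finite set of closed points, on a sufficiently small open neighborhood $U\subset X$ of which $\cW^\vee$ trivialises as $\cO_U^{\oplus r}$; thus $\cW^\vee\otimes\cE_t\cong \cE_t^{\oplus r}$ on $U$, and therefore $\dim_k\Gamma(X,\cW^\vee\otimes\cE_t)=r\cdot\dim_k\Gamma(X,\cE_t)$, giving the required equality of ranks.

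The only mildly delicate step is the fiberwise reduction, which rests on the fact that for a coherent sheaf whose support is finite over the base, higher direct images vanish and zeroth direct images commute with arbitrary base change; everything else is bookkeeping around locally free factors and the projection formula.
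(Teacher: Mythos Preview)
Your proposal is correct and follows essentially the same approach as the paper: factor $c^*\cW$ out of the $\cHom$, apply the projection formula along $c\times\id_T$, and then exploit finiteness of the support over $T$ to trivialise $\cW$. The only difference is in the final step: the paper observes directly that, locally on $T$, the support of $\cE$ lies in an open of $X\times T$ on which $\pi_1^*\cW$ is free, giving $\pi_1^*\cW\otimes\cE\simeq\cE^{\oplus r}$ there; you instead pass to fibres over points $t\in T$ and invoke base change for finite morphisms. Both are valid, with the paper's route being marginally shorter. (Incidentally, your use of $\cW^\vee$ rather than $\cW$ on the right-hand side is the more careful bookkeeping, though it makes no difference to the rank.)
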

\begin{proof}
The projection formula (Proposition~\ref{prop:projection}) ensures that 
$$ \pi_{2*} \cHom_{\bX \times T}(\pi_1^*c^*\cW \otimes_{\bX \times T} \pi_1^* \cV, \cF) \simeq 
\pi_{2*} \left(\pi_1^*\cW \otimes_{X \times T} (c\times \id)_* \cHom_{\bX \times T}(\pi_1^* \cV, \cF)\right)$$
where we have abused notation by letting $\pi_1,\pi_2$ denote projections from both $\bX \times T$ and $X \times T$. Now the support of $\cHom :=(c\times \id)_* \cHom_{\bX \times T}(\pi_1^* \cV, \cF)$ is finite, so locally on $T$ we have 
$$ \pi_1^* \cW \otimes_{X \times T} \cHom \simeq \cHom^{\oplus r}.$$
\end{proof}

Recall that the diagonal map $\Delta \colon \bX \to \bX \times \bX$ is a representable morphism which is not a monomorphism unless $\bX$ is an algebraic space. If $\bX$ is a separated DM-stack, then by definition, $\Delta$ is finite and in particular, affine. We may consider $\Delta_* \cO_{\bX}$ as a family of coherent sheaves on $\bX$ over $\bX$. To be explicit, we will consider the second factor in $\bX \times \bX$, the base space for the family. 

\begin{notation} \label{notn:bimodtensor}
Given algebraic stacks $\bX, \bY$, and quasi-coherent sheaves $\cF_X \in \Qcoh(\bX), \cF \in \Qcoh(\bX \times \bY), \cF_Y \in \Qcoh(\bY)$ we let 
$$ \cF_X \otimes_{\bX} \cF \otimes_{\bY} \cF_Y :=
\pi_X^* \cF_X \otimes_{\bX\times \bY} \cF \otimes_{\bX \times\bY} \pi_Y^*\cF_Y $$
where $\pi_X,\pi_Y$ are the projection maps. 
\end{notation}

\begin{example}  \label{eg:skyUmodG}
To get a feel for $\Delta_* \cO_{\bX}$, we consider the special case $\bX= [U/G]$ where $U$ is a quasi-projective scheme and $G$ is a finite group acting on $U$. If we wish to view coherent sheaves on $\bX$ as $G$-equivariant sheaves on $\bX$, then we should pull back $\Delta_* \cO_{\bX}$ via $\pi \times \id_{\bX} \colon U \times \bX \to \bX \times \bX$ where $\pi \colon U \to \bX$ is the canonical quotient map, and remember the $G$-action. We will study this family over $\bX$ by pulling back to a family over $U$. Hence consider the cartesian diagram
$$\begin{CD}
G \times U @>>> \bX \\
@V{\delta}VV @VV{\Delta}V \\
U \times U @>{\pi \times \pi}>> \bX \times \bX 
\end{CD}.$$
Here $\delta = (\alpha, pr_2)$ where $\alpha \colon G \times U \to U$ is the action and $pr_2$ is the projection map. Thus $\Delta_*\cO_{\bX}$ when pulled 
back to a family on $U$ is given by the $G$-equivariant sheaf $\delta_* \cO_{G \times U}$. It is useful to view this as the skew group ring $G \# \cO_U$ where left multiplication by $G$ and $\cO_U$ give the structure of a $G$-equivariant sheaf, and right multiplication by $\cO_{U}$ determines the geometry of the family.  Note that the support of $G \# \cO_U$ as a sheaf on $U \times U$ is $Z = \cup_{g \in G} (g,\id)(U)$ which is finite over $U$. Also, if we pick a geometric point $\Spec k$ of the base $U$, then the corresponding $G$-equivariant sheaf is $G \# \cO_U \otimes_U k \simeq G \# k$, the regular representation of $G$ over $k$.
In other words, $\Delta_* \cO_\bX$ is analogous to the universal family on the corresponding $G$-Hilbert scheme.
\end{example}

\begin{proposition} \label{prop:universalsky}
Suppose that $\bX$ is a separated quasi-projective stack. Then $\Delta_* \cO_{\bX}$ is a flat family of skyscraper sheaves over $\bX$. 
\end{proposition}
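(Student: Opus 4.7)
The plan is to verify three things about $\Delta_* \cO_\bX$ regarded as a sheaf on $\bX \times \bX$ with the second factor serving as the base $T = \bX$: it is a coherent sheaf flat over $T$; the support of $\Phi_\cG(\Delta_* \cO_\bX) = (c \times \id)_* \cHom_{\bX \times \bX}(\pi_1^* \cG, \Delta_* \cO_\bX)$ is finite over $T$; and the skyscraper $\cV$-rank condition $\cV\!-\!\rank = \rank \cV$ holds for every coherent locally free sheaf $\cV$ on $\bX$. The key observation unifying all three is the projection formula for $\Delta$ combined with the tautological identity $\pi_2 \circ \Delta = \id_\bX$.

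Since $\bX$ is separated, $\Delta$ is finite, hence representable and affine, so $\Delta_*$ is exact and preserves coherence. The projection formula $\Delta_*(\cE) \otimes \cF \simeq \Delta_*(\cE \otimes \Delta^* \cF)$ is available here: although Proposition~\ref{prop:projection} is stated only for targets that are algebraic spaces, the version for the finite morphism $\Delta$ follows by working smooth-locally on $\bX \times \bX$ or, following Remark~\ref{rem:Morita}, by passing to the Morita-equivalent sheaves of algebras on the coarse moduli spaces.

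Granted this, flatness over the base is immediate: for an injection $M' \hookrightarrow M$ of quasi-coherent sheaves on $\bX$, the projection formula and $\Delta^* \pi_2^* = (\pi_2 \Delta)^* = \id$ give $\Delta_* \cO_\bX \otimes_{\bX \times \bX} \pi_2^* M \simeq \Delta_* M$, and similarly for $M'$; exactness of $\Delta_*$ then delivers the required injectivity. For any coherent locally free sheaf $\cV$ on $\bX$, the same identities yield
$$\cHom_{\bX \times \bX}(\pi_1^* \cV, \Delta_* \cO_\bX) \simeq \Delta_* \cO_\bX \otimes (\pi_1^* \cV)^\vee \simeq \Delta_* \cV^\vee,$$
so $\pi_{2*}$ of the left-hand side is $\cV^\vee$, locally free of rank $\rank \cV$. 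Taking $\cV = \cG$ and applying $(c \times \id)_*$ shows $\Phi_\cG(\Delta_* \cO_\bX)$ is supported on the graph of $c \colon \bX \to X$ inside $X \times \bX$, which maps isomorphically onto $T = \bX$ via $\pi_2$; this verifies the finite-support condition. Letting $\cV$ range over all locally free sheaves establishes the skyscraper $\cV$-rank condition simultaneously.

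The main obstacle I expect is making the projection formula for the stacky morphism $\Delta$ rigorous, since the form we need is not the precise version recorded as Proposition~\ref{prop:projection}. Once that is settled, everything else is a one-line formal manipulation using $\pi_2 \circ \Delta = \id$.
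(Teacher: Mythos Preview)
Your argument is correct and follows essentially the same route as the paper: both hinge on the projection formula for the representable finite morphism $\Delta$ together with $\pi_j \circ \Delta = \id_\bX$, from which flatness (via exactness of $\Delta_*$) and the skyscraper $\cV$-rank computation drop out identically. The one point of difference is the verification of finite support: the paper works \'etale locally on $X$, reducing to the explicit $[U/G]$ computation recorded just before the proposition (where the support is $\cup_{g\in G}(g,\id)(U)$), whereas you observe directly that $(c\times\id)_*\Delta_*\cG^\vee = (c,\id)_*\cG^\vee$ is supported on the graph of $c$, which projects isomorphically onto the base via $\pi_2$. Your version is a little slicker and avoids the local model; the paper's version has the compensating virtue of making the fibres visible as regular representations. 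Your caution about the projection formula for the stacky target is well-placed --- the paper simply asserts it (``$\Delta$ is representable so the projection formula shows\ldots'') without further comment, so your remark about reducing to schemes smooth-locally or via Remark~\ref{rem:Morita} is a genuine clarification.
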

\begin{proof}
Now $\Delta$ is representable so the projection formula shows that for locally free $\cV_1,\cV_2 \in \Coh(\bX)$ we have
\begin{equation} \cV_1 \otimes_{\bX} \Delta_* \cO_{\bX} \otimes_{\bX} \cV_2 \simeq \Delta_*(\cV_1 \otimes_{\bX} \cV_2) .
\label{eq:projsky}
\end{equation}
Also, our assumption that $\bX$ is separated ensures now that $\Delta_*$ is exact so the same is true for $\Delta_* \cO_{\bX} \otimes_{\bX} -$ on calculating Tor. It follows that $\Delta_* \cO_{\bX}$ is indeed a flat family of coherent sheaves. 

We check now that the support $Z$ of $\Delta_* \cO_{\bX}$ is finite over $\bX$. Now $\bX$ is \'etale locally  a quotient stack of the form $[U/G]$ where $U$ is a scheme and $G$ is a finite group, so this follows from the local computation above. 

Finally, (\ref{eq:projsky}) shows that 
$$ \pi_{2*} (\cV_1 \otimes_{\bX} \Delta_* \cO_{\bX})
\simeq \pi_{2*} \Delta_*(\cV_1) = \cV_1
$$
so the discrete invariants are precisely those of a skyscraper sheaf. 
\end{proof}

\section{Tensor stable moduli stacks} \label{sc:equaliserstack}

In this section, we introduce a new moduli stack called the {\em tensor stable moduli stack}. It incorporates some of the monoidal structure of $\Coh(\bX)$ by looking at ``endomorphisms'' of the moduli stack of skyscraper sheaves induced by tensoring by line bundles. This generalises the notion of the Serre stable moduli stack introduced in \cite{CL} which corresponds to the case of the canonical line bundle. One can view this approach as a way of putting Bondal-Orlov's notion \cite{BO} of a {\em point object} on a proper moduli-theoretic footing. Indeed, in the case of a single line bundle $\cL \in \Pic(\bX)$, we will see the tensor stable moduli stack parametrises isomorophisms $\cM \simeq \cL \otimes_{\bX} \cM$ for a skyscraper sheaf $\cM$. This has two important effects. Firstly, much like the stability condition in GIT, it limits the possible sheaves $\cM$ that can occur (see \cite[Example~7.3]{CL} for an elementary example). Secondly, it changes the automorphism groups of objects (see \cite[Proposition~4.1(iii)]{CL} for an illuminating example).

Let $\bM$ be an Artin stack and $f \colon \bM \dashrightarrow \bM$ be a partially defined morphism, that is, there is an inclusion of locally closed substacks $\iota \colon \bM' \to \bM$ and a morphism $f\colon \bM' \to \bM$. Consider the graph morphism $\Gamma = \Gamma_{f} \colon \bM' \xrightarrow{\Delta} \bM' \times \bM' \xrightarrow{\iota \times f} \bM \times \bM$. We define the {\em fixed point stack of $f$} to be the fibre product stack
$$ \bM^f = \bM \times_{\Delta,\bM \times \bM, \Gamma} \bM'.$$
The definition depends of course on the domains of definition $\bM'$ which is suppressed from the notation, but like the situation with rational maps in algebraic geometry, there is often a clear ``maximal'' choice. Since stacks are themselves categories, the notion of fixed point stacks exhibits is somewhat subtle ``higher'' categorical phenomena. For example, the fixed point stack of the identity morphism is actually the inertia stack, which is not necessarily the original stack.

We will mainly be interested in the cases where $\bM$ and $\bM'$ are moduli stacks on some Grothendieck categories and the partially defined maps are induced by functors. For example, let $\bX$ be a separated quasi-projective stack, say with generator $\cG = \oplus \cG_i$ and $\bM =  \bM_{\cG-\textup{Sky}}, \tilde{\bM} = \bM_{\textup{Fin}}$. Then tensoring by a rank $r$ vector bundle $\cV$ on $\bX$, is a functor from $\Coh(\bX) \to \Coh(\bX)$ which induces a morphism of stacks $\bM \to \tilde{\bM}$. Suppose now that $\cL$ is a line bundle so tensoring by $\cL$ induces a partially defined map $\bM \dashto \bM$. We let $\bM^{\cL}$ be the fixed point stack of $\bM$ with respect to the self-map induced by tensoring by $\cL$. 

\begin{example} \label{eg:BmuRecovery}
To get a feel for the fixed point stack consider $\bX = B\mu_n := [\Spec k / \mu_n]$. The category of coherent sheaves on $\bX$ consists of finite dimensional representations of $\mu_n$. If $\chi \colon \mu_n \hookrightarrow \bG_m$ denotes the faithful character given by the inclusion homomorphism, then as per Example~\ref{eg:skyUmodG}, the only skyscraper sheaf is $\cM = \oplus_{i=0}^{n-1}\chi^i$. Using the line bundle $\chi$, we get a morphism $\bM \to \bM$ mapping $\cM$ to $\chi \otimes \cM$. Unravelling the definition of fibre products of stacks shows that the objects of $\bM^{\chi}$ are isomorphisms of $\mu_n$-modules $\theta \colon \cM \simeq \chi \otimes \cM$. Rigidification means that changing $\theta$ by a scalar does not change the object. It is not difficult to see that up to isomorphism, there is only one object in $\bM^{\chi}$ which is given by $\cM$ above and a choice of isomorphisms $\theta_i \colon \chi^i \simeq \chi \otimes \chi^{i-1}$ for $i = 1, \ldots, n-1$ and $\theta_0 \colon \chi^0 \simeq \chi \otimes \chi^{n-1}$. Let us compute the automorphism group of this object. An automorphism of $\cM = \chi^0 \oplus \chi \oplus \ldots \oplus \chi^{n-1}$ is given by $\alpha = (\alpha_0,\ldots, \alpha_{n-1}) \in \bG_m^n/\Delta$ where $\Delta$ is the diagonal copy of $\bG_m$ in $\bG_m^n$. There is an induced isomorphism $\chi \otimes \alpha \colon \chi \otimes \cM \simeq \chi \otimes \cM$, and if we use $\theta = (\theta_i)$ to  bring this back to $\cM$ we get $(\alpha_{n-1}, \alpha_0, \ldots , \alpha_{n-2})$. We are thus looking for eigenvectors $\alpha$ to  the cyclic permutation operator $\phi \colon (\alpha_0,\ldots, \alpha_{n-1}) \mapsto (\alpha_{n-1}, \alpha_0, \ldots , \alpha_{n-2})$. Now $\phi$ has $n$ linearly independent eigenvectors, all of which lie in $\bG_m^n$ so the automorphism group is the group of eigenvalues $\mu_n$. We see thus that $\bM^{\chi} \simeq \bX$ recovers the original stack. 
\end{example}

Suppose now we are given two line bundles $\cL_1,\cL_2$. Now there is a canonical natural isomorphism $\cL_1 \otimes (\cL_2 \otimes -) \simeq \cL_2 \otimes (\cL_1 \otimes -)$, so $\cL_2 \otimes (-)$ induces a partially defined map on $\bM^{\cL_1}$ and we may define iteratively $(\bM^{\cL_1})^{\cL_2}$. 

\begin{proposition}  \label{prop:doublestable}
Given a test scheme $T$, an object of $(\bM^{\cL_1})^{\cL_2}(T)$ consists of a flat family $\cM \in \bM(T)$, isomorphisms $\phi_1 \colon \cL_1 \otimes_{\bX} \cM \simeq \cM \otimes_T \cN_1, \ \ \phi_2 \colon \cL_2 \otimes_{\bX} \cM \simeq \cM \otimes_T \cN_2$ where $\cN_1, \cN_2$ are line bundles on $T$ and the $\phi_i$ are defined up to scalar only, such that the following diagram commutes up to scalar
\begin{equation} \label{eq:doublestable}
\begin{CD}
\cL_1 \otimes_{\bX} \cL_2 \otimes_{\bX} \cM @>>> \cL_2 \otimes_{\bX} \cL_1 \otimes_{\bX} \cM @>{1 \otimes \phi_1}>> \cL_2 \otimes_{\bX} \cM \otimes_T \cN_1 \\
@V{1 \otimes \phi_2}VV @. @VV{\phi_2 \otimes 1}V \\
\cL_1 \otimes_{\bX} \cM \otimes_T \cN_2 @>{\phi_1 \otimes 1}>> \cM \otimes_T \cN_1 \otimes_T \cN_2 @>>> 
\cM \otimes_T \cN_2 \otimes_T \cN_1
\end{CD}.
\end{equation}
\end{proposition}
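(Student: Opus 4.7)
The plan is to unwind the iterated fixed point stack definition step by step, paying careful attention to the scalar ambiguity inherent in the rigidified moduli stack $\bM$.

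First, I would describe $T$-points of $\bM^{\cL_1}$ by applying the fixed point stack construction to the partially defined self-map induced by $\cL_1 \otimes_\bX (?)$. Unpacking the fibre product $\bM \times_{\Delta,\bM\times\bM,\Gamma}\bM'$ and using the description of morphisms in the rigidified stack recalled in Section~\ref{sec:sky}, such a point is a flat family $\cM \in \bM(T)$ together with an isomorphism in the rigidified stack from $\cL_1 \otimes_\bX \cM$ to $\cM$, which is represented by a genuine isomorphism $\phi_1 \colon \cL_1 \otimes_\bX \cM \to \cM \otimes_T \cN_1$ for some line bundle $\cN_1$ on $T$, defined only up to a unit on $T$.

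Next, I would identify the partially defined self-map of $\bM^{\cL_1}$ induced by tensoring with $\cL_2$. Using the canonical symmetry isomorphism of $\otimes_\bX$, this self-map sends $(\cM,\phi_1)$ to $(\cL_2 \otimes_\bX \cM, \phi_1')$, where $\phi_1'$ is the composite
\[ \cL_1 \otimes_\bX \cL_2 \otimes_\bX \cM \xrightarrow{\sim} \cL_2 \otimes_\bX \cL_1 \otimes_\bX \cM \xrightarrow{1 \otimes \phi_1} \cL_2 \otimes_\bX \cM \otimes_T \cN_1. \]
Note that this is the $\cL_1$-stability datum naturally attached to $\cL_2 \otimes_\bX \cM$; the symmetry iso is what makes it possible to define the self-map on $\bM^{\cL_1}$ at all.

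Finally, unwinding the second fixed point construction, an object of $(\bM^{\cL_1})^{\cL_2}(T)$ is an object $(\cM,\phi_1) \in \bM^{\cL_1}(T)$ together with an isomorphism in $\bM^{\cL_1}(T)$ between $(\cM,\phi_1)$ and $(\cL_2 \otimes_\bX \cM, \phi_1')$. Since a morphism in the rigidified stack $\bM^{\cL_1}(T)$ is precisely a morphism in $\bM(T)$ intertwining the $\cL_1$-stability data, this reduces to an isomorphism $\phi_2 \colon \cL_2 \otimes_\bX \cM \to \cM \otimes_T \cN_2$ (defined up to a scalar) such that, after identifying $\cN_1 \otimes_T \cN_2 \simeq \cN_2 \otimes_T \cN_1$, the equality
\[ (\phi_2 \otimes 1_{\cN_1}) \circ \phi_1' = (\phi_1 \otimes 1_{\cN_2}) \circ (1_{\cL_1} \otimes \phi_2) \]
holds up to a unit on $T$. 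Substituting the definition of $\phi_1'$ gives precisely the diagram~(\ref{eq:doublestable}), and the "up to scalar" commutativity is exactly the rigidification ambiguity in $\phi_2$.

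The main obstacle is the bookkeeping associated with rigidification: at each level of the construction, one must be careful to track which scalars have been quotiented out, how the twists by the $\cN_i$ absorb the remaining ambiguity, and how the symmetry iso of $\otimes_\bX$ converts an $\cL_1$-stability datum on $\cM$ into one on $\cL_2 \otimes_\bX \cM$. Once these bookkeeping issues are handled, the proposition is purely formal, following from the definitions of the fixed point stack and of morphisms in the rigidified moduli stack.
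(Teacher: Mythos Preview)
Your proposal is correct and follows essentially the same approach as the paper's proof: first unwind $\bM^{\cL_1}(T)$ to obtain the pair $(\cM,\phi_1)$ up to scalar, then compute the $\cL_2$-twist self-map on $\bM^{\cL_1}$ using the symmetry isomorphism $\cL_1\otimes\cL_2\simeq\cL_2\otimes\cL_1$, and finally read off that a fixed point datum is an isomorphism $\phi_2$ making diagram~(\ref{eq:doublestable}) commute up to scalar. The bookkeeping you flag about rigidification and the symmetry isomorphism is exactly what the paper tracks as well.
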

\begin{proof}
We begin by considering an object of $\bM^{\cL_1}(T)$. By definition of the product of stacks, this consists of the data of a pair $(\cM, \cM') \in \bM(T) \times\bM(T)$ and an isomorphism $\alpha \colon (\cM, \cM) \simeq (\cM', \cL_1 \otimes \cM')$ in $\bM(T) \times \bM(T)$. Since we are working by default with rigidified moduli stacks, this reduces to the data $(\cM, \phi_1)$ where $\cM \in \bM(T)$ and $\phi_1 \colon \cL_1 \otimes_{\bX} \cM \simeq \cM \otimes_T \cN_1$ is an isomorphism for some line bundle $\cN_1$ on $T$. This isomorphism is defined only up to scalar. Now tensoring by $\cL_2$ induces a partially defined automorphism of $\bM^{\cL_1}$ which sends $(\cM,\phi_1)$ to the pair 
$$(\cL_2 \otimes_{\bX} \cM, \cL_1 \otimes_{\bX} \cL_2 \otimes_{\bX} \cM \simeq \cL_2 \otimes_{\bX} \cL_1 \otimes_{\bX} \cM \stackrel{1 \otimes \phi_1}{\simeq} \cL_2 \otimes_{\bX} \cM \otimes_T \cN_1).$$
Note that we have used the natural isomorphism $\cL_1 \otimes (\cL_2 \otimes -) \simeq \cL_2 \otimes (\cL_1 \otimes -)$ here. An object of $(\bM^{\cL_1})^{\cL_2}(T)$ consists of an isomorphism between these two pairs. This is given by $\phi_2 \colon \cL_2 \otimes_{\bX} \cM \simeq \cM \otimes_T \cN_2$ such that the diagram~(\ref{eq:doublestable}) commutes up to scalar.
\end{proof}
This description of $(\bM^{\cL_1})^{\cL_2}$ makes clear the symmetry between $\cL_1, \cL_2$ so it does not matter which order we perform the fixed point stacks. Another interesting point is that given the object of $(\bM^{\cL_1})^{\cL_2}(T)$ defined by the data $(\cM, \phi_1, \phi_2)$ above, there is a well-defined scalar $\lambda \in \cO_T^{\times}$ such that in diagram~(\ref{eq:doublestable}) we have $(\phi_2 \otimes 1) (1 \otimes \phi_1) = \lambda (\phi_1 \otimes 1) (1 \otimes \phi_2)$. Indeed, changing either $\phi_1$ or 
$\phi_2$ by a scalar does not affect $\lambda$. The formation of this scalar is compatible with pullback in the stack so there is a well-defined morphism of stacks $\nu \colon (\bM^{\cL_1})^{\cL_2} \rightarrow \bG_m$. 

\begin{definition}  \label{defn:doubletensor}
We let $\bM^{\cL_1,\cL_2}$ be the fibre product stack $(\bM^{\cL_1})^{\cL_2} \times_{\bG_m} 1$ where the map $1 \hookrightarrow \bG_m$ is the inclusion of the unit map. The data of an object over $T$ consists of $(\cM, \phi_1, \phi_2)$ as in Proposition~\ref{prop:doublestable} but now where Diagram~(\ref{eq:doublestable}) commutes on the nose. Similarly, given line bundles $\cL_1,\ldots,\cL_s$ on $\bX$, we define $\bM^{\cL_1,\ldots,\cL_s}$ and call it the {\em tensor stable moduli stack of skyscraper sheaves with respect to line bundles $\cL_1, \ldots, \cL_s$ (and generator $\cG$)}. We refer to the isomorphisms $\phi_i$ as {\em tensor stability data}.
\end{definition}

\section{Tautological moduli problem}

In this section, we show how the tensor stable moduli stack can be used to solve the tautological moduli problem in the special case where inertia groups are all abelian.

Let $\cL_1, \ldots, \cL_s$ be line bundles on a quasi-projective stack $\bX$. Let $c \colon \bX \to X$ be the canonical morphism to the coarse moduli scheme. Suppose that the geometric stabiliser groups act faithfully on $\oplus \cL_i$, in which case we say that $\oplus \cL_i$ is a faithful bundle. In this case, the geometric stabilisers are abelian, and conversely, given such a quasi-projective stack with abelian geometric stabilisers, there exists \'etale locally on $X$, a faithful direct sum of line bundles. Burnside's theorem ensures that $\bX$ has a generating sheaf $\cG$ which is  a direct sum of line bundles constructed by tensoring the $\cL_i$ together. We wish to study the tensor stable moduli stack of skyscraper sheaves $\bM^{\cL_1,\ldots,\cL_s}$ with respect to $\cL_1, \ldots, \cL_s$ and some appropriate generator $\cG$. 

The following is standard in stack theory. Recall that if $\cB$ is a $\bZ^s$-graded sheaf of algebras on $X$, then there is an action of $(\bZ^s)^{\vee} = \bG_m^s$ on $P = \underline{\Spec}_X \cB$. 
\begin{proposition}  \label{prop:XisPmodG}
With the above hypotheses, there exists a $\bZ^s$-graded sheaf of algebras $\cB = \oplus \cB_{\chi_1,\ldots,\chi_s}$ on $X$ 
such that $\bX \simeq [P/\bG_m^s]$ where $P = \underline{\Spec}_X \cB$. 
\end{proposition}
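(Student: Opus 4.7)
The plan is to realise $\bX$ as the quotient of an affine $X$-scheme by a $\bG_m^s$-action coming from the characters attached to the $\cL_i$. The tuple $(\cL_1,\ldots,\cL_s)\in \Pic(\bX)^s$ classifies a morphism $\lambda \colon \bX \to B\bG_m^s$, and the candidate space is the fibre product
$$ P := \textup{pt} \times_{B\bG_m^s, \lambda} \bX, $$
which carries a tautological $\bG_m^s$-action making $\pi \colon P \to \bX$ a $\bG_m^s$-torsor. This immediately gives the presentation $\bX \simeq [P/\bG_m^s]$; the remaining work is to express $P$ as $\underline{\Spec}_X \cB$ for a suitable $\bZ^s$-graded algebra.

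The main step is to verify that $P$ is affine over $X$. Since $\oplus \cL_i$ is faithful, the induced map of stabiliser groups $G_x \to \bG_m^s$ at every geometric point $x$ of $\bX$ is injective, and by the standard stabiliser criterion for representability the morphism $\lambda$ is representable; hence $P$ is an algebraic space. The torsor $\pi$ is affine, in particular cohomologically affine, and composing with the cohomologically affine morphism $c \colon \bX \to X$ shows, via \cite[Proposition~3.10]{Alp}, that $P \to X$ is cohomologically affine. Since both $P$ and $X$ are algebraic spaces, Serre's criterion upgrades this to honest affineness, so $P = \underline{\Spec}_X \cB$ for $\cB := (c \circ \pi)_* \cO_P$.

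It remains to identify the grading on $\cB$. The $\bG_m^s$-action on $P$ equips $\cB$ with a $\bZ^s$-grading $\cB = \bigoplus_{\vec{n} \in \bZ^s} \cB_{\vec{n}}$ by weight subsheaves. By the standard weight-space decomposition of the structure sheaf of a $\bG_m^s$-torsor, there is an isomorphism
$$ \pi_* \cO_P \;\simeq\; \bigoplus_{\vec{n} \in \bZ^s} \cL_1^{\otimes n_1} \otimes \cdots \otimes \cL_s^{\otimes n_s} $$
of $\bZ^s$-graded sheaves of $\cO_{\bX}$-algebras; applying $c_*$ then yields $\cB_{\vec{n}} \simeq c_*(\cL_1^{\otimes n_1} \otimes \cdots \otimes \cL_s^{\otimes n_s})$, as desired.

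The principal obstacle is the representability of $\lambda$: this is precisely where the faithfulness hypothesis enters essentially, since without it $P$ retains nontrivial stabilisers and fails to be an algebraic space. Once representability is in hand, the rest of the argument is a matter of threading together cohomological affineness (which the paper has already set up for $c$), Serre's criterion, and the standard equivariant geometry of torsors for diagonalisable groups.
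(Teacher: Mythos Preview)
Your argument is correct and follows essentially the same route as the paper: the paper defines $P$ directly as $\underline{\Spec}_{\bX}\bigoplus_{\vec{n}} \cL_1^{\otimes n_1}\otimes\cdots\otimes\cL_s^{\otimes n_s}$ (your weight-space description of $\pi_*\cO_P$), invokes faithfulness to conclude $P$ is an algebraic space, and then uses the same cohomological-affineness-plus-Serre's-criterion chain to get $P=\underline{\Spec}_X\cB$. Your packaging via the classifying map $\lambda\colon\bX\to B\bG_m^s$ and the stabiliser criterion for representability is just the standard reformulation of the same step.
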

\begin{proof}
Let 
$$P = \underline{\Spec}_{\bX} \bigoplus_{(\chi_1,\ldots,\chi_s)\in \bZ^s} \cL_1^{\otimes \chi_1} \otimes_{\bX} \otimes \cdots \otimes_{\bX} \cL_s^{\otimes \chi_s}.$$
Our assumption that $\oplus \cL_i$ is faithful means that this is an algebraic space. By construction, we have $\bX \simeq [P/\bG_m^s]$. Now $P \to \bX$ is affine and hence, cohomologically affine, whilst $c \colon \bX \to X$ is cohomologically affine so the same is true of the composite $f \colon P \to X$. The algebraic space version of Serre's criterion for affineness \cite[Proposition~3.3]{Alp} ensures that $f$ is actually affine so $P = \underline{\Spec}_X \cB$ for some sheaf of algebras $\cB$. Furthermore, the action of $\bG_m^s$ on $P$ induces a $\bZ^s$-graded structure on $\cB$. 
\end{proof}
The proposition allows us to identify $\Qcoh(\bX)$ with the category $\cB-\textup{Gr}$ of $\bZ^s$-graded $\cB$-modules. Let $\chi  = (\chi_1,\ldots,\chi_s) \in \bZ^s$ which can also be viewed as a character of $\bG_m^s$. By construction, the $\bG_m^s$-equivariant sheaf $\cO_P \otimes_k \chi$ is isomorphic to $\cL_1^{\otimes \chi_1} \otimes_{\bX} \otimes \cdots \otimes_{\bX} \cL_s^{\chi_s}$. Tensoring by $\cO_P \otimes_k \chi$ corresponds to the graded shift by $\chi$ operator $M \mapsto M[\chi]$  on $\cB-\textup{Gr}$. The push forward functor $c_* \colon \cB-\textup{Gr} \to \Qcoh(X)$ from the $\cB$-module viewpoint corresponds to taking the degree 0 part. Hence,
$$ c_* \left(\cL_1^{\otimes \chi_1} \otimes_{\bX} \otimes \cdots \otimes_{\bX} \cL_s^{\otimes \chi_s}\right) \simeq \cB_{\chi} $$
is a coherent sheaf on $X$. 

\begin{theorem}  \label{thm:tautological}
Let $\bX$ be a separated quasi-projective stack and $\cL_1,\ldots, \cL_s$ be line bundles on $\bX$ such that $\cL_1 \oplus \ldots \oplus \cL_s$ is faithful. Suppose that $\cG = \oplus \cG_i$ is a generator for $\bX$ with one summand, say $\cG_0$, of the form $c^* \cV \otimes_\bX \cL$ where $\cV$ is a vector bundle on $\cV$ and $\cL$ lies in the subgroup of the Picard group generated by $\cL_1,\ldots, \cL_s$. Then $\bM^{\cL_1,\ldots\cL_s}_{\cG-\textup{Sky}} \simeq \bX$.
\end{theorem}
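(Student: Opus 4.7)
The plan is to construct mutually inverse morphisms $\Psi \colon \bX \to \bM^{\cL_1,\ldots,\cL_s}_{\cG-\textup{Sky}}$ and $\Phi$ in the opposite direction. For $\Psi$, I take the universal skyscraper family $\Delta_*\cO_\bX$ from Proposition~\ref{prop:universalsky}, equipped with tensor stability data coming directly from the projection formula~(\ref{eq:projsky}), which supplies natural isomorphisms $\cL_i \otimes_\bX \Delta_*\cO_\bX \simeq \Delta_*\cL_i \simeq \Delta_*\cO_\bX \otimes_\bX \cL_i$. The symmetry inherent in~(\ref{eq:projsky}) ensures the strict commutativity of Diagram~(\ref{eq:doublestable}), so this data is a legitimate object of $\bM^{\cL_1,\ldots,\cL_s}_{\cG-\textup{Sky}}$ in the sense of Definition~\ref{defn:doubletensor}.

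For the inverse $\Phi$, I use Proposition~\ref{prop:XisPmodG} to identify $\bX \simeq [P/\bG_m^s]$ with $P = \underline{\Spec}_X \cB$ and $\cB_\chi = c_*\cL^\chi$. A morphism $T \to \bX$ thus amounts to the data of line bundles $\cN_1, \ldots, \cN_s$ on $T$ (forming the $\bG_m^s$-torsor), a morphism $f \colon T \to X$, and a graded $\cO_T$-algebra homomorphism $f^*\cB \to \bigoplus_\chi \cN_\chi$. Starting from $(\cM, \phi_1, \ldots, \phi_s) \in \bM(T)$, the $\cN_i$ already appear in the tensor stability data. The morphism $f$ comes from the skyscraper condition applied to $\cV = \cO_\bX$: combined with Proposition~\ref{prop:cstarflat}, it forces $(c \times \id_T)_*\cM$ to be a coherent sheaf on $X \times T$ flat over $T$ of rank one with support finite over $T$, hence the structure sheaf of a closed subscheme finite flat of degree one over $T$, i.e., the graph of a unique $f \colon T \to X$. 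Finally, iterating the $\phi_i$ produces, for each $\chi \in \bZ^s$, an isomorphism $\phi_\chi \colon \cL^\chi \otimes_\bX \cM \simeq \cM \otimes_T \cN_\chi$; pushing forward to $X \times T$ and dividing out by a local trivialisation of the line bundle $\pi_{2*}\cM$ yields the required graded algebra map. The strict commutativity imposed in Definition~\ref{defn:doubletensor} (as opposed to mere commutativity up to scalar) is precisely what ensures this is a ring homomorphism rather than merely a projective one.

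To verify that $\Psi$ and $\Phi$ are mutually inverse, I reduce to the \'etale local model $\bX \simeq [U/G]$ together with the explicit description of $\Delta_*\cO_\bX$ as the skew group algebra $G \# \cO_U$ given in the paragraph before Proposition~\ref{prop:universalsky}. The composite $\Phi \circ \Psi$ applied to $t \colon T \to \bX$ is easily seen to recover $t$ from this local picture. The harder direction is $\Psi \circ \Phi = \id$, and I expect this to be the main obstacle. Here one must show that any family $(\cM, \phi_i)$ satisfying the skyscraper and tensor stability conditions is locally the pullback of $\Delta_*\cO_\bX$ along the morphism produced by $\Phi$. While the skyscraper conditions pin down the discrete invariants, the essential new content is that the tensor stability rigidifies the action of the (necessarily abelian) stabiliser groups so tightly that only the regular representation $G \# k$ can appear on fibres. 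I plan to handle this via the Morita equivalence of Proposition~\ref{prop:Morita}, translating into a statement about finite-length $\cA$-modules whose $\cL^\chi$-twists are all isomorphic, where the special-shape hypothesis on the summand $\cG_0 = c^*\cV \otimes_\bX \cL^{\chi_0}$ lets me identify the $\cA$-module structure on $\cM$ with the one extracted from the graded $\cB$-module structure delivered by $\Phi$.
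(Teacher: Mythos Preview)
Your overall architecture matches the paper's proof: construct $\Psi$ from $\Delta_*\cO_\bX$ via the projection formula, and construct $\Phi$ by using the presentation $\bX \simeq [P/\bG_m^s]$ of Proposition~\ref{prop:XisPmodG}, extracting from an object $(\cM,\phi_i)$ the torsor data $\cN_i$, a map $\bar f\colon T\to X$, and a graded algebra map $\bar f^*\cB \to \bigoplus_\chi \cN_\chi$. The paper carries out exactly these steps and then declares the verification that $\Psi,\Phi$ are inverse to be ``elementary, though tedious''.

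There is one genuine misplacement in your proposal. You invoke ``the skyscraper condition applied to $\cV=\cO_\bX$'' to get that $(c\times\id_T)_*\cM$ is a line bundle on $T$, but the moduli problem $\bM_{\cG\text{-Sky}}$ only imposes the skyscraper rank condition for the summands $\cG_i$, not for $\cO_\bX$. This is precisely where the hypothesis on $\cG_0=c^*\cV\otimes_\bX\cL$ enters in the paper: Proposition~\ref{prop:Vrankcoarse} converts the $\cG_0$-rank condition into skyscraper $\cL$-rank, and the tensor stability isomorphisms $\phi_i$ then propagate this to skyscraper $\cL'$-rank for every $\cL'\in\langle\cL_1,\ldots,\cL_s\rangle$, in particular $\cO_\bX$. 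You defer the $\cG_0$ hypothesis to the final verification step via Morita equivalence, but it is needed already in the construction of $\bar f$, and once used there it is not needed again.

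Relatedly, your plan for checking $\Psi\circ\Phi\simeq\id$ is more elaborate than necessary. Once $\Phi$ is built, the graded $\cB_T$-module $\cM$ is, by construction, supported on the graph of $\bar f$ and isomorphic to $\cM_{00}\otimes_T\cO_{\tilde T}$ as a graded sheaf on $T$; comparing this with the pullback of $\Delta_*\cO_\bX$ along the resulting $T\to\bX$ is a direct unwinding of definitions, not a fibrewise analysis of which $G$-representations can occur. The ``only the regular representation survives'' phenomenon you anticipate is a \emph{consequence} of the construction, not an input to it.
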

\begin{proof}
To simplify notation, we give the proof for the case $s=2$ and write $\bM:=\bM^{\cL_1,\cL_2}_{\cG-\textup{Sky}}$. The general case is the same and can be obtained by inserting ellipses in appropriate places. 

To construct a morphism $\bX \to \bM$, it suffices to produce a flat family of tensor stable skyscraper sheaves over $\bX$. Proposition~\ref{prop:universalsky} shows that $\Delta_* \cO_{\bX}$ is a flat family of skyscraper sheaves over $\bX$. To show this family is stable under tensoring by $\cL_i$, note that Equation~\ref{eq:projsky} gives a natural isomorphism
$$ \phi_i \colon \cL_i \otimes_{\bX} \Delta_* \cO_{\bX} \xrightarrow{\sim} \Delta_*(\cL_i) \xrightarrow{\sim} \Delta_* \cO_{\bX} \otimes_{\bX} \cL_i .$$
The data of $\Delta_* \cO_{\bX}, \phi_1, \phi_2$ thus defines a morphism from $\bX \to \bM$. 

We now construct the inverse morphism $\Phi \colon \bM \to \bX$. Recall that $\bM$ is the stackification of a pre-stack $\bM^{pre}$ whose category of sections over a test scheme $T$ can be defined as follows. An object of $\bM^{pre}(T)$ consists of a flat family of skyscraper sheaves $\cS$ with respect to $\cL_1,\cL_2$ over $T$, and isomorphisms (expressed using Notation~\ref{notn:bimodtensor}) 
$\phi_i \colon \cL_i \otimes_{\bX} \cS \simeq \cS \otimes_T \cN_i$, for line bundles $\cN_1,\cN_2$ on $T$. Our assumption on $\cG$ and Proposition~\ref{prop:Vrankcoarse} ensures that $\cS$ has skyscraper $\cL$-rank, and the isomorphisms $\phi_i$ now ensure that $\cS$ also has skyscraper $\cL'$-rank for any $\cL' \in \langle \cL_1,\ldots, \cL_s \rangle$. 

We use Proposition~\ref{prop:XisPmodG} to view $\bX$ as the quotient stack $[P/\bG_m^2]$ where $P = \underline{\Spec}_X \cB$ for some $\bZ^2$-graded sheaf of algebras $\cB$ on $X$. 
If $\cB_T$ denotes the pullback of $\cB$ to $X \times T$, then we may view $\cS$ as a $\bZ^2$-graded $\cB_T$-module and the $\phi_i$ become isomorphisms of the form 
\begin{equation}
\phi_1 \colon \cS[1,0] \simeq \cS \otimes_T \cN_1, \quad 
\phi_1 \colon \cS[0,1] \simeq \cS \otimes_T \cN_2.
\label{eq:Sshiftstable}
\end{equation}
We wish to define the object $\Phi(\cS, \phi_1,\phi_2) \in \bX(T) = [P/\bG_m^2](T)$ which will be a diagram of the form
$$
\begin{CD}
\tilde{T} @>{f}>>  P \\
@V{\pi}VV @. \\
T @. 
\end{CD}
$$
where $\pi\colon \tilde{T} \to T$ is a $\bG_m^2$-torsor and $f\colon \tilde{T} \to P$ is a $\bG_m^2$-equivariant morphism. 
We define 
$$ \cO_{\tilde{T}} = \bigoplus_{\chi_1,\chi_2 \in \bZ} \cN_1^{\otimes \chi_1} \otimes_T \cN_2^{\otimes \chi_2} \quad \text{and} \quad  \tilde{T} = \underline{\Spec}_T \cO_{\tilde{T}}$$
which is naturally a $\bG_m^2$-torsor over $T$. 

To define $f$, we will first need to define the induced map on coarse moduli schemes $\bar{f} \colon T \to X$. The isomorphisms (\ref{eq:Sshiftstable}) ensure that the sheaves $\cS_{\chi_1\chi_2} \in \Coh(X \times T)$ are all isomorphic, so in particular, have the same support $Z \subseteq X \times T$. Now $\cS$ is a flat family of skyscraper sheaves relative to $\cG$ so we know the projection map $\phi \colon Z \to T$ is a finite map and, as remarked above, $\phi_* \cS_{i_1i_2}$ are line bundles on $T$. The canonical morphism of $\cO_T$-algebras $\cO_Z \xrightarrow{\rho} \underline{\cEnd}_{\cO_T} \cS_{00} \simeq \cO_T$ splits the identity on $\cO_T$. Now $\rho$ is injective by definition of support, so $\phi \colon Z \to T$ is an isomorphism. We may now define $\bar{f}$ to be the composite $\bar{f} = \pi_1 \circ \phi^{-1}$ where $\pi_1 \colon Z \hookrightarrow X \times T \to X$ is projection onto the first factor. Note that $\cS$ is supported on the graph of $\bar{f}$ so, as a sheaf, is completely determined by $\bar{f}$ and its structure as a sheaf on $T$. 

We will define $f$ by constructing a morphism of $\bZ^2$-graded sheaves of algebras $\psi \colon \bar{f}^* \cB \to \cO_{\tilde{T}}$. Note first that the isomorphisms (\ref{eq:Sshiftstable}) show that 
$\cS \simeq \cS_{00} \otimes_T \cO_{\tilde{T}}$ as $\bZ^2$-graded sheaves on $T$. Let $\cE_{\chi}$ be the sheaf of (degree 0) graded homomorphism of sheaves $\cS \to \cS[-\chi]$ on $T$. Note that composition induces a natural algebra structure on $\cE =  \oplus_{\chi \in \bZ^2} \cE_{\chi}$. Furthermore, right multiplication on $\cS$ induces an injective  $\bZ^2$-graded homomorphism of sheaves of algebras $\cO_{\tilde{T}} \hookrightarrow \cE$. Left multiplication also induces a graded morphism of algebras $\psi \colon \cB \to \cE$, and will define our map $f$, once we show its image lies in $\cO_{\tilde{T}}$. This follows from the fact that the isomorphisms in (\ref{eq:Sshiftstable}) are isomorphisms of $\cB_T$-modules and the theory of endomorphisms compatible with shifts as explained in \cite[Section~3]{C12}. 

This completes the definition of $\Phi(\cS, \phi_1,\phi_2)$. It is now elementary, though tedious, to verify that a) this defines a morphism of pre-stacks $\bM^{pre} \to \bX$ and hence, morphism of stacks $\Phi \colon \bM \to \bX$, and that b) $\Phi$ is inverse to the morphism $\bX \to \bM$ we constructed using the universal skyscraper sheaf $\Delta_* \cO_{\bX}$. 
\end{proof}

\section{Tensor stable moduli of representations}\label{sc:tensor_stable}

In this section, we use the technology of tensor stable moduli stacks to give modular realisations of some derived equivalences. To this end, we consider a finite dimensional algebra $A$. The key case is when $A$ is the endomorphism algebra of a tilting bundle $\cT$ on a separated smooth projective stack $\bX$. The basic idea is to use tilting theory to transfer the tautological moduli problem on $\Qcoh(\bX)$ to a corresponding moduli problem on  $\textup{mod}-A$.

We assume throughout that $A$ has finite global dimension. Fix a {\em dimension vector} $\vec{d} \in K_0(A)$. If we present $A$ as a quiver with relations so that $A \simeq kQ/I$ for some quiver $Q = (Q_0,Q_1)$ and some admissible ideal $I\triangleleft kQ$, then $\vec{d}$ can be viewed as an element of $\bZ^{Q_0}$. Let $\bM_{\vec{d}}$ denote the {\em rigidified moduli stack of $A$-modules with dimension vector $\vec{d}$}. It is an Artin stack of finite type, an elementary description of which can be found in \cite[Section~2]{CL}.

Let $L_1,\ldots, L_s$ be two-sided partial tilting complexes on $A$, that is, they induce auto-equivalences 
$$- \otimes^L_A L_i \colon D^b_{fg}(A) \xrightarrow{\sim} D^b_{fg}(A)$$
where $D^b_{fg}(A)$ denotes the bounded derived category of finitely generated $A$-modules. This functor induces a partially defined map $\lambda_i \colon \bM_{\vec{d}} \dashto \bM_{\vec{d}}$ as follows. Let $\cM$ be a flat family of $A$-modules over a test scheme $T$ with dimension vector $\vec{d}$. By \cite[Proposition~3.3]{CL} (the proof given there for $L_i = DA$ works more generally in this setting), there is a locally closed subscheme $T^\circ \subseteq T$ which is the locus where a) $H_p(\cM \otimes^L_A L_i) = 0$ for $p\neq 0$ and b), $H_0(\cM \otimes^L_A L_i)$ is flat over $T^\circ$ with dimension vector $\vec{d}$. Furthermore, by \cite[Lemma~3.2]{CL}, we have
$$ H_p(\cM|_{T^\circ} \otimes^L_A L_i) \simeq H_p(\cM \otimes^L_A L_i)|_{T^\circ} .$$
As one varies $\cM$ and $T$, the locally closed subscheme $T^\circ$ determines a locally closed substack $\bM_{\vec{d}}^\circ \subseteq \bM_{\vec{d}}$. We conclude
\begin{proposition}  \label{prop:lambdamap}
The functor $- \otimes_A L_i \colon \textup{mod}-A \to \textup{mod}-A$ induces a partially defined map $\lambda_i \colon \bM_{\vec{d}} \dashto \bM_{\vec{d}}$ with domain of definition $\bM_{\vec{d}}^\circ$ in the notation above. We let $\bM_{\vec{d}}^{L_1,\ldots,L_s}$ denote the fixed point stack of these partially defined maps and call it the {\em tensor stable moduli stack} of $A$-modules with dimension vector $\vec{d}$ with respect to $L_1,\ldots, L_s$. 
\end{proposition}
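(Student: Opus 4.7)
The plan is to adapt the construction of partially defined self-maps on $\bM_{\vec d}$ found in \cite{CL}, which was carried out there only for the specific case $L_i = DA$ (the inverse Serre functor), to the setting of an arbitrary two-sided tilting complex. The whole proposition is really a combined existence statement and definition, so the work consists of checking that the cited results extend to general $L_i$ and that the iterated fixed-point construction goes through mutatis mutandis.

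First, given a flat family $\cM \in \bM_{\vec d}(T)$ of $A$-modules and a two-sided tilting complex $L_i$, I would form $\cM \otimes^L_A L_i$ as an object of $D^b(A\text{-mod} \otimes \cO_T)$. The argument of \cite[Proposition~3.3]{CL} uses only that $L_i$ is a perfect complex of $A$-bimodules and that the derived tensor functor is an auto-equivalence on $D^b_{fg}(A)$; both are built into the definition of a two-sided tilting complex. Transcribing that proof produces the locally closed subscheme $T^\circ \subseteq T$ cut out by the conditions (a) $H_p(\cM \otimes^L_A L_i) = 0$ for $p \neq 0$ and (b) $H_0(\cM \otimes^L_A L_i)$ flat with fibrewise dimension vector $\vec d$. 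Condition (a) is an open condition from upper-semicontinuity of cohomology of a perfect complex, and (b) is a closed condition inside it, so $T^\circ$ is indeed locally closed.

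Second, on $T^\circ$ I would set $\lambda_i(\cM) := H_0(\cM|_{T^\circ} \otimes^L_A L_i)$. To see that this gives a morphism of stacks $\bM^\circ_{\vec d} \to \bM_{\vec d}$, I need base-change compatibility along $g \colon T' \to T$, namely $g^* H_0(\cM \otimes^L_A L_i) \simeq H_0(g^* \cM \otimes^L_A L_i)$ together with the statement that $g^{-1}(T^\circ)$ is the corresponding locus for $g^* \cM$. Both follow directly from \cite[Lemma~3.2]{CL}, which requires only flatness of $\cM$ over $T$ and perfectness of $L_i$. This compatibility also ensures that the loci $T^\circ$ glue to a locally closed substack $\bM^\circ_{\vec d} \subseteq \bM_{\vec d}$, completing the construction of the partially defined map $\lambda_i$.

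Third, for the definition of the tensor stable moduli stack itself, I would mimic Section~\ref{sc:equaliserstack} and Definition~\ref{defn:doubletensor}: iteratively form the fixed point stacks $\bM_{\vec d}^{L_1}, \, (\bM_{\vec d}^{L_1})^{L_2}, \ldots$, using the canonical natural isomorphisms $L_i \otimes^L_A L_j \simeq L_j \otimes^L_A L_i$ (arising from the fact that the auto-equivalences $-\otimes^L_A L_i$ commute up to canonical isomorphism) to pass each tensoring map through the previously formed fixed-point stack, and finally cut out the locus where the resulting diagrams commute on the nose rather than merely up to scalar via the fibre product with $1 \hookrightarrow \bG_m$. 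The main obstacle I anticipate is purely bookkeeping: verifying that \cite[Proposition~3.3 and Lemma~3.2]{CL} do not secretly invoke structure special to $DA$, and that the pentagon of scalar corrections in the iterated construction behaves identically to the $s=2$ case treated in Proposition~\ref{prop:doublestable}. Both should be routine once one unwinds the definitions.
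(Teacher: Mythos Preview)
Your proposal is correct and follows the paper's own approach essentially verbatim: the paper's ``proof'' is the paragraph immediately preceding the proposition, which cites \cite[Proposition~3.3]{CL} (noting explicitly that the argument there for $L_i = DA$ carries over unchanged) and \cite[Lemma~3.2]{CL} for base change, then declares the fixed-point construction of Section~\ref{sc:equaliserstack} to go through.

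One small caution: your parenthetical that the auto-equivalences $-\otimes^L_A L_i$ ``commute up to canonical isomorphism'' is not true for arbitrary two-sided tilting complexes, since the derived Picard group of $A$ need not be abelian. The iterated fixed-point construction of Definition~\ref{defn:doubletensor} genuinely requires such commutativity isomorphisms to make $-\otimes^L_A L_j$ descend to a self-map of $\bM_{\vec d}^{L_i}$. The paper is silent on this point at the level of the proposition, but in every application the $L_i$ arise from line bundles $\cL_i$ on $\bX$ via a derived equivalence, and the commutativity is inherited from $\cL_i \otimes \cL_j \simeq \cL_j \otimes \cL_i$. You should either add this as a standing hypothesis on the $L_i$ or note that the definition is made in the context where such isomorphisms are given as part of the data.
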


The algebras of interest are those arising from tilting theory. We thus suppose that $\bX$ is a separated smooth projective stack with a tilting bundle $\cT$ which we decompose into indecomposables $\cT = \oplus_{i \in Q_0} \cT_i$. The finite dimensional algebra $A = \End_{\bX}{\cT}$ has finite global dimension and has the form $A \simeq kQ/I$ where the vertex set of the quiver $Q$ is $Q_0$. We view $\cT$ as an $(A,\cO_{\bX})$-bimodule. 
\begin{proposition}  \label{prop:Tgenerates}
The tilting bundle $\cT$ is a generating sheaf for $\bX$.
\end{proposition}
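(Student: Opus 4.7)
The plan is to use the pointwise geometric criterion for generating sheaves recorded after the definition (from \cite[Theorem 5.2]{OS}): it suffices to show that for every geometric point $\xi \colon \Spec k \to \bX$, the $kG_\xi$-module $\xi^*\cT$ generates $\textup{Mod-}kG_\xi$. Since we work in characteristic $0$, $G_\xi$ is linearly reductive and $kG_\xi$ is semisimple, so this is equivalent to showing that every irreducible representation of $G_\xi$ appears as a direct summand of $\xi^*\cT$.

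I would argue by contradiction. Suppose there is a geometric point $\xi$ and an irreducible $kG_\xi$-representation $\rho$ which is not a summand of $\xi^*\cT$. Let $\iota \colon BG_\xi \hookrightarrow \bX$ denote the residue gerbe at $\xi$, which is a closed immersion of stacks, and consider the coherent sheaf $\iota_*\rho$ on $\bX$. I then intend to compute $\RHom_{\bX}(\cT,\iota_*\rho)$ and show that it vanishes, which will contradict the fact that the tilting bundle $\cT$ classically generates $D^b(\bX)$ (any nonzero object $\cF \in D^b(\bX)$ has nonzero $\RHom(\cT,\cF)$, as follows from the tilting equivalence $\RHom_\bX(\cT,-)\colon D^b(\bX) \xrightarrow{\sim} D^b_{fg}(A)$).

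For the vanishing, since $\cT$ is locally free we have $L\iota^*\cT = \iota^*\cT$, and adjunction together with the fact that $\iota$ is a closed immersion yields
\[
\RHom_{\bX}(\cT,\iota_*\rho) \simeq \RHom_{BG_\xi}(\iota^*\cT,\rho).
\]
Because $BG_\xi$ is cohomologically affine in characteristic $0$ (equivalently, $kG_\xi$ is semisimple), all higher $\Ext$'s on $BG_\xi$ vanish, so
\[
\RHom_{BG_\xi}(\iota^*\cT,\rho) \simeq \Hom_{G_\xi}(\xi^*\cT,\rho).
\]
By Schur's lemma, the right-hand side is zero precisely because $\rho$ is not a summand of $\xi^*\cT$, yielding the desired contradiction.

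The only mildly delicate point is the identification of $\RHom_\bX(\cT,\iota_*\rho)$ with $\Hom_{G_\xi}(\xi^*\cT,\rho)$, which rests on $\iota$ being a representable closed immersion together with the vanishing of higher cohomology on $BG_\xi$ in characteristic $0$; both of these are standard for tame DM stacks in our setting (as recorded earlier via the appeal to \cite{AOV} and Alper's formalism). Once this is in hand, the proof is essentially a one-line application of the tilting generation property.
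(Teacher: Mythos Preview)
Your proof is correct and follows essentially the same route as the paper: both arguments invoke the pointwise criterion, push forward an irreducible representation from the residual gerbe $BG_\xi$ (the paper writes $[\text{pt}/\Aut(x)]$), use the $(\iota^*,\iota_*)$-adjunction together with semisimplicity of $kG_\xi$ to identify $\RHom_\bX(\cT,\iota_*\rho)$ with $\Hom_{G_\xi}(\xi^*\cT,\rho)$, and then appeal to the fact that $\cT$ generates $D^b(\bX)$ to force this to be nonzero. The only cosmetic difference is that the paper explicitly records $\iota^*\iota_*=\id$ to certify $\iota_*\rho\neq 0$, a point you use implicitly when invoking generation.
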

\begin{proof}
Fix a geometric point $x \in \bX$. 
The stack $[\text{pt}/\Aut(x)]$ may be presented using the Cartesian square 
\begin{equation*}
\begin{CD}
[\text{pt}/\Aut(x)] @>x>> \bX\\
@VVV @VVcV\\
\text{pt} @>x>> X.
\end{CD}
\end{equation*} 
The morphism $x$ is a closed embedding and hence $x \colon [\text{pt}/\Aut(x)] \rightarrow \bX$ is too. 
We then have that $x^* \circ x_* = \text{id}$ and so $x_*: D^b([\text{pt}/\Aut(x)]) \rightarrow D^b(\bX)$ is full and faithful. 
Let $\cV$ be a vector bundle on $[\text{pt}/\Aut(x)]$ given by some irreducible representation. 
Since $\Coh([\text{pt}/\Aut(x)])$ is semi-simple it suffices to show that $\Hom^\bullet(x^* \cT, \cV) \neq 0$. 
By adjuction we have $\Hom^\bullet(x^* \cT, \cV) = \Hom^\bullet(\cT, x_* \cV).$ 
Since $\cT$ generates the derived category we have $\Hom^\bullet(\cT, x_* \cV)= 0$ if and only if $x_* \cV = 0$. 
However this can not be true since that would imply $x^* \, x_* \cV = 0$ and hence $\cV =0$. 
\end{proof}
Let $\Phi = \RHom_{\bX}(\cT,-) \colon D^b_{c}(\bX) \to D^b_{fg}(A)$ denote the derived equivalence induced by $\cT$. 

Given line bundles $\cL_1,\ldots, \cL_s \in \Pic \bX$, we may thus consider the tensor stable moduli stack $\bM_{\cT}^{\cL_{\bullet}} := \bM_{\cT-Sky}^{\cL_1,\ldots,\cL_s}$. We now introduce the corresponding moduli stack on $\textup{mod}-A$ as follows. Firstly, we let $\vec{d}\in \bZ^{Q_0} = K_0(A)$ be defined by $d_i = \rank \cT_i$ so that for any skyscraper sheaf $\cS$ on $\bX$ relative to $\cT$, we have $\Phi(\cS)$ is an $A$-module with dimension vector $\vec{d}$. Consider the auto-equivalences
$$ \Phi \circ (\cL_i \otimes^L_{\bX} -) \circ \Phi^{-1} \colon D^b_{fg}(A) \to D^b_{fg}(A)$$
which by Rickard \cite{MR1002456} are naturally isomorphic to $- \otimes^L_A L_i$ for some two-sided partial tilting complexes $L_i$. We thus also have another tensor stable moduli stack $\bM_{\vec{d}}^{L_{\bullet}} := \bM_{\vec{d}}^{L_1,\ldots,L_s}$. 

We now examine how $\Phi = \RHom_{\bX}(\cT, -)$ induces a morphism $\phi \colon \bM_{\cT-Sky} \to \bM_{\vec{d}}$ and hence morphism $\bM_{\cT-Sky}^{\cL_{\bullet}} \to \bM_{\vec{d}}^{L_{\bullet}}$ which we also denote by $\phi$. Let $\cS$ be a flat family of skyscraper sheaves over $T = \Spec R$ relative to $\cT$ where $R$ is a noetherian ring. We use the bimodule Notation~\ref{notn:bimodtensor} for $\cS$ below. Following Grothendieck (see for example \cite[Chapter~III, Section~12]{Hart}), consider the functors 
$$ \phi^p := H^p(\cT^{\vee} \otimes_{\bX} \cS \otimes_R - ) \colon 
\textup{mod}-R \to \textup{mod}-(A \otimes R).$$
Now $\cT^{\vee}\otimes_{\bX} \cS$ has finite support over $R$ so $\phi^p = 0$ for $p >0$ and $\phi^0$ is exact. By \cite[Proposition~12.5]{Hart} and Remark~\ref{rem:Morita}, it follows that the natural transformation
$$ H^0(\cT^{\vee} \otimes_{\bX} \cS) \otimes_R (-) \to  \phi^0$$
is an isomorphism and hence $H^0(\cT^{\vee} \otimes_{\bX} \cS)$ is flat over $R$. Thus $\Phi$ is compatible with base change and we conclude 
\begin{proposition}  \label{prop:transfermap}
There is a well-defined morphism of stacks $\bM_{\cT-Sky} \to \bM_{\vec{d}}$ defined by the functor $\cS \mapsto \Phi(\cS) = H^0(\cT^{\vee} \otimes_{\bX} \cS)$ which induces the stack morphism $\phi \colon \bM_{\cT-Sky}^{\cL_{\bullet}} \to \bM_{\vec{d}}^{L_{\bullet}}$.
\end{proposition}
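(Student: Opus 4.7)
The statement really has two parts: (a) the assignment $\cS \mapsto \Phi(\cS)$ yields a morphism $\bM_{\cT-\text{Sky}} \to \bM_{\vec{d}}$, and (b) this morphism respects the tensor stability data. Most of the essential technical content for (a) already appears in the paragraph preceding the proposition, so the plan is to package these observations cleanly and then do the additional bookkeeping needed to pass to tensor stable moduli.

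For part (a), the plan is to verify the three conditions defining a section of $\bM_{\vec{d}}$ over a test scheme. Working Zariski locally on $T = \Spec R$, we take $\Phi(\cS) = H^0(\cT^{\vee} \otimes_{\bX} \cS)$ and read off from the isomorphism $H^0(\cT^{\vee} \otimes_{\bX} \cS)\otimes_R (-) \simeq \phi^0$ both that $\Phi(\cS)$ is $R$-flat and that its formation commutes with arbitrary base change in $T$. The $i$-th component of the dimension vector is computed fibre by fibre: restricting to a geometric point of $T$ reduces to $\dim_k \Hom_{\bX}(\cT_i, \cS|_t)$, which equals the $\cT_i$-rank of $\cS|_t$, namely $\rank \cT_i = d_i$ by the skyscraper hypothesis on $\cS$. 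Because $\Phi$ sends a twist $\cS \otimes_T \cN$ of $\cS$ by a line bundle on $T$ to the corresponding twist $\Phi(\cS) \otimes_T \cN$ of $\Phi(\cS)$, the construction descends through the rigidifications on both sides.

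For part (b), the plan is to transfer tensor stability data along $\Phi$ and then verify the coherence axiom. Given an object $(\cS, \phi_1, \ldots, \phi_s)$ of $\bM_{\cT-\text{Sky}}^{\cL_{\bullet}}(T)$, applying $\Phi$ to each $\phi_i\colon \cL_i \otimes_{\bX} \cS \simeq \cS \otimes_T \cN_i$ and invoking the natural isomorphism $\Phi \circ (\cL_i \otimes^L_{\bX} -) \simeq (- \otimes^L_A L_i) \circ \Phi$ (which, by construction of $L_i$ via Rickard's theorem, is how the $L_i$ are defined) produces isomorphisms in $D^b_{fg}(A \otimes R)$
\[
\Phi(\cS) \otimes^L_A L_i \;\simeq\; \Phi(\cL_i \otimes_{\bX} \cS) \;\stackrel{\Phi(\phi_i)}{\simeq}\; \Phi(\cS \otimes_T \cN_i) \;\simeq\; \Phi(\cS) \otimes_T \cN_i.
\]
The right-hand side is concentrated in degree zero and flat over $T$ with dimension vector $\vec{d}$, so by the characterisation of the locus of definition $\bM_{\vec{d}}^\circ$ in Proposition~\ref{prop:lambdamap}, the family $\Phi(\cS)$ lies in the domain of definition of each partially defined map $\lambda_i$ and the above isomorphism is the required stability datum for $\lambda_i$.

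Finally, it remains to check the coherence diagram~(\ref{eq:doublestable}) is preserved. Since $\Phi$ is an equivalence of categories of modules (and in particular faithful), and since the natural isomorphisms $\cL_i \otimes_{\bX} \cL_j \simeq \cL_j \otimes_{\bX} \cL_i$ transport under $\Phi$ to the corresponding natural isomorphisms $L_i \otimes^L_A L_j \simeq L_j \otimes^L_A L_i$, the on-the-nose commutativity of diagram~(\ref{eq:doublestable}) for $(\cS, \phi_1, \ldots, \phi_s)$ is carried to on-the-nose commutativity of the analogous diagram for $(\Phi(\cS), \Phi(\phi_1), \ldots, \Phi(\phi_s))$. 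Thus $\phi$ lands in the fibre over $1 \in \bG_m$ and defines the desired morphism $\bM_{\cT-\text{Sky}}^{\cL_{\bullet}} \to \bM_{\vec{d}}^{L_{\bullet}}$. The main obstacle throughout is a minor one, namely ensuring that the derived functors all land in degree zero so that the argument can be carried out at the level of honest modules; this is precisely where the finite support of $\cT^{\vee} \otimes_{\bX} \cS$ (forcing $\phi^p = 0$ for $p > 0$) and the skyscraper hypothesis are used.
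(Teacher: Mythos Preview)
Your proposal is correct and follows the same approach as the paper; in fact, the paper's entire argument is contained in the paragraph immediately preceding the proposition (the Grothendieck-style base-change argument via the functors $\phi^p$), and the proposition is stated there as a conclusion without a separate proof environment. Your part~(b), spelling out how the tensor stability data and the coherence diagram~(\ref{eq:doublestable}) transport along $\Phi$, is more detailed than what the paper writes (the paper simply says ``and hence morphism $\bM_{\cT-\mathrm{Sky}}^{\cL_{\bullet}} \to \bM_{\vec{d}}^{L_{\bullet}}$''), but it is exactly the routine verification one would expect and is correctly carried out.
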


We wish now to show that the (quasi-)inverse functor 
$$\Psi:=(-) \otimes^L_{A} \cT \colon D^b_{fg}(A) \to D^b_{c}(\bX)$$
induces an inverse morphism $\psi \colon \bM_{\vec{d}}^{L_{\bullet}} \to \bM_{\cT-Sky}^{\cL_{\bullet}}$ to $\phi$. We need the following ampleness assumption: there is some tensor product $\cL^{\vec{i}} := \cL_1^{\otimes i_1} \otimes_{\bX} \ldots \otimes_{\bX} \cL_s^{\otimes i_s}$ such that the triple $(\Coh(\bX), \cT, (-)\otimes_{\bX} \cL^{\vec{i}})$ is ample in the sense of \cite{AZ94}. In this case we say more briefly that $(\cT, \cL_{\bullet})$ is {\em ample}.

\begin{theorem}  \label{thm:stackfromquiver}
Let $\cT = \oplus \cT_i$ be a tilting bundle on a smooth separated projective stack $\bX$ and $\cL_1, \ldots, \cL_s$ be line bundles such that $(\cT, \cL_{\bullet})$ is ample. We furthermore assume that one of the summands $\cT_i$ is isomorphic to $c^* \cV \otimes \cL$ for some vector bundle $\cV$ on $X$ and line bundle $\cL$ in the group generated by the $\cL_i$. Let $A = \End_{\bX} \cT$ and $L_1, \ldots, L_s$ be the two-sided partial tilting complexes above which correspond to $\cL_1,\ldots, \cL_s$. Then there is an isomorphism of stacks $\bX \simeq \bM^{L_{\bullet}}_{\vec{d}}$ where $\vec{d} \in K_0(A)$ is given by the rank vector of $\cT$. Furthermore, the isomorphism is given by the universal object defined by the bimodule $\ _{\cO_{\bX}}\cT^{\vee}_A$ together with some isomorphisms $ \cL_i \otimes_{\bX} \cT^{\vee} \simeq \cT^{\vee} \otimes_A^L L_i$. 
\end{theorem}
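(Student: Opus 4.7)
The strategy is to combine the tautological moduli description $\bX \simeq \bM^{\cL_\bullet}_{\cT-\text{Sky}}$, obtained by applying Theorem~\ref{thm:tautological} with generator $\cG = \cT$, with the derived-equivalence transfer $\phi: \bM^{\cL_\bullet}_{\cT-\text{Sky}} \to \bM_{\vec{d}}^{L_\bullet}$ of Proposition~\ref{prop:transfermap}, and to show the composite is an isomorphism. The hypotheses of Theorem~\ref{thm:tautological} are readily checked: $\cT$ generates by Proposition~\ref{prop:Tgenerates}, the summand hypothesis on $\cT_i$ is assumed, and $(\cT,\cL_\bullet)$-ampleness forces $\langle \cL_1, \ldots, \cL_s\rangle \leq \Pic\bX$ to act faithfully on every geometric stabiliser, since otherwise no Artin-Zhang type resolution by $\cL$-twists of $\cT$ could resolve the full range of equivariant sheaves over a stacky point. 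Composing with $\phi$ and pushing the universal family $(\Delta_*\cO_\bX, \phi_i)$ through $\Phi = \RHom_\bX(\cT, -)$---where $\phi_i$ is the isomorphism of Equation~(\ref{eq:projsky})---one obtains a morphism $\bX \to \bM_{\vec{d}}^{L_\bullet}$ classified precisely by the bimodule ${}_{\cO_\bX}\cT^\vee_A$ equipped with the asserted isomorphisms $\cL_i \otimes_\bX \cT^\vee \simeq \cT^\vee \otimes^L_A L_i$.

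To show this morphism is an isomorphism, I would construct an inverse $\psi: \bM_{\vec{d}}^{L_\bullet} \to \bM^{\cL_\bullet}_{\cT-\text{Sky}}$ using the quasi-inverse $\Psi = (-)\otimes^L_A \cT$. Given a $T$-point $(\cM, \tau_1, \ldots, \tau_s)$ of $\bM_{\vec{d}}^{L_\bullet}$ with tensor stability data $\tau_i \colon \cM \otimes^L_A L_i \simeq \cM \otimes_T \cN_i$, one forms $\Psi(\cM) := \cM \otimes^L_A \cT \in D^b(\bX \times T)$; the $\tau_i$ transport under $\Psi$ to isomorphisms $\cL_i \otimes_\bX \Psi(\cM) \simeq \Psi(\cM) \otimes_T \cN_i$. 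One must verify that $\Psi(\cM)$ is concentrated in cohomological degree $0$, is $T$-flat, and realises a flat family of skyscraper sheaves relative to $\cT$ with the correct $\cT_i$-ranks. Pointwise over $T$ this reduces to showing that any $S^\bullet \in D^b_c(\bX)$ satisfying $\cL_i \otimes_\bX^L S^\bullet \simeq S^\bullet$ with $\Phi(S^\bullet)$ concentrated in degree $0$ of dimension $\vec d$ is in fact a skyscraper sheaf in degree $0$ with $\cT_i$-rank $d_i$.

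The main obstacle is establishing this pointwise claim and propagating it to families, and the Artin-Zhang ampleness of $(\cT,\cL_\bullet)$ is decisive. Ampleness allows one to detect the support dimension of a coherent sheaf $\cF$ via the growth of $\Hom_\bX(\cT, \cF \otimes_\bX \cL^{\vec{n}})$, while the tensor invariance $\cL_i \otimes_\bX^L S^\bullet \simeq S^\bullet$ forces $\Hom^\bullet_\bX(\cT, S^\bullet \otimes_\bX \cL^{\vec n}) \simeq \Phi(S^\bullet)$ to be independent of $\vec n$ and finite-dimensional, so every cohomology sheaf of $S^\bullet$ must have zero-dimensional support; a dimension count against the fixed $\vec d$ then forces $S^\bullet$ into a single degree with the claimed $\cT_i$-ranks. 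Upgrading to families combines the base-change argument in the proof of Proposition~\ref{prop:transfermap} applied to $\Psi$ with Proposition~\ref{prop:cstarflat} to yield flatness and the correct ranks of $\Psi(\cM)$. Once $\psi$ is defined, the identities $\phi\circ\psi \simeq \id$ and $\psi\circ\phi \simeq \id$ follow formally because both morphisms are constructed functorially from the mutually quasi-inverse $\Phi, \Psi$ and respect the tensor stability data, completing the isomorphism $\bX \simeq \bM_{\vec{d}}^{L_\bullet}$.
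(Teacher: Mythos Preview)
Your overall strategy is exactly the paper's: invoke Theorem~\ref{thm:tautological} with $\cG=\cT$ to identify $\bX\simeq\bM^{\cL_\bullet}_{\cT-\textup{Sky}}$, then show that $\phi$ is an isomorphism by constructing an inverse $\psi$ from the quasi-inverse $\Psi=(-)\otimes^L_A\cT$. The identification of the universal object with $\cT^\vee$ is also the same.

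The divergence, and the gap, is in how you show $\cP:=\cM\otimes^L_A\cT$ is a flat family of skyscraper sheaves over $T$. The paper does \emph{not} argue pointwise and then globalise. It works directly over $\bX\times T$: from the tensor stability data one has $\RHom_\bX(\cT,\cP\otimes\cL^{\vec j})\simeq\cN^{\vec j}\otimes_T\cM$, a genuine module for every $\vec j$. Feeding this into the hypercohomology spectral sequence
\[
R^pF_n\,H^q(\cP)\ \Rightarrow\ H^{p+q}\bigl(\RHom_\bX(\cT,\cP\otimes\cL^{n\vec i})\bigr),
\]
ampleness kills the higher $R^pF_n$ for $n\gg0$, so the collapsed sequence forces $F_n(H^q(\cP))=0$ for $q\neq0$, whence $H^q(\cP)=0$ by Artin--Zhang ampleness again. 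Flatness of $\cP$ over $T$ then comes from flatness of the truncated Serre module $\bigoplus_{n\gg0}\cN^{n\vec i}\otimes_T\cM$, and finite length from the boundedness of its Hilbert function.

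Your proposed route---prove the statement pointwise, then ``upgrade to families'' via the base-change argument of Proposition~\ref{prop:transfermap} and Proposition~\ref{prop:cstarflat}---does not work as stated. The argument in Proposition~\ref{prop:transfermap} hinges on the fact that $\cT^\vee\otimes_\bX\cS$ has \emph{finite support over $T$}, which is what kills the higher $\phi^p$; for $\Psi(\cM)$ you do not know this a priori, so that argument cannot be transported to $\Psi$. Proposition~\ref{prop:cstarflat} concerns pushing forward an already-flat family along $c\times\id_T$ and is not relevant to establishing flatness of $\cP$ in the first place. Your pointwise claim is essentially correct (and is the paper's spectral-sequence argument specialised to a point; calling it a ``dimension count'' undersells what is needed), and one \emph{can} then globalise via the standard lemma that a bounded complex whose derived fibres are all concentrated in degree~$0$ is itself a $T$-flat sheaf in degree~$0$. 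But you should either cite that lemma explicitly or, more simply, run the spectral sequence over the family as the paper does.
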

\begin{proof}
We first show that the morphism of stacks $\phi \colon \bM_{\cT-Sky}^{\cL_{\bullet}} \to \bM_{\vec{d}}^{L_{\bullet}}$ is an isomorphism in this case. Now $\phi$ is given by the equivalence $\RHom_{\bX}(\cT, -)$ so it suffices to show that the inverse equivalence $(-) \otimes^L_A \cT$ induces a well-defined morphism of stacks $\psi \colon \bM_{\vec{d}}^{L_{\bullet}} \to \bM_{\cT-Sky}^{\cL_{\bullet}}$. 

Let $T$ be a noetherian affine test scheme and consider an object of $\bM_{\vec{d}}^{L_{\bullet}}$ given by a flat family of $A$-modules $\cM$ over $T$ and isomorphisms $\theta_i \colon \cM \otimes_A^L L_i \xrightarrow{\sim} \cN_i \otimes_T \cM$ for some line bundles $\cN_i$ on $T$. Let $\cP := \cM \otimes_A^L \cT$ so for any $\vec{j} \in \bZ^s$ we have
$$\RHom_{\bX}(\cT, \cP \otimes \cL^{\otimes \vec{j}}) \simeq \cN^{\otimes \vec{j}} \otimes_T \cM \in \textup{Mod}-A.$$
Note that $\cP \in D^{\leq 0}$ is a bounded complex since $A$ has finite global dimension. Let $F_n = \Hom_{\bX}(\cT, - \otimes_{\bX} \cL^{\otimes n \vec{i}})$ where $\vec{i}$ is chosen so that $(\Coh(\bX), \cT, (-)\otimes_{\bX} \cL^{\vec{i}})$ is ample. Consider the hypercohomology spectral sequence is 
$$ R^pF_n H^q(\cP) \Rightarrow H^{p+q}(\RHom_{\bX}(\cT, \cP \otimes \cL^{\otimes n\vec{i}})).$$
For $n\gg 0$, the spectral sequence collapses to show that $F_n(H^q(\cP)) = 0$ for all $q \neq 0$. In particular, ampleness ensures that $\cP$ is concentrated in cohomological degree 0. Furthermore, the Serre module $\oplus_n F_n(\cP)$ is in sufficiently high degree equal to 
$\oplus \cN^{\otimes n \vec{i}} \otimes_T \cM$ which is flat over $T$. It follows since flatness is a property of the abelian category and \cite{AZ94}, that $\cP$ is flat over $T$ too. Furthermore, the Hilbert function of the Serre module is bounded so $\cP$ is even a flat family of finite length sheaves. The fact that it is also a family of skyscraper sheaves relative to $\cT$ follows from the fact that $\RHom_{\bX}(\cT, \cP) \simeq \cM$ which has the same rank vector as $\cT$. This completes the proof that $\psi$ is an isomorphism of stacks. Composing with the ``tautological'' isomorphism of Theorem~\ref{thm:tautological} gives the required isomorphism which maps the universal object $\Delta_{*}\cO_{\bX}$ (with appropriate isomorphisms) to $\cT^{\vee}$. 
\end{proof}

\section{Refined representation and their moduli}\label{sec:refined}

In this section, we introduce refined representations which give a different approach for recovering a DM-stack $\bX$ from a tilting bundle. The construction has antecedents in Abdelgadir-Ueda's \cite{AU} and is an outgrowth of Craw-Smith's theory of multi-linear series \cite{Craw-Smith}. 

Their starting hypotheses were quite different so, following them, we will for now relax the assumption that $\cT$ is tilting. We will however, assume that $\cT$ is a direct sum of non-isomorphic line bundles $\cT_i$. As in Section~\ref{sc:tensor_stable}, we let $A := \End(\cT)$ present $A$ as a quiver $Q = (Q_0,Q_1)$ with relations so that $A \simeq kQ/I$ for some admissible ideal $I\triangleleft kQ$. Furthermore, we view $A$-modules $\cM$ as quiver representations $(\cM_i,m_a)$ of $Q$ that satisfy the relations in $I$, the indices here range over $i \in Q_0, a \in Q_1$.
We also fix the dimension vector to be $\vec{1}:= (1,\ldots,1)$.
The letter $\bM$ will denote the rigidified moduli stack of right  $A$-modules of dimension vector $\vec{1}$.

The stack $\bM$ comes with a universal bundle $\cU  = \oplus_{i \in Q_0} \cU_i$.
Since the moduli stack $\bM$ is rigidified to remove the common $\bG_m$-stabiliser, $\cU$ is only uniquely defined up to twist by some line bundle on $\bM$.
The bundle $\cT^{\vee}$ gives a tautological family of quiver representations of $Q$ over $\bX$ and hence induces a morphism of stacks $f \colon \bX \rightarrow \bM$ such that $f^* \cU \simeq \cN \otimes_{\bX} \cT^{\vee}$ for some line bundle $\cN$ on $\bX$.
We will assume that one of the $\cT_0$ is $\cO_{\bX}$ for some distinguished vertex $0 \in Q_0$ and further arrange matters so $\cU_0 = \cN = \cO_{\bM}$, hence $\cU$ and $f^*$ are well-defined.

The universal line bundles of $\bM$ generate a free abelian subgroup of $\Pic(\bM)$ of rank $|Q_0|-1$.
This may be checked by restricting to the ``point'' of $\bM$ corresponding to the semisimple module $\cM$ of dimension vector $\vec{1}$.
Given our assumption that $\cU_0 = \cO_{\bM}$, we may identify this subgroup of $\Pic(\bM)$ with the free abelian group $\bZ^{Q_0 \setminus \{0\}}$ by taking the universal bundle $\cU_i$ to the generator $\chi_i$ for $0 \neq i \in Q_0$.
To simplify notation we write $\Lambda_Q:= \bZ^{Q_0 \setminus \{0\}}$.

To motivate the refined representation approach to stack recovery, let $\Lambda$ be a finitely generated abelian group such as $\text{im} (f^* \colon \Lambda_Q \to \Pic(\bX))$ above. Suppose its dual $\Lambda^{\vee}$ acts on an affine scheme $\Spec R$ so $R$ is a $\Lambda$-graded algebra. Consider the Artin stack $\bXtilde:= [\Spec R/\Lambda^{\vee}]$ which has {\em partial sheaf Cox ring} $\cO_{\bXtilde} \otimes_k \cO_{\Lambda^{\vee}} = \bigoplus_{\lambda \in \Lambda} \cO_{\lambda}$ where $\cO_{\lambda}$ is the line bundle on $\bXtilde$ corresponding to $R$ with $\Lambda^{\vee}$-action twisted by the character $\lambda \in \Lambda$. For any test scheme $S$, the morphisms $\phi \colon S \to \bXtilde$ consist, by definition, of a $\Lambda^{\vee}$-cover $\tilde{S} := \underline{\Spec}_S \oplus_{\lambda} \phi^* \cO_{\lambda}$ of $S$ and a $\Lambda^{\vee}$-equivariant map $\tilde{S} \to \Spec R$ which corresponds to $\Lambda$-graded algebra morphism $\phi^* \colon R \to \oplus_{\lambda} \phi^* \cO_{\lambda}$. In the multi-linear series setup, our choice of $\cT$ corresponds to picking a subset $Q_0 \subset \Lambda$ containing $0$, from which one naturally constructs the $\Lambda_Q$-graded algebra freely generated by the line bundles $\phi^*\cO_{\lambda}, \lambda \in Q_0$. To recover all the data $(\tilde{S}, \phi^*)$ however, we will in particular, need extra {\em refinement data} to reconstruct the $\Lambda$-graded sheaf of algebras $\oplus_{\lambda} \phi^* \cO_{\lambda}$. Passing from the $\Lambda_Q$-graded algebra to the $\Lambda$-graded one corresponds geometrically to lifting a map $S \to B\Lambda_Q^{\vee}$ to $B\Lambda^{\vee}$. Naturally, the kernel $\Lambda_r$ of the group homomorphism $f^* \colon \Lambda_Q \to \Pic(\bX)$ will play an important role in what follows. We now proceed with the details.

\subsection{Some monoidal notation}  \label{ssec:monoidal}
The data required to construct a partial sheaf Cox ring is most conveniently expressed using the language of monoidal categories. 

There are several monoidal categories of interest here. 
Firstly, for any stack $S$, we let $\Vect_1 (S)$ denote the symmetric monoidal category of line bundles on $S$, where the morphisms are the isomorphisms. 
The category is also rigid in the sense that it has (left and right) duals. 

Secondly, given an abelian group $\Lambda$ and subgroup $\Lambda'$, we define a symmetric monoidal category $\underline{\Lambda}/\Lambda'$ whose objects are the elements of $\Lambda$.
The morphisms are given by a pair $\lambda \in \Lambda, \lambda' \in \Lambda'$ and have the form $\lambda \xto{+\lambda'} \lambda + \lambda'$. 
Composition of morphisms is given by addition. The tensor product is also given by addition whilst the braiding and the associator are given by the identity $+0$. Note that morphisms in this category are unique (if they exist). 
This observation is useful to keep in mind when verifying diagrams in $\underline{\Lambda}/\Lambda'$ commute. 
In particular, it streamlines checking that $\underline{\Lambda}/\Lambda'$ is a symmetric monoidal category, an elementary verification we omit. Note also that it is rigid with duals given by negatives.
When $\Lambda' = 0$ we write $\underline{\Lambda} = \underline{\Lambda}/\Lambda'$.
The category $\underline{\Lambda}/\Lambda'$ is monoidally equivalent to $\underline{\Lambda/\Lambda'}$.

\begin{proposition}  \label{prop:Gcovers}
The $S$-points of the stack $B\Lambda^\vee$ for a given scheme $S$ are precisely monoidal functors $\Phi \colon \underline{\Lambda} \rightarrow \Vect_1 (S)$.
\end{proposition}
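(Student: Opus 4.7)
The plan is to unpack both sides of the claimed equivalence and match them directly. First I would recall that $\Lambda^\vee = \underline{\operatorname{Hom}}(\Lambda,\bG_m)$ is a diagonalisable (hence affine) group scheme, so the $S$-points of $B\Lambda^\vee$ are precisely $\Lambda^\vee$-torsors $\pi\colon \tilde S \to S$. Since $\Lambda^\vee$ is affine, $\pi$ is affine, so $\tilde S = \underline{\Spec}_S \cA$ for a sheaf of $\cO_S$-algebras $\cA$, and the $\Lambda^\vee$-action is equivalent to a $\Lambda$-grading $\cA = \bigoplus_{\lambda\in\Lambda}\cA_\lambda$. The torsor condition is equivalent to the statement that each $\cA_\lambda$ is an invertible $\cO_S$-module, that $\cA_0 = \cO_S$, and that the multiplication maps $\mu_{\lambda,\mu}\colon \cA_\lambda\otimes_{\cO_S}\cA_\mu \to \cA_{\lambda+\mu}$ are isomorphisms (this can be checked by passing to a trivialising cover, where $\cA$ becomes the group algebra of $\Lambda$).

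Next, I would construct the functor from $B\Lambda^\vee(S)$ to symmetric monoidal functors $\underline{\Lambda}\to \Vect_1(S)$. Given a torsor as above, define $\Phi(\lambda):=\cA_\lambda$ on objects; since the only morphisms in $\underline\Lambda$ are identities, there is nothing further to assign on morphisms. The lax/strong monoidal structure is given by the multiplication isomorphisms $\mu_{\lambda,\mu}$, and the unit constraint by the structure map $\cO_S \xrightarrow{\sim}\cA_0$. Associativity, unitality, and symmetry of $\Phi$ as a symmetric monoidal functor then translate directly into associativity, unitality, and commutativity of $\cA$ as an $\cO_S$-algebra.

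Conversely, given a symmetric monoidal $\Phi\colon\underline\Lambda\to\Vect_1(S)$, set $\cA:=\bigoplus_\lambda \Phi(\lambda)$ and define multiplication by the monoidal coherence isomorphisms. The monoidal axioms make $\cA$ into a $\Lambda$-graded commutative $\cO_S$-algebra each of whose graded pieces is invertible, and $\underline{\Spec}_S\cA \to S$ is then a $\Lambda^\vee$-torsor. These two assignments are mutually inverse up to unique coherent isomorphism. On morphisms, a map of $\Lambda^\vee$-torsors is a $\Lambda^\vee$-equivariant isomorphism $\tilde S \to \tilde S'$, equivalently an isomorphism of $\Lambda$-graded $\cO_S$-algebras $\cA'\to \cA$, equivalently a family of line bundle isomorphisms $\Phi(\lambda)\to\Phi'(\lambda)$ compatible with the monoidal structure, i.e.\ a monoidal natural isomorphism $\Phi\Rightarrow \Phi'$. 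Finally, one checks that this bijection is compatible with pullback along morphisms $S'\to S$, giving an equivalence of groupoid-valued pseudofunctors.

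The routine but slightly delicate part is bookkeeping the coherence data: translating between associator/unitor/symmetry on the monoidal side and associativity/unit/commutativity on the algebra side. The key observation that makes this manageable is the one already flagged in the definition of $\underline\Lambda/\Lambda'$: all hom-sets have at most one element, so every diagram of constraints automatically commutes and one only needs to exhibit the structure maps, not verify further compatibilities beyond the algebra axioms. The main conceptual step, rather than obstacle, is the identification of $\Lambda^\vee$-equivariant quasi-coherent sheaves on $S$ (with trivial action) with $\Lambda$-graded sheaves, which is where the diagonalisability of $\Lambda^\vee$ enters decisively.
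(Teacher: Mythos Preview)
Your proposal is correct and follows essentially the same approach as the paper's proof: in both directions one passes between a $\Lambda^\vee$-torsor $\tilde S = \underline{\Spec}_S\bigoplus_\lambda \Phi(\lambda)$ and a monoidal functor by taking graded/isotypic pieces and using the monoidal coherence maps as the algebra multiplication. The paper's argument is much terser, so your version simply fills in the details (torsor condition, coherence bookkeeping, behaviour on morphisms) that the paper leaves implicit.
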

\begin{proof}
Consider first a monoidal functor $\Phi$ as above. The corresponding $S$-point of $B\Lambda^{\vee}$ will be a $\Lambda^{\vee}$-cover of the form $\underline{\Spec}_{S}\oplus_{\lambda} \Phi(\lambda)$. The data of the monoidal functor also includes the multiplication map $\Phi(\lambda) \otimes_S \Phi(\lambda') \to \Phi(\lambda + \lambda')$. Conversely, for any $S$-point of $B\Lambda^{\vee}$ corresponding to the $\Lambda^{\vee}$-cover $\pi \colon \tilde{S} \to S$, one obtains a a monoidal functor $\Phi$ where $\Phi(\lambda)$ is the $\lambda$-isotypic component of $\pi_* \cO_{\tilde{S}}$. 
\end{proof}
One important case of the construction in the above proof occurs when $\Lambda \leq \Pic(\bX)$.

\begin{definition}
Let $\Lambda \leq \Pic(\bX)$ and $\Phi \colon \underline{\Lambda} \to \Vect_1(\bX)$ be a monoidal functor such that $\Phi(\lambda)$ is a line bundle in the isomorphism class $\lambda$. The $\Lambda$-graded sheaf of $\cO_{\bX}$-algebras $\oplus \Phi(\lambda)$ above is called a {\em partial sheaf Cox ring associated to $\Lambda$}. The corresponding {\em partial Cox ring} is the induced $\Lambda$-graded ring $\oplus H^0(\bX,\Phi(\lambda))$.
\end{definition}

Given line bundles $\cM_i, i \in Q_0 \setminus \{0\}$, we have a canonical monoidal functor $\cM_? \colon\underline{\Lambda}_{Q} \rightarrow \Vect_1(S)$ taking the object corresponding to $\chi = \sum_{i \in Q_0\setminus \{0\}} a_i \chi_i$ to $\cM_{\chi}:=\otimes_{i \in Q_0 \setminus \{0\}} \cM_i^{a_i}$. To construct a $\Pic(\bX)$-graded sheaf of algebras on $S$ which is otherwise freely generated by the $\cM_i$ thus corresponds to lifting the monoidal functor $\cM_{?}$ to $\underline{\Lambda}_Q/\Lambda_r \to \Vect_1(S)$. 

\begin{proposition}  \label{prop:lifttomodLambdar}
Let $\cO_?, \cM_?|_r \colon \underline{\Lambda}_r \to \Vect_1(S)$ denote the trivial functor and the restriction of $\cM_?$ to $\underline{\Lambda}_r$. A lift of $\cM_?$ to a monoidal functor  $F \colon \underline{\Lambda}_Q/\Lambda_r \to \Vect_1(S)$ corresponds to a natural isomorphism $g \colon \cO_? \to \cM_?|_r$. The correspondence is given by 
$$ g_{\kappa}= F(0 \to \kappa) \colon \cO_S \to \cM_{\kappa}.$$
\end{proposition}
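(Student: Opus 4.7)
The plan is to exhibit mutually inverse constructions relating monoidal lifts $F$ of $\cM_?$ to monoidal natural isomorphisms $g \colon \cO_? \to \cM_?|_r$, with the prescribed formula $g_\kappa = F(0 \xto{+\kappa} \kappa)$ supplying one direction. The key observation is that in $\underline{\Lambda}_Q/\Lambda_r$ every morphism $\lambda \xto{+\kappa} \lambda+\kappa$ factors as $\id_\lambda \otimes (0 \xto{+\kappa} \kappa)$ after the coherence identification $\lambda + 0 = \lambda$, so monoidality forces any lift $F$ to be determined by its values at the object $0$.

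First, given a monoidal lift $F$, set $g_\kappa := F(0 \xto{+\kappa} \kappa)$. Since $F$ lifts $\cM_?$ we have $F(\lambda) = \cM_\lambda$ on all objects, and the monoidal unit coherence identifies $F(0)$ with $\cO_S$; invertibility of $g_\kappa$ is automatic because every morphism in $\underline{\Lambda}_Q/\Lambda_r$ is invertible, with $\lambda + \kappa \xto{-\kappa} \lambda$ inverse to $\lambda \xto{+\kappa} \lambda + \kappa$. Ordinary naturality of $g$ is vacuous since $\underline{\Lambda}_r$ carries only identity morphisms; the monoidal compatibility follows by applying $F$ to the identity
$$ (0 \xto{+\kappa} \kappa) \otimes (0 \xto{+\kappa'} \kappa') = (0 \xto{+(\kappa+\kappa')} \kappa+\kappa') $$
in $\underline{\Lambda}_Q/\Lambda_r$, which unwinds to the commutation of $g_{\kappa+\kappa'}$ with $g_\kappa \otimes g_{\kappa'}$ under the structure isomorphisms of $\cM_?$.

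Conversely, given $g$, define $F(\lambda) := \cM_\lambda$ on objects and, on each morphism $\lambda \xto{+\kappa} \lambda+\kappa$, the composite
$$ \cM_\lambda \xrightarrow{\sim} \cM_\lambda \otimes_S \cO_S \xrightarrow{\id \otimes g_\kappa} \cM_\lambda \otimes_S \cM_\kappa \xrightarrow{\sim} \cM_{\lambda + \kappa}, $$
with the outer arrows the structure isomorphisms of $\cM_?$. The factorisation noted in the opening paragraph shows this is the only possible definition if $F$ is to be monoidal. Functoriality of $F$ under composition $\lambda \xto{+\kappa_1} \lambda+\kappa_1 \xto{+\kappa_2} \lambda+\kappa_1+\kappa_2$ then reduces to the monoidal compatibility of $g$, while the unit and associativity axioms for $F$ reduce to those already satisfied by $\cM_?$.

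These constructions are mutually inverse tautologically: both round trips $F \mapsto g \mapsto F$ and $g \mapsto F \mapsto g$ collapse via the unitor identification to the identity. The main obstacle, such as it is, is really only bookkeeping with unitors, associators and structure isomorphisms on $\Vect_1(S)$; this is eased by the rigidity of the symmetric monoidal category of line bundles, in which any two parallel composites of coherence isomorphisms coincide, so no separate verification of alternative coherence composites is needed.
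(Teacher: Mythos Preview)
The paper states this proposition without proof, so there is no argument to compare against; your proof fills in exactly the sort of routine verification the authors evidently had in mind. Your approach is correct: the essential point is that every morphism in $\underline{\Lambda}_Q/\Lambda_r$ has the form $\id_\lambda \otimes (0 \xto{+\kappa} \kappa)$, so a monoidal lift $F$ is determined on morphisms by the family $g_\kappa = F(0 \to \kappa)$, and conversely such a family extends uniquely to a lift via the formula you give. The coherence checks you flag at the end are indeed harmless; the paper itself remarks just before this proposition that morphisms in $\underline{\Lambda}/\Lambda'$ are unique when they exist, which is precisely why all the required diagrams in the source category commute automatically and the only content lies in the compatibility $g_{\kappa+\kappa'} = g_\kappa \otimes g_{\kappa'}$ on the $\Vect_1(S)$ side, as the paper makes explicit in Section~\ref{ssec:refined}.
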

One may view the natural isomorphism $g$ above as a categorification of the relations $\kappa$ in $\Lambda_r$. 


\subsection{Definition of refined representations} \label{ssec:refined}

In this subsection, we recall the definition of refined representations introduced in \cite[Definition~3.2]{Abd}. 
However, our approach will geometric, defining the moduli stacks first. 

We start by defining a natural morphism $h\colon \bM \rightarrow B\Lambda_r^\vee$ as follows. Given a family of $A$-modules $(\cM_i,  m_a) \in \bM(S)$ with $\cM_0 = \cO_S$, we obtain a functor $\cM_? \colon \underline{\Lambda} \to \Vect_1(S)$. Restricting to $\Lambda_r \subset \Lambda_Q$ and using Proposition~\ref{prop:Gcovers} gives a morphism $S \to B\Lambda_r^{\vee}$. As $S$ varies over test schemes, we obtain a natural morphism $h \colon \bM \to B\Lambda_r^{\vee}$. 
We let $\widetilde{\bM}$ be the $\Lambda_r^{\vee}$-cover of $\bM$ defined by the following 2-Cartesian square.
\begin{equation} \label{eq:Mtilde}
\begin{CD}
\widetilde{\bM} @>>> \bM \\
@VVV @VVhV \\
\text{pt} @>>> B\Lambda_r^{\vee} = [\text{pt}/\Lambda_r^{\vee}].
\end{CD}
\end{equation}
Now the composite $S \rightarrow \text{pt} \rightarrow B\Lambda_r^\vee$
is given by the trivial functor $\cO_? \colon \underline{\Lambda}_r \rightarrow \Vect_1(S)$.
By the standard construction of the fibre product of stacks, we see that an object of $\widetilde{\bM}(S)$ corresponds to a representation $(\cM_i,m_a)$ and a natural isomorphism $g \colon \cO_? \rightarrow \cM_?|_r$ of braided monoidal functors.
In other words, $g$ is a collection of isomorphisms $g_{\kappa} \colon \cO_S \xrightarrow{\sim} \cM_{\kappa}, \kappa \in \Lambda_r$ such that $g_{\kappa + \kappa'} = g_{\kappa} \otimes g_{\kappa'}$. From Proposition~\ref{prop:lifttomodLambdar}, this allows us to lift the monoidal functor $\cM_?$ to $\underline{\Lambda}_Q/\Lambda_r$ and thus build a $\Pic(\bX)$-graded sheaf of algebras on $S$ with generators $\cM_i$. 

This can be interpreted geometrically as follows. Note that 
$$ B(\Lambda_Q/\Lambda_r)^{\vee} = \text{pt} \times_{B\Lambda_r^{\vee}} B\Lambda_Q^{\vee}.$$
From Diagram~\ref{eq:Mtilde}, we see that there is now a natural morphism $\tilde{\bM} \to B(\Lambda_Q/\Lambda_r)^{\vee}$ as desired. 

There are natural morphisms $\bX \rightarrow \text{pt}$ and $f \colon \bX \rightarrow \bM$ but to use the universal property of fibre products we further need to give 2-isomorphism between the two composites from $\bX$ to $B\Lambda_r^{\vee}$.
Given a choice of basis $B_r$ of $\Lambda_r$, this is a choice of $\gamma_{\kappa} \colon \cO \xto{\sim} \cT_{\kappa}^\vee$ for each $\kappa \in B_r$.
Such $\gamma_k$'s exist since $\Lambda_r$ is the kernel of $f^*$.
For now pick such isomorphisms; we will later further discuss this ambiguity.


\begin{definition} \label{def:refined}
A {\em flat family of refined representations} $\cM$ of $A$ over $S$ of dimension vector $\vec{1}$ consists of a representation $(\cM_i, m_a, g) \in \widetilde{\bM}(S)$ such that $g$ satisfies the following condition: for any two pairs of vertices $(i,j)$ and $(k,l)$ for which $\kappa:= (\chi_i + \chi_l)-(\chi_j + \chi_k) \in \Lambda_r$ and path $a \colon i \rightarrow j$, the following diagram commutes
\begin{equation}  \label{eq:refined}
\begin{CD}
\cM_i \otimes \cM_k @>m_a \otimes \text{id}>> \cM_j \otimes \cM_k\\
@VV\text{id}V @VVg_{\kappa}V\\
\cM_i \otimes \cM_k @>\text{id} \otimes m_{\gamma(a)}>> \cM_i \otimes \cM_l.
\end{CD}
\end{equation}
We let $\bM_{\text{ref}}$ denote the resulting {\em moduli stack of refined representations of $A$}. We refer to $g$ as {\em refinement data}.
\end{definition}

\begin{remark}
This definition of refined representations is slightly different from that given in \cite[Definition 3.2]{Abd} where the commutative diagram condition is omitted.
\end{remark}

The choice of isomorphisms $\gamma_\kappa$ for every $\kappa \in B_r$ gives a family of refined representations $(\cT_i^\vee, t_a, \gamma_\kappa)$ over $\bX$ and so a morphism $\bX \rightarrow \bM_\text{ref}$.
Any two such choices give 2-isomorphisms between the associated morphisms $\bX \rightarrow \bM_\text{ref}$.

When $\cT$ is a tilting bundle, we use the following method to select the isomorphisms $\gamma_k$ as follows. Let $K^b(\cT_i)$ is the homotopy category of complexes whose components are direct sums of line bundles $\cT_i^\vee, i \in Q_0 \setminus \{0\}$. Since $\cT$ is tilting, the inclusion functor $\iota \colon K^b(\cT_i) \rightarrow D^b(\bX)$ is a triangulated equivalence and we may pick a quasi-inverse $\iota^{-1} \colon D^b(\bX) \to K^b(\cT_i)$. 
Given $\chi \in \Lambda_{Q}$, the image of the line bundle $\cT_\chi^\vee:= f^* \cU_\chi$ under this functor $D^b(\bX) \rightarrow K^b(\cT_i)$ is by definition a complex whose components are direct sums of line bundles of the form $\cT_i^\vee$.
For $\kappa \in \Lambda_r$ the following lemma gives us an isomorphism from $\cO_\bX$ to $\cT_\kappa^\vee$.

\begin{lemma}\label{lm:kappa0}
A quasi-inverse $\iota^{-1} \colon D^b(\bX) \rightarrow K^b(\cT_i)$ of the natural inclusion $\iota\colon K^b(\cT_i) \rightarrow D^b(\bX)$ induces an isomorphism $\gamma_k\colon \cO_\bX \xto{\sim}\cT_\kappa^\vee$ for every  $\kappa \in \Lambda_r$.
\end{lemma}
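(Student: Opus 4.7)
My plan is to exploit that $\kappa \in \Lambda_r$ forces $\cT_\kappa^\vee$ to be trivializable as a line bundle on $\bX$, and then transport this triviality through the equivalence $\iota$ to land on $\cT_0^\vee = \cO_\bX$ viewed as a one-term complex in degree zero. The lemma will follow once I produce an isomorphism in $D^b(\bX)$ between $\cO_\bX$ and $\cT_\kappa^\vee$, both of which are line bundles in the heart of the standard $t$-structure.

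Concretely I would set $C_\kappa := \iota^{-1}(\cT_\kappa^\vee) \in K^b(\cT_i)$ and write $\epsilon_{\cT_\kappa^\vee} \colon \iota(C_\kappa) \xto{\sim} \cT_\kappa^\vee$ for the counit isomorphism that comes with the chosen quasi-inverse. Since $\Lambda_r$ is by definition the kernel of $f^* \colon \Lambda_Q \to \Pic(\bX)$ and $\cT_\kappa^\vee = f^*\cU_\kappa$, I can pick any isomorphism $\beta \colon \cT_\kappa^\vee \xto{\sim} \cO_\bX$ in $D^b(\bX)$. Composing then gives $\iota(C_\kappa) \xto{\sim} \cO_\bX = \iota(\cT_0^\vee)$ in $D^b(\bX)$, which by full faithfulness of $\iota$ lifts to a unique isomorphism $\alpha \colon C_\kappa \xto{\sim} \cT_0^\vee$ in $K^b(\cT_i)$. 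I then define
\[
\gamma_\kappa \colon \cO_\bX = \iota(\cT_0^\vee) \xto{\iota(\alpha)^{-1}} \iota(C_\kappa) \xto{\epsilon_{\cT_\kappa^\vee}} \cT_\kappa^\vee.
\]
This will be an isomorphism in $D^b(\bX)$ between line bundles both concentrated in cohomological degree zero, hence an isomorphism of sheaves, as desired.

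The step that needs care is producing the lift $\alpha$, which is guaranteed by full faithfulness of $\iota$ inherited from it being a triangulated equivalence in the tilting setup. The scalar ambiguity in the choice of $\beta$ is harmless since the lemma asserts only existence, and in any case $\dim_k \Hom_\bX(\cO_\bX, \cT_\kappa^\vee) = 1$ under the smooth projective hypotheses in force. I do not foresee a substantial obstacle; the main bookkeeping will be tracking which pieces of the quasi-inverse data (the functor on objects together with its counit) determine the resulting $\gamma_\kappa$.
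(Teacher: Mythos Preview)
Your construction is circular. Trace through: by full faithfulness you set $\iota(\alpha) = \beta \circ \epsilon_{\cT_\kappa^\vee}$, so
\[
\gamma_\kappa \;=\; \epsilon_{\cT_\kappa^\vee} \circ \iota(\alpha)^{-1} \;=\; \epsilon_{\cT_\kappa^\vee} \circ \epsilon_{\cT_\kappa^\vee}^{-1} \circ \beta^{-1} \;=\; \beta^{-1}.
\]
Your output is literally the inverse of the arbitrary isomorphism $\beta$ you fed in at the start; the quasi-inverse $\iota^{-1}$ and its counit cancel out entirely. So you have not shown that $\iota^{-1}$ \emph{induces} anything --- you have only re-observed that $\cT_\kappa^\vee \simeq \cO_\bX$, which is the definition of $\kappa \in \Lambda_r$. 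Your remark that ``the lemma asserts only existence'' misreads the statement: the whole point (and the reason the lemma is placed immediately before the constructions of \S\ref{ssc:pictoref}--\ref{ssec:reftopic}) is that a \emph{choice} of $\iota^{-1}$ singles out a \emph{specific} $\gamma_\kappa$, which is then used consistently to build the maps between $\bM^{\cT}$ and $\bM_{\textup{ref}}$.

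The paper achieves this via the determinant. The complex $\iota^{-1}(\cT_\kappa^\vee) \in K^b(\cT_i)$ has components that are direct sums of the $\cT_i^\vee$, so its determinant is $\cT_\chi^\vee$ for a well-defined $\chi \in \Lambda_Q$ depending on $\iota^{-1}$. Since $\cT_\kappa^\vee \simeq \cO_\bX$, their classes agree in $K_0(\bX)$; but $K_0(\bX)$ is freely generated by the $[\cT_i^\vee]$, forcing $\chi = 0$ and hence the determinant to be $\cO_\bX$ on the nose. The canonical isomorphism between a bounded complex of bundles whose cohomology is a single line bundle and its determinant (the same mechanism used in Lemma~\ref{lm:Mcan}) then supplies $\gamma_\kappa$ without any auxiliary choice of $\beta$.
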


\begin{proof}
The class of $\cT_\kappa^\vee$ in the Grothendieck group of $\bX$ is equal to that of $\cO_\bX$ since they are isomorphic.
Therefore the determinant of the projective resolution of $\cT_\kappa^\vee$ must be equal to $\cO_\bX$. 
Otherwise it would give a non-trivial relation in the Grothendieck group of $\bX$ which is freely generated by the classes of $\cT_i^\vee$ for $i\in Q_0$.
\end{proof}

\subsection{From tensor stable to refined representations} \label{ssc:pictoref}
For the rest of the section we will assume that $\cT$ is tilting and fix a quasi-inverse $\iota^{-1} \colon D^b(\bX) \rightarrow K^b(\cT_i)$ to $\iota$ as in Lemma~\ref{lm:kappa0}. Since $\cT$ is a direct sum of line bundles, we may consider the tensor stable moduli stack $\bM^{L_{\bullet}}$ where $L_{\bullet} = \{ L_i \,|\, i \in Q_0 \setminus \{0\}\}$ and the $L_i$ are the two-sided partial tilting complexes corresponding to $\cT_i^\vee$. We abuse notation and write $\bM^{\cT}:= \bM^{L_{\bullet}}$. 
In this subsection, we construct a morphism $h \colon \bM^{\cT} \to \bM_{\text{ref}}$ by explicitly assigning refinement data to tensor stability data.

Consider a flat family of $A$-modules $\cM \in \bM^{\cT}(S)$ over a test scheme $S$.
The tensor stability data consists of compatible isomorphisms 
$$\psi_i \colon \cM \otimes_A^L L_i \xto{\sim} \cN_i \otimes_S \cM$$
where $\cN_i$ are line bundles on $S$. The assignment $\chi_i$ to $\cN_i$ extends to a monoidal functor $\cN_? \colon \Lambda_Q \rightarrow \Vect_1(S)$ by Proposition \ref{prop:doublestable} (see Subsection~\ref{ssec:monoidal}). The isomorphisms $\psi_i$ are only defined up to scalar and we will need to show at the end, that our definition of $h(\cM,\psi_i)$ is independent of this ambiguity.

There are a several functors from $\underline{\Lambda}_{Q}$ to $\Vect_1(S)$ that are in play here: there are $\cM_?$ and $\cN_?$, which were defined above, and then $(\cM \otimes_A^L L_?)_0$ and $\cM_?^{\text{can}}$ which we will define below.
The functors $\cM_?$ and $\cN_?$ are monoidal while $(\cM \otimes_A^L L_?)_0$ and $\cM_?^{\text{can}}$ turn out not to be monoidal.
The refinement data corresponding to $\psi_i$ will be derived by relating these functors to each other.

\begin{lemma}\label{lm:equivNM}
The tensor stability data give a natural isomorphism of monoidal functors  $\xi \colon \cM_? \rightarrow \cN_?$.
\end{lemma}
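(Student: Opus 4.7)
The plan is to construct $\xi$ by defining it on the generators $\chi_i$ of $\underline{\Lambda}_Q$ via the $0$-component of the tensor stability data $\psi_i$, and then extend by monoidality. Since $\underline{\Lambda}_Q = \underline{\Lambda}_Q/0$ has only identity morphisms between objects, naturality of $\xi$ is automatic, and a natural isomorphism of monoidal functors $\cM_? \to \cN_?$ is uniquely determined by a compatible choice of isomorphisms on the generators.

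The first step would be to observe that, under the derived equivalence $\Phi = \RHom_\bX(\cT,-)$, the autoequivalence $-\otimes_A^L L_i$ corresponds to $\cT_i^\vee \otimes_\bX -$. Using that $\cT_0 = \cO_\bX$, this yields a canonical identification $(\cM \otimes_A^L L_i)_0 \cong \cM_i$ of line bundles on $S$, since on the right-hand side the $0$-component of $\Phi(\cT_i^\vee \otimes \cF)$ is $\cT_i^\vee \otimes \cF$, which is also the $i$-component of $\Phi(\cF)$. Composing this identification with the $0$-component of $\psi_i$, and applying the rigidification normalization $\cM_0 = \cO_S$, produces an isomorphism
\[ \xi_{\chi_i} \colon \cM_i \xto{\sim} \cN_i. \]
Extending monoidally by $\xi_\chi := \bigotimes_i \xi_{\chi_i}^{\otimes a_i}$ for $\chi = \sum a_i \chi_i$ then gives the desired candidate $\xi$, since $\cM_?$ and $\cN_?$ are by construction the free symmetric monoidal extensions of their values on the generators $\chi_i$, and $\Vect_1(S)$ is symmetric monoidal.

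The main obstacle I anticipate is verifying that the strict commutativity of Diagram~(\ref{eq:doublestable}) plays exactly the role one needs here. Each $\psi_i$ is only determined up to a global scalar in $\cO_S^\times$, and this ambiguity matches the scalar freedom one has in choosing a natural isomorphism of monoidal functors at each generator. The critical point is that the on-the-nose commutativity built into Definition~\ref{defn:doubletensor} ensures the $\xi_{\chi_i}$ combine consistently across all of $\underline{\Lambda}_Q$ without introducing a $2$-cocycle obstruction, so that $\xi$ really is a natural isomorphism of symmetric monoidal functors rather than merely a projective one. The remaining compatibility with pullback in $S$ and with the symmetric braiding on $\Vect_1(S)$ is then formal from the naturality of the identifications involved.
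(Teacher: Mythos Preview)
Your construction is correct and is exactly the paper's argument: identify $(\cM \otimes_A^L L_i)_0 = \cM_i$ via $\cT_0 = \cO_\bX$, take the $0$-component $\psi_{i,0}$ to obtain $\xi_{\chi_i}\colon \cM_i \xrightarrow{\sim} \cN_i \otimes_S \cM_0 = \cN_i$, and extend by tensoring over $S$.

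One clarification: the obstacle you anticipate does not in fact arise for this particular lemma. Both $\cM_?$ and $\cN_?$ are, by definition, the \emph{free} symmetric monoidal extensions of their values on the generators $\chi_i$, so any choice of isomorphisms $\xi_{\chi_i}$ extends uniquely and automatically to a monoidal natural isomorphism---there is no $2$-cocycle to kill, and the strict commutativity of Diagram~(\ref{eq:doublestable}) plays no role here. The scalar ambiguity in the $\psi_i$ simply means that $\xi$ itself depends on the chosen representatives; that dependence is exactly what gets cancelled later in Proposition~\ref{prop:can}, where the commutativity condition does the real work.
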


\begin{proof}
First note that for $i\in Q_0$ we have that $(\cM \otimes_A^L L_i)_0 = \cM_i$ where the subscript 0 means the component corresponding to the distinguished vertex $0\in Q_0$. Furthermore $\cM_0 = \cO_S$. Our tensor stability data then gives
$$\cM_i = (\cM \otimes_A^L L_i)_0 \xto{\psi_{i,0}} \cN_i \otimes_S \cM_0 = \cN_i.$$
Tensoring over $S$ induces the required isomorphisms $\xi_\chi \colon \cM_\chi \xto{\sim} \cN_\chi$.
\end{proof}

The other two functors of interest $(\cM \otimes_A^L L_?)_0$ and $\cM_?^{\text{can}}$ are given as follows.
The functor $(\cM \otimes_A^L L_?)_0$ takes the value  $(\cM \otimes_A^L L_\chi)_0$ for $\chi \in \Lambda_Q$.
For $\cM_?^{\text{can}}$, let $K^b(P_i)$ be the homotopy category of complexes whose components are direct sums of the projectives $P_i := A\bfe_i$. Since the derived equivalence $D^b(A) \rightarrow D^b(\bX)$ takes $P_i$ to $\cT_i^\vee$, our choice of quasi-inverse $D^b(\bX) \rightarrow K^b(\cT_i)$ induces an equivalence $D^b(A) \rightarrow K^b(P_i)$. 
The image of the module $L_\chi$ under this functor $D^b(A) \rightarrow K^b(P_i)$ is by definition a complex whose components are direct sums of projectives of the form $P_i$.
We then have an expression of $(\cM \otimes_A^L L_\chi)_0$ as a complex whose components are direct sums of $\cM_i$.
Take $\cM_?^{\text{can}}$ to be the determinant of this complex of $S$-bundles.

\begin{lemma}\label{lm:Mcan}
The functors $(\cM \otimes_A^L L_?)_0$ and $\cM_?^{\textup{can}}$ are canonically isomorphic.
\end{lemma}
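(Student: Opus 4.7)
The plan is to identify both functors at each $\chi \in \Lambda_Q$ with the same line bundle on $S$, via the Knudsen--Mumford determinant. Because every non-identity morphism in $\underline{\Lambda}_Q$ is absent (the subgroup is trivial), naturality will reduce to exhibiting a canonical isomorphism object-by-object.

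First I would unpack $\cM_?^{\textup{can}}$. The fixed quasi-inverse $\iota^{-1}$ sends $L_\chi$ to a bounded complex $P_\chi^\bullet$ whose terms are direct sums of the projectives $P_i = A\bfe_i$. Applying $\cM \otimes_A (-)$ and taking the $\bfe_0$-component yields a bounded complex $C_\chi^\bullet$ of locally free $\cO_S$-modules of finite rank, each term a direct sum of the line bundles $\cM_i$; by construction $\cM_\chi^{\textup{can}} = \det C_\chi^\bullet$.

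Next, since $\cM \in \bM^{\cT}(S)$ lies in the locally closed substack on which the derived operation $(-)\otimes_A^L L_\chi$ is defined (Proposition~\ref{prop:lambdamap}), $\cM \otimes_A^L L_\chi$ is a flat $A$-module of dimension vector $\vec{1}$ concentrated in cohomological degree $0$. Hence the cohomology of $C_\chi^\bullet$ is concentrated in degree $0$ and equals the line bundle $(\cM \otimes_A^L L_\chi)_0$. I would then invoke the standard Knudsen--Mumford identity, valid for a bounded complex of locally free sheaves of finite rank with locally free cohomology,
\[ \det C_\chi^\bullet \;\simeq\; \bigotimes_i \bigl(\det H^i(C_\chi^\bullet)\bigr)^{\otimes (-1)^i}, \]
which in our situation collapses to $\det H^0(C_\chi^\bullet) = (\cM \otimes_A^L L_\chi)_0$, delivering the desired isomorphism $\cM_\chi^{\textup{can}} \simeq (\cM \otimes_A^L L_\chi)_0$.

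The mild obstacle is confirming that the resulting isomorphism is genuinely canonical, and in particular independent of the complex chosen to represent $L_\chi$ in $K^b(P_i)$. This follows from the fact that the Knudsen--Mumford determinant descends to a functor on the derived category of perfect complexes, sending quasi-isomorphisms to canonical isomorphisms; so the line bundle $\det C_\chi^\bullet$ depends only on the class $\iota^{-1}(L_\chi)$ and not on any particular bounded representative, and the isomorphism above is free of auxiliary choices.
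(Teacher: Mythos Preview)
Your proof is correct and follows essentially the same approach as the paper's own two-line argument: both observe that $(\cM \otimes_A^L L_\chi)_0$, computed via the fixed resolution in $K^b(P_i)$, is a bounded complex of vector bundles whose cohomology is a single line bundle, and then invoke the canonical identification of such a complex with its determinant. You have simply spelled out the Knudsen--Mumford identity and the independence-of-representative argument more explicitly than the paper does.
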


\begin{proof}
The complex $(\cM \otimes_A^L L_\chi)_0$ is a complex of bundles whose cohomology is a line bundle concentrated in one degree. 
There is a canonical isomorphism from this to its determinant $\cM_{\chi}^{\text{can}}$.
\end{proof}

The following example shows that, without tensor stability data, one has $\cM_\chi \neq \cM_\chi^\text{can}$ in general.

\begin{example}
Take the weighted projective line $\bX:=\bP(1,2)$. 
The stack $\bX$ is the stacky Proj of the $\bZ$-graded ring $k[y_1,y_2]$ where $\deg(y_i)=i$.
Consider the tilting bundle $\cT= \oplus_{i=0}^2 \,\cO(\vecy_i)$ and let $O(\vecy_i)$ be the two-sided partial tilting complex corresponding to $\cO(\vecy_i)$. 
Take $\cM$ here to be the universal family $\cU=(\cU_i, u_a)$ on the  moduli space of quiver representations $\bM$.
We have $(\cU \otimes_A^L O(\vecy_i))_0 = \cU_i$.
For $\chi= 2\,\chi_1$, $$\cU_\chi^\text{can} = (\cU \otimes_A^L O(\vecy_1)\otimes_A^L O(\vecy_1))_0 = (\cU \otimes_A^L O(\vecy_2))_0 = \cU_2.$$
On the other hand, $\cU_\chi = \cU_1 \otimes_\bM \cU_1$ and $\cU_2 \not\simeq \cU_1 \otimes_\bM \cU_1$ on $\bM$.
\end{example}

\begin{proposition}\label{prop:can}
Tensor stability data gives an isomorphism of functors between $\cM_?$ and $\cM_?^{\textup{can}}$ that is independent of scaling. 
\end{proposition}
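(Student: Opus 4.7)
The plan is to define the isomorphism explicitly from the tensor stability data and then verify scaling invariance. For any $\chi = \sum a_i \chi_i \in \Lambda_Q$, write $L_\chi := L_1^{\otimes a_1} \otimes_A^L \cdots \otimes_A^L L_s^{\otimes a_s}$, interpreting negative exponents via the inverse two-sided tilting complexes. Iteratively composing the $\psi_i^{\pm 1}$ yields an isomorphism $\psi_\chi\colon \cM \otimes_A^L L_\chi \xto{\sim} \cN_\chi \otimes_S \cM$. Crucially, the on-the-nose commuting diagram in Definition~\ref{defn:doubletensor} guarantees that $\psi_\chi$ is well-defined up to the scalar ambiguities in the individual $\psi_i$, independent of the order of composition. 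Taking the $0$-component then gives $\psi_{\chi,0}\colon (\cM \otimes_A^L L_\chi)_0 \xto{\sim} \cN_\chi$.

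I define the component at $\chi$ of the desired natural isomorphism $\cM_? \xto{\sim} \cM_?^{\textup{can}}$ to be the composite
\begin{equation*}
\cM_\chi \xto{\xi_\chi} \cN_\chi \xto{\psi_{\chi,0}^{-1}} (\cM \otimes_A^L L_\chi)_0 \xto{\sim} \cM_\chi^{\textup{can}},
\end{equation*}
where the first arrow is from Lemma~\ref{lm:equivNM} and the last is the canonical isomorphism of Lemma~\ref{lm:Mcan}. Since every morphism in $\underline{\Lambda}_Q$ is an identity, naturality in $\chi$ is automatic.

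The crux is independence from the scaling ambiguity, and this is where I expect the main obstacle to lie: keeping track of how the two intermediate maps transform under rescaling. Suppose each $\psi_i$ is rescaled as $\psi_i \mapsto \lambda_i \psi_i$ for some $\lambda_i \in \cO_S^\times$. Since $\xi_i = \psi_{i,0}$ by the construction in Lemma~\ref{lm:equivNM}, the isomorphism $\xi_\chi = \bigotimes_i \xi_i^{\otimes a_i}$ rescales by $\prod_i \lambda_i^{a_i}$ (note the sign of $a_i$ is correctly handled since $\xi_i^{-1}$ rescales by $\lambda_i^{-1}$). On the other hand, $\psi_\chi$ is built by applying $\psi_i^{\pm 1}$ a total of $|a_i|$ times with signs matching $a_i$, so $\psi_\chi$, and hence $\psi_{\chi,0}$, also rescales by exactly $\prod_i \lambda_i^{a_i}$. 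These factors cancel in the composite $\psi_{\chi,0}^{-1} \circ \xi_\chi$, while the canonical isomorphism of Lemma~\ref{lm:Mcan} does not involve the $\psi_i$ at all. The matching of the two scaling factors relies essentially on the on-the-nose commutativity condition from Definition~\ref{defn:doubletensor}, which was imposed precisely to make this kind of matching possible.
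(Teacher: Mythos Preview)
Your proof is correct and follows essentially the same route as the paper: define the component at $\chi$ as the composite of $\xi_\chi$, $\psi_{\chi,0}^{-1}$, and the canonical isomorphism of Lemma~\ref{lm:Mcan}, then observe that rescaling each $\psi_i$ by $\lambda_i$ multiplies both $\xi_\chi$ and $\psi_{\chi,0}$ by the same factor $\prod_i \lambda_i^{a_i}$, so the composite is unchanged. The paper writes the inverse composite $\cM_\chi^{\textup{can}} \to \cM_\chi$ and states the cancellation in one line, whereas you spell out the scaling computation more explicitly, but the argument is the same.
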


\begin{proof}
For $\chi \in \Lambda_{Q}$ Lemmas~\ref{lm:equivNM} and \ref{lm:Mcan} give us
\begin{equation}\label{eq:phitog}
\cM_\chi^\text{can} \xto{\sim} (\cM \otimes_A^L L_\chi)_0 \xto{\psi_{\chi,0}} \cN_\chi \otimes_S \cM_0 = \cN_\chi \xto{{\xi_\chi^{-1}}} \cM_\chi.
\end{equation}
Scaling the stability data $(\psi_i)_{i\in Q_0}$ multiplies $\psi_\chi$ and $\xi_\chi$ by the same scalar for all $\chi \in \Lambda_Q$ and hence the result.
\end{proof}

\begin{lemma}\label{lm:kappa0_1}
Take $\kappa \in \Lambda_r$ then $\cM_0 = \cM_\kappa^\textup{can}$.
\end{lemma}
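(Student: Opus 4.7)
The plan is to deduce the lemma from a Grothendieck-group calculation, in direct analogy with the argument of Lemma~\ref{lm:kappa0}. By definition of $\Lambda_r = \ker(f^* \colon \Lambda_Q \to \Pic \bX)$, the hypothesis $\kappa \in \Lambda_r$ means exactly that the line bundle $\cT_\kappa^\vee = f^* \cU_\kappa$ is isomorphic to $\cO_\bX = \cT_0^\vee$ on $\bX$. Under the derived equivalence $D^b(\bX) \simeq D^b(A)$ that carries $\cT_i^\vee$ to $P_i$, this identification yields the equality of classes $[L_\kappa] = [P_0]$ in $K_0(A)$ (where we view $L_\kappa$ via the left $A$-module structure that is paired with $\cM$ in forming $\cM \otimes_A^L L_\kappa$).

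Since $A$ has finite global dimension, $K_0(A)$ is freely generated by $\{[P_j]\}_{j \in Q_0}$. Writing the complex $\iota_A^{-1}(L_\kappa) \in K^b(P_i)$ as $\bigoplus_{j} P_j^{\oplus m_j^{(p)}}$ in cohomological degree $p$, its class in $K_0(A)$ is $\sum_{p,j} (-1)^p m_j^{(p)} [P_j]$. Matching this with $[P_0]$ via freeness of $K_0(A)$ forces
\[\sum_p (-1)^p m_j^{(p)} = \delta_{j,0} \quad \text{for every } j \in Q_0.\]

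Applying $\cM \otimes_A (-)$ and passing to the $\mathbf{e}_0$-component converts each projective summand $P_j$ into the line bundle $\cM_j$ on $S$, so that the complex representing $(\cM \otimes_A^L L_\kappa)_0$ has $p$-th term $\bigoplus_j \cM_j^{\oplus m_j^{(p)}}$. Taking determinants (which are well-defined up to canonical isomorphism on homotopy classes by Knudsen--Mumford) and invoking Lemma~\ref{lm:Mcan}, we obtain
\[\cM_\kappa^{\textup{can}} \;\simeq\; \det \bigl( \cM \otimes_A^L L_\kappa \bigr)_0 \;=\; \bigotimes_{j,p} \cM_j^{\otimes (-1)^p m_j^{(p)}} \;=\; \bigotimes_j \cM_j^{\otimes \delta_{j,0}} \;=\; \cM_0,\]
as required.

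The main substantive step is the class equality $[L_\kappa] = [P_0]$ in $K_0(A)$, which is an immediate consequence of $\kappa \in \Lambda_r$ together with the derived equivalence; the remaining manipulations are formal, hinging on the fact that the determinant only sees the $K_0$-class of the complex rather than the specific resolution produced by $\iota_A^{-1}$.
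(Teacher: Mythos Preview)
Your proof is correct and follows essentially the same route as the paper: both arguments transport the isomorphism $\cO_{\bX} \simeq \cT_\kappa^\vee$ across the derived equivalence and read off the conclusion from a $K_0$/determinant computation, exactly as in Lemma~\ref{lm:kappa0}. The paper's proof is the one-liner that $\gamma_\kappa$ is carried to an isomorphism $L_0 \xrightarrow{\sim} L_\kappa$, leaving the determinant consequence implicit; you have simply spelled out that consequence via the free generation of $K_0(A)$ by the $[P_j]$, which is fine. One small notational point: the object whose projective resolution governs $\cM_\kappa^{\textup{can}}$ is really the left module $L_\kappa\bfe_0$ (this is what corresponds to $\cT_\kappa^\vee$ under the equivalence sending $P_i \mapsto \cT_i^\vee$), not the full bimodule $L_\kappa$ whose class would be $[A]=\sum_j[P_j]$; the paper is equally informal here, and your computation is unaffected once this is understood.
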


\begin{proof}
The derived equivalence $D^b(\bX) \rightarrow D^b(A)$ maps 
the isomorphisms $\gamma_{\kappa} \colon \cO_{\bX} \xto{\sim}\cT_\kappa^\vee$ in Lemma~\ref{lm:kappa0} to isomorphisms $L_0 \xto{\sim} L_{\chi}$.
\end{proof}

Putting Proposition~\ref{prop:can} and Lemma~\ref{lm:kappa0_1} together we have isomorphisms 
\begin{equation}\label{eq:gkappa}
g_\kappa \colon \cM_0 = \cM_\kappa^{\text{can}} \rightarrow \cM_\kappa
\end{equation}
for every $\kappa \in \Lambda_r$.
These give well-defined refined data on $\cM$  by Proposition~\ref{prop:doublestable}.
We therefore have a morphism $\bM^{\cT} \rightarrow \bM_\text{ref}$.

\begin{remark}
One may define the $g_\kappa$ in Equation~(\ref{eq:gkappa}) in an alternative fashion.
The isomorphism $\gamma_k \colon \cO_\bX \rightarrow \cT_\kappa^\vee$ of Lemma~\ref{lm:kappa0} induces an isomorphism $L_0 \xto{\sim} L_\kappa$ and in turn an isomorphism $(\cM \otimes_A^L L_0)_0 \xto{\sim} (\cM \otimes_A^L L_\kappa)_0$.
Noting that $\cM_0 = (\cM \otimes_A^L L_0)_0$ we may define $$g_k \colon \cM_0 = (\cM \otimes_A^L L_0)_0 \xto{\sim} (\cM \otimes_A^L L_\kappa)_0 \xto{\psi_{\kappa,0} \,\circ\, \xi_{\kappa}^{-1}} \cM_\kappa.$$
This definition coincides with the one given in Equation~\ref{eq:gkappa}.
We chose the definition in Equation~\ref{eq:gkappa} because it makes the proof of Theorem~\ref{thm:mutualinverses} more direct.
\end{remark}

\subsection{From refined representations to tensor stable} \label{ssec:reftopic}

We continue with our assumption that $\cT$ is a tilting bundle expressed as a direct sum of line bundles with $\cT_0 = \cO_{\bX}$. We have fixed $\iota^{-1} \colon D^b(\bX) \rightarrow K^b(\cT_i)$  and refinement data $\gamma_k\colon \cO_\bX \xto{\sim}\cT_\kappa^\vee$ as in Lemma~\ref{lm:kappa0}. Subsection~\ref{ssc:pictoref} yields a morphism $\bM^{\cT} \to \bM_{\textup{ref}}$. 

This morphism is rarely surjective so we first identify a locally closed subset of $\bM_\text{ref}$ that is a candidate for the image. There is a forgetful morphism $\bM_\text{ref} \rightarrow \bM$ that ignores refinement data.
We may then, as before, consider the locally closed substack $\bM^\circ$ of $\bM$ where all the $-\otimes_A L_i \colon \bM \dashto \bM$ are defined. We let $\bM_\text{ref}^\circ$ to denote the locally closed substack of $\bM_\text{ref}$ that maps to $\bM^\circ$ under the forgetful map. Note that the image of $\bM^{\cT} \rightarrow \bM_\text{ref}$ lies in $\bM_\text{ref}^\circ$.

We will work with the universal family on $\bM_\text{ref}^\circ$: this consists of universal line bundles $\cU_i$ for $i \in Q_0$, universal sections $u_a$ for $a \in Q_1$ and a lift $g \colon \underline{\Lambda}_Q/\Lambda_r \rightarrow \Vect(\bM)$ of the functor $\cU_? \colon \underline{\Lambda}_Q \rightarrow \Vect(\bM)$.
From this we aim to define tensor stability data $\psi_i \colon \cU \otimes_A^L L_i \rightarrow \cN_i \otimes_\bM \cU$ for $i\in Q_0$ and thus a morphism $\bM_\text{ref}^\circ \rightarrow \bM^{\cT}$.
We do this componentwise defining isomorphisms of sheaves $\psi_{i,j} \colon (\cU \otimes_A^L L_i)_j \rightarrow \cU_i \otimes_\bM \cU_j$.

Our triangulated equivalence to $K^b(P_i)$ gives us an expression of $(\cU \otimes_A^L L_i)_j$ as a complex whose components are direct sums of the bundles $\cU_i$.
Moreover, since our family is in $\bM_\text{ref}^\circ$ the cohomology of this complex is a line bundle concentrated in degree zero for every $i,j \in Q_0$.
Therefore $(\cU \otimes_A^L L_i)_j$ is canonically isomorphic to $\cU_{\chi}$ for some $\chi \in \Lambda_{Q}$.
The line bundle $\cU_i \otimes_\bM \cU_j$ is also of this form, it is $\cU_{\chi_i+\chi_j}$.

We compare our bundles $(\cU \otimes_A^L L_i)$ and $\cU_i \otimes_\bM \cU$ under pullback by the morphism $f \colon \bX \rightarrow \bM$: we have natural isomorphisms
\begin{equation} \label{eq:natisolinebdls}
f^*(\cU \otimes_A^L L_i) \xto{\sim} f^* \cU \otimes_A^L L_i \xto{\sim} 
\cT^{\vee} \otimes_A^L L_i \xto{\sim} \cT_i^\vee \otimes_{\bX} \cT^{\vee} \xto{\sim} 
f^*\cU_{i} \otimes_{\bX} f^* \cU.
\end{equation}
This implies that the images of $\cU_\chi \simeq (\cU \otimes_A^L L_i)_j$ and $\cU_i \otimes_\bM \cU_j$ under the composite $$\underline{\Lambda}_{Q} \xto{\cU_?} \Vect(\bM) \xto{f^*} \Vect(\bX)$$ are isomorphic.
Hence, by definition of $\Lambda_r$, the element $\chi-(\chi_i+\chi_j) \in \Lambda_r$.
The isomorphism $\psi_{i,j}$ is then defined by the composite:
\begin{equation}\label{eq:gtophi}
\psi_{i,j} \colon (\cU \otimes_A^L L_i)_j \xto{\sim} \cU_\chi \xto{g_{\chi-(\chi_i+\chi_j)}} \cU_{\chi_i + \chi_j} = \cU_i \otimes_\bM \cU_j.
\end{equation}
The isomorphisms $\psi_{i,j}$ for varying $j \in Q_0$ assemble to give the desired $A$-module isomorphism $\psi_i$. 
Indeed the commutative diagrams in Definition~\ref{def:refined} ensure compatibility with the $A$-module structure. 
This completes the construction of the morphism $\bM^\circ_{\text{ref}} \rightarrow \bM^{\Pic}$.

\begin{theorem}\label{thm:mutualinverses}
The morphisms are $\bM^\circ_{\textup{ref}} \rightarrow \bM^{\cT}$ and $\bM^{\cT} \rightarrow \bM_\textup{ref}^\circ$ are mutual quasi-inverses and give an isomorphism of stacks. 
\end{theorem}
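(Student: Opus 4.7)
The plan is to verify that the two compositions $\bM^{\cT} \to \bM_\textup{ref}^\circ \to \bM^{\cT}$ and $\bM_\textup{ref}^\circ \to \bM^{\cT} \to \bM_\textup{ref}^\circ$ are the identity by tracing the explicit formulas (\ref{eq:gkappa}) and (\ref{eq:gtophi}) on an arbitrary test scheme $S$ applied to the universal family. Naturality of both constructions in the family reduces everything to this universal check. Furthermore, using the monoidal natural isomorphism $\xi$ of Lemma~\ref{lm:equivNM}, one may normalise $\cN_i = \cM_i$ so that both formulas reduce to composites of canonical isomorphisms of line bundles.

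For the first composition, start with tensor stability data $\psi_i$ on $\cM \in \bM(S)$. Formula (\ref{eq:gkappa}) defines $g_\kappa$ as the composite $\cM_0 = \cM_\kappa^{\textup{can}} \to \cM_\kappa$ arising from Proposition~\ref{prop:can}. Plugging this $g$ back into (\ref{eq:gtophi}) builds $\psi'_{i,j}\colon (\cM \otimes_A^L L_i)_j \to \cM_i \otimes \cM_j$ by composing the canonical isomorphism $(\cM \otimes_A^L L_i)_j \simeq \cM_\chi^{\textup{can}}$ of Lemma~\ref{lm:Mcan} with $g_{\chi-\chi_i-\chi_j}$. Since $g_{\chi-\chi_i-\chi_j}$ was itself defined from the same canonical identification applied to $\psi_{\chi-\chi_i-\chi_j,0}$, the composite unwinds to $\psi_{i,j}$ directly. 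For the second composition, start with refinement data $g$. Formula (\ref{eq:gtophi}) produces $\psi_i$, and then applying (\ref{eq:gkappa}) extracts from $\psi_\kappa$, for $\kappa \in \Lambda_r$, precisely the input $g_\kappa$, because the construction of $\psi_\kappa$ on the $0$-component uses only $g_\kappa$ and the canonical isomorphism of Lemma~\ref{lm:Mcan}. It remains to verify that the $\psi_i$ obtained from refinement data are genuine $A$-module isomorphisms, which is precisely the content of the commutative diagram~(\ref{eq:refined}) in Definition~\ref{def:refined}.

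The main obstacle is keeping all the canonical identifications straight: both constructions pass through the same canonical isomorphism $(\cM \otimes_A^L L_\chi)_j \simeq \cM_{\chi+\chi_j}^{\textup{can}}$ coming from the chosen quasi-inverse $\iota^{-1}\colon D^b(\bX) \to K^b(\cT_i)$, and one must ensure that the Proposition~\ref{prop:can} isomorphism induced by $\psi$ is used in the same way in both directions. The cleanest way to handle this is to view $\cM_?^{\textup{can}}$ as a common intermediate and to interpret both tensor stability data and refinement data as isomorphisms of functors from $\cM_?^{\textup{can}}$ to $\cM_?$ (on $\underline{\Lambda}_Q$ and $\underline{\Lambda}_r$ respectively, using Lemma~\ref{lm:kappa0_1}). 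Once this common language is in place, the theorem reduces to the observation that (\ref{eq:gkappa}) and (\ref{eq:gtophi}) are inverse procedures for passing between these two packages of data, and that the extra compatibility conditions on each side (tensor stability in Definition~\ref{defn:doubletensor} and the refinement diagram~(\ref{eq:refined})) translate into each other under this passage.
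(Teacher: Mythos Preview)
Your proposal is correct and follows essentially the same approach as the paper: both arguments trace the explicit formulas (\ref{eq:phitog})/(\ref{eq:gkappa}) and (\ref{eq:gtophi}) through each other and observe they cancel. The paper's version is terser, isolating as the key identity $(\cU \otimes_A^L L_{i+j})_0 = (\cU \otimes_A^L L_i)_j$ and the recognition that the $\cU_\chi$ appearing in (\ref{eq:gtophi}) is exactly $\cU^{\textup{can}}_{\chi_i+\chi_j}$; your framing via $\cM_?^{\textup{can}}$ as a common intermediate is a more conceptual way of packaging the same cancellation.
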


\begin{proof}
Observe that $(\cU \otimes_A^L L_{i+j})_0 = (\cU \otimes_A^L L_i)_j$ and that $\cU_\chi$ in Equation~\ref{eq:gtophi} is $\cU^{\text{can}}_{\chi_i+\chi_j}$.
Start with refinement data $g_\kappa$ for $\kappa \in \Lambda_r$.
Then substituting Equation~\ref{eq:gtophi} in Equation~\ref{eq:phitog} gives that $\bM^\circ_{\textup{ref}} \rightarrow \bM^{\cT} \rightarrow \bM_\textup{ref}^\circ$ is the identity.
Similarly starting with tensor stability data $\psi_\chi$ for $\chi \in \Lambda_Q$, substituting Equation~\ref{eq:phitog} in Equation~\ref{eq:gtophi} gives that $\bM^\circ_{\textup{ref}} \rightarrow \bM^{\cT} \rightarrow \bM_\textup{ref}^\circ$ is the identity.
\end{proof}
Note that the construction of the inverse isomorphisms $\bM^{\cT} \simeq \bM^{\circ}_{\textup{ref}}$ is given explicitly by giving a bijective correspondence between tensor stability data and refinement data. We have thus proved Theorem~\ref{thm:samemoduli}.
\begin{corollary}\label{cor:mref}   
The stack $\bM_\textup{ref}^\circ$ is isomorphic to $\bX$.
\end{corollary}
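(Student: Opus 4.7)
The plan is to chain together Theorem~\ref{thm:mutualinverses} with Theorem~\ref{thm:stackfromquiver}. The former gives $\bM^{\circ}_{\textup{ref}} \simeq \bM^{\cT}$, so it suffices to identify $\bM^{\cT}$ with $\bX$, and for this we invoke the tilting-theoretic stack recovery already established.

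Concretely, I would first verify that the standing hypotheses of Section~\ref{sec:refined} (together with the tilting assumption of Subsection~\ref{ssc:pictoref}) imply those of Theorem~\ref{thm:stackfromquiver}, after identifying the collection $\{\cL_i\}$ with $\{\cT_i^\vee\}_{i \in Q_0 \setminus \{0\}}$ so that the two-sided tilting complexes appearing in the two sections match. Since each $\cT_i$ is a line bundle, the rank vector of $\cT$ is exactly $\vec{1}$, matching the dimension vector used throughout this section. The distinguished summand assumption of Theorem~\ref{thm:stackfromquiver} is satisfied by $\cT_0 = \cO_{\bX} \simeq c^*\cO_X \otimes_{\bX} \cO_{\bX}$, taking $\cL = \cO_{\bX}$ in the subgroup generated by the $\cL_i$. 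The ampleness of $(\cT,\cL_\bullet)$ should be taken as an implicit hypothesis inherited from the referenced theorem.

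With the hypotheses in place, Theorem~\ref{thm:stackfromquiver} provides a natural isomorphism $\bX \simeq \bM^{L_\bullet}_{\vec{1}} = \bM^{\cT}$, and Theorem~\ref{thm:mutualinverses} provides $\bM^{\cT} \simeq \bM^{\circ}_{\textup{ref}}$. Composing yields the stated isomorphism $\bX \simeq \bM^{\circ}_{\textup{ref}}$.

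There is no real obstacle here; the only mild subtlety lies in the bookkeeping between the two sections, since this section tacitly fixes $\cT_0 = \cO_{\bX}$, normalises $\cU_0 = \cO_{\bM}$, and works with refinement data built relative to $\Lambda_r$, whereas Section~\ref{sc:tensor_stable} phrases things in terms of $\cL_\bullet$ and tensor stability data. The translation is exactly the explicit bijection between tensor stability data $\psi$ and refinement data $g$ supplied by the proof of Theorem~\ref{thm:mutualinverses}, so no additional computation is needed.
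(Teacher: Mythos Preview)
Your proposal is correct and matches the paper's intended argument: the corollary is stated immediately after Theorem~\ref{thm:mutualinverses} with no separate proof, and the only way to conclude is precisely the composition $\bM^{\circ}_{\textup{ref}} \simeq \bM^{\cT} \simeq \bX$ via Theorem~\ref{thm:stackfromquiver}. Your remark that the ampleness hypothesis of Theorem~\ref{thm:stackfromquiver} must be carried along as an implicit standing assumption is a fair observation; the paper does not restate it in Section~\ref{sec:refined}, but it is indeed needed for the second isomorphism.
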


\section{Global quotient presentation of $\bM_\textup{ref}$}\label{sc:GIT}

The moduli space of refined representations, as in the `not-refined' version, has a natural global quotient presentation.
Given a refined quiver representation $(\cM_i, m_a, g_\kappa)$ over $k$, let $\text{Func}_{\otimes, \cM_i}(\underline{\Lambda}_{Q}/\Lambda_r, \Vect(k))$ denote the set of monoidal lifts $\underline{\Lambda}_{Q}/\Lambda_r\rightarrow \Vect(k)$ of the functor $\cM_? \colon \underline{\Lambda}_{Q} \rightarrow \Vect(k)$.
Picking a basis for $\cM_i$ to identify them with $k$ enables us to view the $m_a$ as elements of $\bA^1$ and $g$ as elements of $\Lambda_r^{\vee}$. We thus obtain an element of
\begin{equation*}
\cR(Q) := \oplus_{a \in Q_1} \Hom(\cM_{t(a)}, \cM_{h(a)}) \times \text{Func}_{\otimes, \cM_i}(\underline{\Lambda}_{Q}/\Lambda_r, \Vect(k))\simeq \bA^{Q_1} \times \Lambda_r^{\vee}. \end{equation*}
We use $\cR_A$ to denote the {\em refined representation space} which is the closed subscheme of $\cR(Q)$ whose elements descend to representations of $A \simeq kQ/I$ and furthermore make the diagrams in (\ref{eq:refined}) commute.
The gauge group
\begin{equation*}
\GL(\vec{1}) := \oplus_{i \in Q_0} \GL(\cM_i) \simeq \bG_m^{Q_0}.
\end{equation*}
naturally acts on $\cR_A$ by change of basis.
Note that the diagonal one-parameter subgroup 
$$\Delta = \{(\lambda, \ldots,\lambda) | \lambda \in \bG_m\} \leq \GL(\vec{1})$$ 
acts trivially so there is an induced action of $\PGL(\vec{1}) := \GL(\vec{1}) / \Delta$.
Taking the quotient by $\PGL(\vec{1})$ as opposed to $\GL(\vec{1})$ amounts to considering the rigidified moduli stack as opposed to the unrigidified version. 
An argument similar to the proof of \cite[Proposition 3.9]{Abd} shows that $\bM_{\text{ref}}$ is isomorphic to the quotient stack $[\cR_A/ \PGL(\vec{1})]$.

\subsection{$\bM_{\text{ref}}^\circ$ and GIT stability}

For this subsection we assume $\cT$ is tilting.
We study the locus $\bM_{\text{ref}}^\circ$, i.e.\ the one isomorphic to $\bX$, using GIT. First we make the following observation.

\begin{lemma}\label{lm:open}
The subset $\bM_{\textup{ref}}^\circ \subset \bM_{\textup{ref}}$ is open.
\end{lemma}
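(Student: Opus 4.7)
The plan is to reduce the defining conditions of $\bM_\textup{ref}^\circ$ to a single open requirement of Tor vanishing. Recall from Section~\ref{sc:tensor_stable} that $\bM^\circ \subset \bM$ is defined by (a) vanishing of higher Tors $\Tor^A_{>0}(\cM, L_i) = 0$ for each $i$, together with (b) the requirement that $H_0(\cM \otimes_A^L L_i)$ be a flat family of dimension vector $\vec{1}$. Condition (a) is open by upper semi-continuity of Tor, so its preimage under the forgetful morphism $\bM_\textup{ref} \to \bM$ is open in $\bM_\textup{ref}$. The work will be to show that (b) is automatic whenever (a) holds, so $\bM_\textup{ref}^\circ$ coincides with this open preimage.

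To verify this, I would fix representatives of each $L_i$ as a bounded complex of projectives in $K^b(P_?)$ via the quasi-inverse $\iota^{-1}$, and write $n_{p,i,k}$ for the number of $P_k$-summands appearing in degree $p$ of this complex. Since $\cM$ has dimension vector $\vec{1}$ and $\cM \otimes_A P_k \simeq \cM_k$ is a line bundle, the vertex-$k$ component of $\cM \otimes_A^L L_i$ is a complex whose total rank in degree $p$ is $n_{p,i,k}$. Hence its Euler characteristic at vertex $k$ equals the integer $r_{i,k} := \sum_p (-1)^p n_{p,i,k}$, which depends only on $L_i$ and not on the particular refined representation $\cM$.

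When condition (a) holds, $\cM \otimes_A^L L_i$ is concentrated in degree zero and one has $\dim H_0(\cM \otimes_A^L L_i)_k = r_{i,k}$, so (b) reduces to the purely numerical requirement $r_{i,k} = 1$ for every $i,k \in Q_0$. The final step is to apply Corollary~\ref{cor:mref}, which identifies $\bM_\textup{ref}^\circ$ with the nonempty stack $\bX$: at any of its points both (a) and (b) hold, forcing $r_{i,k} = 1$, and this value is then universal. Thus (b) is automatic given (a), and $\bM_\textup{ref}^\circ$ equals the open preimage of the Tor-vanishing locus in $\bM_\textup{ref}$.

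The main obstacle is isolating the right reformulation of condition (b): without observing that the Euler characteristic at each vertex is an $\cM$-independent invariant of $L_i$ (an observation that uses dimension vector $\vec{1}$ in an essential way), the dimension-vector condition could a priori cut out a nontrivial locally closed subset. Once this invariance is in hand, the nonemptiness of $\bM_\textup{ref}^\circ$ pins the invariant down to $1$ everywhere and delivers the openness.
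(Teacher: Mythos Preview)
Your argument is correct and follows essentially the same strategy as the paper's proof: both establish that the complex $(\cM \otimes^L_A L_i)_k$ has Euler characteristic one (verified by restricting to a point in the image of $\bX$, which is exactly your appeal to Corollary~\ref{cor:mref}), and then deduce that the locus where it fails to be a line bundle concentrated in degree zero is closed. You phrase this via pointwise upper semicontinuity of cohomology dimensions, while the paper works with the universal family on $\bM_{\textup{ref}}$ and argues that the nonzero-degree cohomology sheaves and the torsion part of $H_0$ have closed support; these are two packagings of the same fact.

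One small terminological caveat: condition (a) in the paper is ``$H_p(\cM \otimes^L_A L_i) = 0$ for all $p \neq 0$'', not merely vanishing of higher $\Tor$. Since $L_i$ is a genuine two-sided tilting \emph{complex} rather than a module, its projective model in $K^b(P_?)$ may sit in both positive and negative degrees, so there can be homology on either side of zero. Your later sentence ``when condition (a) holds, $\cM \otimes_A^L L_i$ is concentrated in degree zero'' makes clear you intend the full condition, and upper semicontinuity applies equally well in both directions, so the argument is unaffected.
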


\begin{proof}
Fix a pair of vertices $i,j$ and consider the object $(\cU \otimes^L_A L_j)_i \in D^b(\bM_\text{ref})$ where $\cU$ is the universal family on  $\bM_{\text{ref}}$. This is a complex is of rank one.
First note that when restricted to the image of $\bX$, this complex is concentrated in degree zero and is quasi-isomorphic to the line bundle $\cT_i^{\vee} \otimes_{\bX} \cT_j^\vee$. This implies that the cohomology sheaves at nonzero degrees are torsion and furthermore, that the cohomology sheaf in degree zero is of rank one.
The nonzero cohomology sheaves and the torsion part of the zero cohomology sheaf are supported on a closed set that does not contain the image of $X$. The complement of this subset is $\bM_{\text{ref}}^\circ$ and hence the result.
\end{proof}

Lemma~\ref{lm:open} hints that perhaps $\bM^\circ_{\textup{ref}}$ is carved out by some GIT stability parameter. 

As discussed above, $\bM_{\text{ref}}$ is isomorphic to the quotient stack $[\cR_A/ \PGL(\vec{1})]$.
Following King \cite{Ki}, one may define an intrinsic notion of $\theta$-stability for refined representations equivalent to the GIT stability of the $\PGL(\vec{1})$ action on $\cR_A$,  see \cite[Definition 3.4]{Abd}. 
Observe that $\PGL(\vec{1})$ maybe identified with the subgroup $\prod_{i\neq 0} \GL(\cM_i) \subset \GL(\vec{1})$.
This in turn identifies $\Lambda_Q$ with the characters of $\PGL(\vec{1})$ so we may consider the $\theta$-semistable points in $\cR_A$ for $\theta \in \Lambda_Q$.
We will use $\bM_{\textup{ref}}^\theta$ to denote the $\theta$-semistable locus of $\bM_{\textup{ref}}$.

\begin{definition}
Given a generic stability parameter $\theta \in \Lambda_Q$ we say {\em $\theta$ stabilises $\bX$} if $f(x)$ is $\theta$-stable for all $x \in \bX$.
In other words, {\em $\theta$ stabilises $\bX$} if $f \colon \bX \rightarrow \bM_{\textup{ref}}$ factors through $\bM_{\textup{ref}}^\theta$.
\end{definition}

\begin{theorem}\label{thm:stab}
Let $\theta \in \Lambda_Q$ be a stability parameter that stabilises $\bX$ then the stacks $\bM_{\textup{ref}}^\circ \simeq \bX$ are isomorphic to a connected component of $\bM_{\textup{ref}}^\theta$.
\end{theorem}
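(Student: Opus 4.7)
The plan is to realise $\bM_\textup{ref}^\circ$ as a clopen substack of $\bM_\textup{ref}^\theta$; once this is done, since $\bX \simeq \bM_\textup{ref}^\circ$ is connected, it is automatically a single connected component.

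For the open inclusion, Lemma~\ref{lm:open} already tells me that $\bM_\textup{ref}^\circ$ is open in $\bM_\textup{ref}$. By Corollary~\ref{cor:mref}, the morphism $f \colon \bX \to \bM_\textup{ref}$ factors as $\bX \simeq \bM_\textup{ref}^\circ \hookrightarrow \bM_\textup{ref}$. The hypothesis that $\theta$ stabilises $\bX$ is precisely the statement that $f$ additionally factors through $\bM_\textup{ref}^\theta$; combining the two factorisations, $\bM_\textup{ref}^\circ$ sits inside $\bM_\textup{ref}^\theta$ as an open substack.

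For closedness, I would use properness of $\bX$. Since $\bX$ is a smooth projective stack, $\bM_\textup{ref}^\circ$ is proper over $\Spec k$. For a generic stability parameter $\theta$, strictly semistable points vanish on $\cR_A$, so the GIT quotient $\bM_\textup{ref}^\theta \simeq [\cR_A^{\theta\text{-ss}}/\PGL(\vec{1})]$ is a separated Deligne--Mumford stack by King's construction \cite{Ki}. A proper morphism to a separated stack that is also a monomorphism (as any open immersion is) must be a closed immersion. Hence $\bM_\textup{ref}^\circ \hookrightarrow \bM_\textup{ref}^\theta$ is clopen, and connectedness of $\bX$ upgrades ``clopen substack'' to ``single connected component''.

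The main obstacle is verifying separatedness of $\bM_\textup{ref}^\theta$: this requires that genericity of $\theta$ excludes strictly semistable points on all of $\cR_A$, not merely near the image of $\bX$. This is the standard wall-and-chamber point for GIT stability of quiver representations, but in the refined setting one should check that it survives the restriction from $\cR(Q)$ to the subscheme $\cR_A$ cut out by the relations of $I$ and the compatibility diagrams~\eqref{eq:refined}. Once this separatedness is in hand, the proper-monomorphism-is-closed-immersion principle closes out the argument.
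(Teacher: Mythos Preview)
Your argument is correct and follows the same two-step strategy as the paper: openness via Lemma~\ref{lm:open} and Corollary~\ref{cor:mref}, closedness via properness of $\bX$. The only difference lies in how the closedness step is executed. You work directly at the stack level, invoking that a proper monomorphism into a separated stack is a closed immersion, which forces you to confront separatedness of $\bM_\textup{ref}^\theta$ head-on. The paper instead descends to coarse moduli spaces: the image of the projective scheme $X$ in the GIT quotient of $\cR_A$ is closed simply because the GIT quotient is projective, and this closedness lifts back to the stack since (for generic $\theta$) the coarse moduli map is a homeomorphism on underlying spaces. This route bypasses your stated obstacle entirely, as projectivity of the GIT quotient is automatic from King's construction and does not depend on analysing strictly semistable loci inside the refined representation space.
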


\begin{proof}
The fact that $\theta$ stabilises $\bX$ gives a morphism $f \colon \bX \rightarrow \bM_{\text{ref}}^\theta$.
This combined with Corollary~\ref{cor:mref} and Lemma~\ref{lm:open} gives that $f$ is an open embedding of $\bX$ in $\bM_{\text{ref}}^\theta$.

We also have that the image of the coarse moduli space $X$ under the coarse moduli map induced by $f$ is closed in the coarse moduli space of $\bM_\text{ref}^\theta$. 
Since $\bX$ is embedded in $\bM_\text{ref}^\theta$ this implies that $f$ is also a closed embedding.
The result follows.
\end{proof}

\begin{remark}
The existence of a $\theta$ that stabilises $\bX$ is part of the hypothesis of Theorem~\ref{thm:stab}.
We expect such $\theta \in \Lambda_Q$ to exist in general.
In fact, in Lemma~\ref{lm:generic} below, we earmark a candidate.
\end{remark}

\begin{lemma}\label{lm:generic}
There exists a stability condition $\theta \in \Lambda_Q$ so that $\theta$ is generic, i.e.\ $\theta$-semistable implies $\theta$-stable, and $f^*(\theta)$ is the pullback of a very ample line bundle on the coarse moduli space $X$ of $\bX$.
\end{lemma}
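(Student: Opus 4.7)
The plan is to produce $\theta$ in two stages: first to construct some $\theta_0 \in \Lambda_Q$ whose image $f^*\theta_0$ is the pull-back of a very ample line bundle on $X$, and then to perturb $\theta_0$ to achieve the genericity condition. For the first stage, I would use that, since $\cT$ is tilting, the classes $\{[\cT_i]\}_{i \in Q_0}$ form a $\bZ$-basis of $K_0(\bX)$: under the derived equivalence with $K_0(A)$ they correspond to the indecomposable projectives $P_i$. Picking any very ample $L$ on $X$, expand $[c^*L] = \sum_{i \in Q_0} b_i [\cT_i]$ in $K_0(\bX)$ and apply the determinant homomorphism $\det \colon K_0(\bX) \to \Pic(\bX)$. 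Since each $\cT_i$ is a line bundle we have $\det \cT_i = \cT_i$, so $c^*L \cong \bigotimes_{i \in Q_0} \cT_i^{\otimes b_i}$. The factor at $i=0$ is trivial because $\cT_0 = \cO_{\bX}$, and setting $\theta_0 := (-b_i)_{i \in Q_0 \setminus \{0\}} \in \Lambda_Q$ together with $f^*\chi_i = \cT_i^{\vee}$ gives $f^*\theta_0 = c^*L$ as required.

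For the second stage, recall that the non-generic $\theta \in \Lambda_Q \otimes \bR$ form a finite union of proper linear hyperplanes $\{H_{\vec{d}'}\}$ indexed by sub-dimension vectors $\vec{d}' \in \{0,1\}^{Q_0}$ realised by non-trivial proper submodules of some representation of dimension vector $\vec{1}$. Working with the subgroup $V := (f^*)^{-1}(c^*\Pic(X)) \subseteq \Lambda_Q$ (rather than the single fibre $\theta_0 + \Lambda_r$) provides additional flexibility: by the first stage the image of $V \otimes \bR$ under $f^*$ meets the open ample cone of $X$, so the preimage $V_+ \subseteq V \otimes \bR$ of the ample cone is a non-empty open cone. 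Provided that no $H_{\vec{d}'}$ contains $V \otimes \bR$ entirely, the subcone $V_+ \setminus \bigcup_j H_{\vec{d}'}$ is dense and open in $V_+$, and hence contains an integer point $\theta_1$. Replacing $\theta_1$ by a sufficiently large positive multiple ensures that $f^*\theta_1 = c^*L_1$ for $L_1$ actually very ample, and rescaling by a positive integer preserves genericity since the $H_{\vec{d}'}$ are linear hyperplanes through the origin.

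The main obstacle is therefore to rule out the pathological case $V \otimes \bR \subseteq H_{\vec{d}'}$ for some realised $\vec{d}'$. Such a containment is equivalent to the linear functional on $\Lambda_Q$ encoding $\vec{d}'$ descending via $f^*$ to a functional on the quotient $\Pic(\bX)/c^*\Pic(X)$. Since the sub-dimension vectors which actually arise are heavily constrained by the combinatorics of $A = kQ/I$, and the quotient $\Pic(\bX)/c^*\Pic(X)$ reflects the stacky inertia of $\bX$, I expect this containment to be excluded by a direct case analysis exploiting that $\cT$ is a sum of line bundles indexing the vertices $Q_0$; pinning down a uniform argument is the delicate technical point, but once it is in place the lemma follows from the openness and rescaling observations above.
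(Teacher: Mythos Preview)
Your first stage, producing a $\theta_0$ with $f^*\theta_0 = c^*L$ via the $K_0$-basis $\{[\cT_i]\}$ and the determinant map, is fine; it is essentially a justification of the surjectivity of $f^* \colon \Lambda_Q \to \Pic(\bX)$, which the paper simply asserts.

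The genuine gap is in your second stage. You work with ordinary King stability and are left with the obstacle you name yourself: ruling out that some wall $H_{\vec{d}'}$ contains all of $V \otimes \bR$. You propose to handle this by a case analysis on realised sub-dimension vectors, but you do not carry it out and admit you lack a uniform argument. The point you are missing is structural, not combinatorial: we are on $\bM_{\textup{ref}}$, and the relevant stability notion is that of \cite[Definition~3.4]{Abd} for \emph{refined} representations. There, a destabilising filtration $\cM_\bullet$ must be compatible with the refinement data $g$, which forces $\kappa(\cM_\bullet)=0$ for every $\kappa \in \Lambda_r$. Consequently the pairing of any admissible $\vec{d}'$ with $\theta$ factors through $\Lambda_Q/\Lambda_r$; equivalently, the entire wall-and-chamber decomposition is pulled back from $(\Lambda_Q/\Lambda_r)\otimes\bR \cong \Pic(\bX)\otimes\bR$.

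With this in hand your obstacle evaporates. Since $\Lambda_r \subseteq V$ and $\Pic(\bX)_\bQ \cong \Pic(X)_\bQ$, the image of $V\otimes\bR$ in $\Pic(\bX)\otimes\bR$ is everything, so no proper hyperplane pulled back from $\Pic(\bX)\otimes\bR$ can contain it. More directly, both conditions you want --- genericity of $\theta$ and ampleness of $f^*\theta$ --- are now open conditions on the \emph{same} space $\Pic(\bX)_\bQ \cong \Pic(X)_\bQ$, and the lemma follows by taking any point in their (non-empty) intersection and clearing denominators. No case analysis on sub-dimension vectors is needed.
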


\begin{proof}
Definition 3.4 of \cite{Abd} only tests $\theta \in \Lambda_{Q}$ against filtrations $\cM_\bullet$ of $(\cM_i, m_a)$ that satisfy $\kappa(\cM_\bullet)=0$ for all $\kappa\in \Lambda_r \subset \Lambda_{Q}$.
In other words, stability is only dependent on the class of $\theta$ in $\Lambda_{Q}/\Lambda_r$.
Now $\cT$ generates $D^b(\bX)$ so the homomorphism $f^*\colon \Lambda_{Q} \rightarrow \Pic(\bX)$ is surjective and we have an isomorphism $\Lambda_{Q}/\Lambda_r \simeq \Pic(\bX)$.
Furthermore, ampleness is a generic condition in $\Pic(X)_\bQ \simeq \Pic(\bX)_\bQ$.
Therefore, if necessary, one may perturb $\theta$ so that it is generic and $f^*(\theta)$ is pulled back from the ample cone of $X$.
\end{proof}

\begin{remark}\label{rm:generic}
The statement corresponding to Lemma~\ref{lm:generic} is not true when applied to the moduli of quiver representations of the tilting quiver of a general projective DM stack.
\end{remark}

\subsection{Mori-dream stacks}\label{ssec:nice}
In \cite{AU}, Abdelgadir-Ueda recover weighted projective lines as moduli stacks of refined representations. The method of proof there is quite different from the approach in Subsections~\ref{ssc:pictoref}, \ref{ssec:reftopic} and is based on recovering the Cox ring of $\bX$ (see the introductory blurb to Section~\ref{sec:refined}). In this subsection, we give a general account of this method of proof. What follows does not require the tilting assumption on $\cT$ so we drop it from here on. We will though, assume that $\bX$ is a Mori-dream stack, i.e.\ that $\bX$ has a finitely generated Cox ring $R$ and that $$\bX \simeq \bigg[\frac{\Spec(R) \setminus V(B_\theta)}{\Pic(\bX)^\vee}\bigg]$$ for $B_\theta$ the irrelevant ideal given by some ample line bundle $\theta \in \Pic(\bX)$. Note that the Peirce components $\bfe_i A \bfe_j$ of $A$ are all given by the corresponding graded component of $R$. 

The moduli space $\bM$ is also a global quotient (see \cite[Sections 4 \& 5]{Ki}), we spell this out here to set notation.
Fixing $k$ as the vector spaces on the vertices, every point of $\bA^{Q_1}$ defines a representation of $Q$.
Those that descend to give $A$-modules form a closed subset that we will denote $\Spec(S) \subset \bA^{Q_1}$.
The ring $S$ is naturally graded by $\Lambda_Q$, in fact the graded component $S_{j-i}$ has a natural $k$-basis given by $\bfe_i A \bfe_j$. 
We then have that $\bM \simeq [\Spec(S)/\PGL(\vec{1})]$.
The morphism $f \colon \bX \rightarrow \bM$ is induced from the  tautological homomorphism of partial Cox rings $h \colon S \rightarrow R$ with the group homomorphism $f^* \colon \Lambda_Q \rightarrow \Pic(\bX)$ intertwining the grading.

From the introduction to Section~\ref{sec:refined}, there are two issues to address for stack recovery in the case of Mori-dream stacks. Firstly, given a family $(\cM_i, m_a)$ of $A$-modules, the issue of reconstructing the $\Pic(\bX)$-graded algebra $\oplus_{\chi \in \Pic(\bX)} \cM_{\chi}$ is resolved by introducing refinement data. We now address the other issue of matching up the algebra morphism $R \to \oplus_{\chi \in \Pic(\bX)} \cM_{\chi}$ with the data of the $A$-module action on $\oplus_i \cM_i$. As discussed just above Definition ~\ref{def:refined}, for any two pairs of elements $(i,j)$ and $(k,l)$ of $Q_0$ for which $(\chi_i + \chi_l)-(\chi_j + \chi_k) \in \Lambda_r$ we have an isomorphism of Peirce components $\gamma(i,j;k,l) \colon \bfe_i A \bfe_j \xrightarrow{\sim} \bfe_k A \bfe_l$. 
This induces an isomorphism $\gamma(i,j;k,l) \colon S_{j-i} \xrightarrow{\sim} S_{l-k}$.
Hence, graded components of the Cox ring $R$ may become ``separated'' in the algebra $A$ and hence also in $S$. This separation of components gives rise to some obvious elements of $\ker h$ that have the form 
\begin{equation}  \label{eq:deindex}
    y - \gamma(i,j;k,l) (y)
\end{equation}
where $y \in S_{j-i}$ and $i,j,k,l \in Q_0$ are such that $(\chi_i + \chi_l)-(\chi_j + \chi_k) \in \Lambda_r$. We let $I_{\textup{de}}\subseteq \ker h$ be the $\Pic(\bX)$-graded ideal generated by these elements.

\begin{definition} \label{def:captureCox}
We say our bundle $\cT$ {\em captures the Cox ring of $\bX$} if $h$ is surjective and descends to an isomorphism $S/I_{de} \simeq R$.
\end{definition}

In the following we demonstrate how the isomorphism $\bX \simeq \bM_{\text{ref}}^\theta$ can be obtained from the ring homomorphism $h$ when $\cT$ captures the Cox ring of $\bX$.
To further clarify, this is precisely when the difference between $S$ and $R$ is only due to the separation of Peirce components discussed above.

Note that $\PGL(\vec{1}) = \Lambda_Q^\vee$, so the $\Lambda_r^{\vee}$-cover of $\bM$ introduced in Subsection~\ref{ssec:refined} is $\tilde{\bM} = [\Spec S/ \Pic(\bX)^{\vee}]$. This can be expressed as a $\PGL(\vec{1})$-quotient as follows. Now $\PGL(\vec{1})$ acts diagonally on $\Spec S \times \Lambda_r^\vee$ so $\tilde{\bM} = [\Spec (S) \times \Lambda_r^{\vee} / \PGL(\vec{1})]$. Furthermore, 
$$[\Spec R/\Pic(\bX)^\vee] \simeq [V/\PGL(\vec{1})]$$ where $V$ is the $\PGL(\vec{1})$-orbit of $\Spec R \times 1 \subseteq \Spec S \times \Lambda_r^\vee$.
The refined representation space $\cR_A$ is a closed subscheme of $\Spec S \times \Lambda_r^\vee$ cut-out by the commutative diagrams (\ref{eq:refined}).
We'll eventually show that in this setting $\cR_A = V$.

We clarify what $V$ is. 
Let $\frakm\triangleleft k\Lambda_r$ be the maximal ideal corresponding to $1 \in \Lambda_r^\vee$. 
Then $V$ is the closed subscheme defined by the ideal sheaf
$$ I_V := \bigcap_{\tau \in \PGL(\vec{1})} \tau\cdot\left(I \otimes k\Lambda_r + S \otimes \frakm \right)$$
To compute this, we need some notation. 
Recall that $I$ is only $\Pic(\bX)$-homogeneous. Let $s \in I$ be a $\Pic(\bX)$-homogeneous element. 
Then we can express it as a sum of $\Lambda_Q$-homogeneous elements $s = s_1 + \ldots + s_m$ all of whose degrees lie in the same coset of $\Lambda_r$. 
Hence we can pick $\kappa_i \in \Lambda_r$ such that $s^h := s_1 \otimes \kappa_1 + \ldots + s_m \otimes \kappa_m$ is $\Lambda_Q$-homogeneous. 
The {\em homogenisation} $s^h$ is only defined up to multiplication by some $\kappa \in \Lambda_r$. 

\begin{lemma}
Let $\Sigma \subset I$ be a set of $\Pic(\bX)$-homogeneous generators of $I$ then $I_V = \langle s^h \,|\, s \in \Sigma \rangle.$ 
\end{lemma}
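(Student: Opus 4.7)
The plan is to identify $I_V$ as the maximal $\Lambda_Q$-graded ideal contained in $\pi^{-1}(I)$, where $\pi\colon S\otimes k\Lambda_r\to S$ is the augmentation map $\kappa\mapsto 1$, noting $\pi^{-1}(I)=I\otimes k\Lambda_r+S\otimes\frakm$. Since $\PGL(\vec{1})=\Lambda_Q^\vee$ acts on $S\otimes k\Lambda_r$ through the $\Lambda_Q$-grading (with $k\Lambda_r$ graded via $\Lambda_r\hookrightarrow\Lambda_Q$), an ideal is $\PGL(\vec{1})$-invariant exactly when it is $\Lambda_Q$-graded, so the intersection of the $\tau$-translates of $\pi^{-1}(I)$ picks out its largest $\Lambda_Q$-graded sub-ideal. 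In particular, the degree-$\alpha$ piece of $I_V$ is
\[ (I_V)_\alpha = \pi^{-1}(I)\cap (S\otimes k\Lambda_r)_\alpha. \]

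The key observation is that $\pi$ restricts to a $k$-linear isomorphism $\pi_\alpha\colon(S\otimes k\Lambda_r)_\alpha\xrightarrow{\sim} S_{\bar\alpha}$ onto the $\Pic(\bX)$-homogeneous component of $S$ of degree $\bar\alpha\in\Lambda_Q/\Lambda_r\simeq\Pic(\bX)$. Indeed, the source decomposes as $\bigoplus_{\kappa\in\Lambda_r}S_{\alpha-\kappa}\otimes\kappa$, which $\pi$ maps summand-by-summand onto $\bigoplus_{\kappa\in\Lambda_r}S_{\alpha-\kappa}=S_{\bar\alpha}$; the latter is a direct sum in $S$ because these summands sit in distinct $\Lambda_Q$-components. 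As a byproduct, $S\otimes\frakm=\ker\pi$ has no nonzero $\Lambda_Q$-homogeneous elements, and $\pi_\alpha$ induces an isomorphism $(I_V)_\alpha\simeq I\cap S_{\bar\alpha}=I_{\bar\alpha}$.

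It then suffices to verify that $J:=\langle s^h\mid s\in\Sigma\rangle$ satisfies $\pi_\alpha(J_\alpha)=I_{\bar\alpha}$ for every $\alpha\in\Lambda_Q$: together with the containment $J\subseteq I_V$ (each $s^h$ is $\Lambda_Q$-homogeneous with $\pi(s^h)=s\in I$) and the injectivity of $\pi_\alpha$, this forces $J_\alpha=(I_V)_\alpha$ for all $\alpha$, hence $J=I_V$. For the nontrivial inclusion $\pi_\alpha(J_\alpha)\supseteq I_{\bar\alpha}$, take a $\Pic(\bX)$-homogeneous element $u\in I_{\bar\alpha}$ and use that $\Sigma$ generates $I$ to write $u=\sum_k c_k s^{(k)}$ with $s^{(k)}\in\Sigma$ and $c_k\in S$ a $\Pic(\bX)$-homogeneous element of degree $\bar\alpha-\overline{\deg s^{(k)}}$. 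Fix any homogenisation $s^{(k),h}$ of $\Lambda_Q$-degree $\gamma_k$, and use the isomorphism $\pi_{\alpha-\gamma_k}$ to lift $c_k$ to a $\Lambda_Q$-homogeneous $\tilde c_k\in(S\otimes k\Lambda_r)_{\alpha-\gamma_k}$. Then $t':=\sum_k\tilde c_k\cdot s^{(k),h}\in J_\alpha$ and $\pi_\alpha(t')=u$, as required.

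The only real subtlety I anticipate is managing the $\Lambda_Q$-degree shifts in the presence of the ambiguity $s^h\mapsto\kappa\cdot s^h$ for $\kappa\in\Lambda_r$, but the existence of $\pi_\alpha$ as a bijection sidesteps this, since it allows unique lifting of any element of $S_{\bar\alpha}$ to a prescribed $\Lambda_Q$-degree without tracking these choices by hand.
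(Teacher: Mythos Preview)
Your proof is correct. The paper states this lemma without proof, so there is nothing to compare against; your argument---identifying $I_V$ as the largest $\Lambda_Q$-graded ideal inside $\pi^{-1}(I)=I\otimes k\Lambda_r+S\otimes\frakm$ via the torus action, and then using the degreewise isomorphism $\pi_\alpha\colon(S\otimes k\Lambda_r)_\alpha\xrightarrow{\sim}S_{\bar\alpha}$ to match $(I_V)_\alpha$ with $I_{\bar\alpha}$ and lift a generating expression $u=\sum c_k s^{(k)}$ to $\sum\tilde c_k\,s^{(k),h}\in J_\alpha$---is a clean and complete way to fill the gap.
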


Under the assumption that $\cT$ captures the Cox ring of $\bX$ we have that $I$ is generated by elements of the form $s:= y - \gamma (y)$ for some $\Lambda_Q$-homogeneous element $y \in S$.
Its homogenisation $s^h$ can be encoded moduli-theoretically on the refined representation space as follows.
Take $(\cM_i, m_a, g)$ a $k$-point of $\bM_\text{ref}$.
Suppose that $i,j,k,l \in Q_0$ are such that $\kappa:= (\chi_i + \chi_l)-(\chi_j + \chi_k) \in \Lambda_r$. 
The refinement data $g$ thus gives an isomorphism $g_{\kappa} \colon \Hom_k(\cM_i,\cM_j) \to \Hom_k(\cM_k,\cM_l)$. 
Suppose $a \in \bfe_i A \bfe_j$ so we have a  multiplication by $a$ map $m_a \colon \cM_i \to \cM_j$ and multiplication by $\gamma(a)$ map $m_{\gamma(a)} \colon  \cM_{k} \to \cM_{l}$. Then the relation $s^h$ corresponds to commutativity of the following diagram 
\begin{equation}
\begin{CD}
k @>m_a \otimes \cM_{i}^\vee>> \cM_{j} \otimes \cM_{i}^\vee\\
@VV\text{id}V @VV{g_\kappa}V\\
k @>m_{\gamma(a)}\otimes \cM_{k}^\vee>> \cM_{l} \otimes \cM_{k}^\vee.
\end{CD}
\end{equation}
This is precisely the tensor product of the diagram (\ref{eq:refined}) by $(\cM_i \otimes \cM_k)^\vee$.
Hence $\cR_A = V$.

\begin{corollary}
If the bundle $\cT$ captures the Cox ring of $\bX$ then the ring homomorphisms $h$ along with the intertwining homomorphism $f^*$ give an isomorphism of stacks $$\bigg[\frac{\Spec R}{\Pic(\bX)^\vee}\bigg] \simeq \bigg[\frac{\cR_A}{\PGL(\vec{1})}\bigg].$$
\end{corollary}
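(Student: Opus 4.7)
The plan is to combine the identification $\cR_A = V$ established in the preceding discussion with an elementary manipulation of quotient stacks. The substantive content lies in the $\cR_A = V$ identification; what remains is essentially formal.

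First I would record the relevant group theory. Under the identifications $\PGL(\vec{1}) = \Lambda_Q^\vee$ and $\Pic(\bX) = \Lambda_Q/\Lambda_r$, dualising the short exact sequence $0 \to \Lambda_r \to \Lambda_Q \to \Pic(\bX) \to 0$ identifies $\Pic(\bX)^\vee$ with the stabiliser of $1 \in \Lambda_r^\vee$ inside $\PGL(\vec{1})$. Restricting the diagonal $\PGL(\vec{1})$-action on $\Spec S \times \Lambda_r^\vee$ to this stabiliser pulls back to the natural $\Pic(\bX)^\vee$-action on $\Spec R \subseteq \Spec S$ coming from the $\Pic(\bX)$-grading on $R$. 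Crucially, this compatibility of actions is exactly the assertion that the intertwining homomorphism $f^* \colon \Lambda_Q \twoheadrightarrow \Pic(\bX)$ is dual to the inclusion $\Pic(\bX)^\vee \hookrightarrow \PGL(\vec{1})$.

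Second, I would observe that the $\PGL(\vec{1})$-equivariant morphism $\PGL(\vec{1}) \times \Spec R \to \Spec S \times \Lambda_r^\vee$, $(g,x) \mapsto g \cdot (x,1)$, has image $V$ and identifies $V$ with the quotient of $\PGL(\vec{1}) \times \Spec R$ by the anti-diagonal $\Pic(\bX)^\vee$-action. Taking the remaining quotient by the (now free) $\PGL(\vec{1})$-factor yields
$$\bigg[\frac{V}{\PGL(\vec{1})}\bigg] \simeq \bigg[\frac{\PGL(\vec{1}) \times \Spec R}{\PGL(\vec{1}) \times \Pic(\bX)^\vee}\bigg] \simeq \bigg[\frac{\Spec R}{\Pic(\bX)^\vee}\bigg],$$
via the usual contraction of a principal bundle.

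Finally, combining the equality $\cR_A = V$ established above with this calculation gives the stated isomorphism $[\Spec R/\Pic(\bX)^\vee] \simeq [\cR_A/\PGL(\vec{1})]$. By construction, the morphism of stacks is induced by the ring homomorphism $h \colon S \to R$ (which cuts out $\Spec R \hookrightarrow \Spec S$) together with $f^*$ (which matches the group actions). The only mild technical point, and the closest thing to an obstacle here, is verifying that the induced-space description of $V$ holds scheme-theoretically and $\PGL(\vec{1})$-equivariantly on the nose; but once one interprets the $\PGL(\vec{1})$-orbit in the evident functorial sense this is standard.
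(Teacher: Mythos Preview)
Your proposal is correct and follows essentially the same route as the paper: the paper already records $[\Spec R/\Pic(\bX)^\vee] \simeq [V/\PGL(\vec{1})]$ and establishes $\cR_A = V$ in the discussion immediately preceding the corollary, so the corollary is stated without further proof. Your argument simply unpacks the first of these two ingredients via the induced-space (balanced product) description of $V$, which is exactly the content the paper leaves implicit.
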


\begin{corollary}  \label{cor:stackfromCox}
There is a GIT parameter $\theta \in \Lambda_Q$ for which $\bX \simeq \bM_\textup{ref}^\theta$.
\end{corollary}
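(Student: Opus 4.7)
The plan is to invoke the preceding corollary, which under the Cox-capturing hypothesis identifies $\bM_\textup{ref}$ with $[\Spec R / \Pic(\bX)^\vee]$, and to transport the Mori-dream stack presentation of $\bX$ across this isomorphism in order to read off the required GIT parameter.

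First, since $\bX$ is Mori-dream, I would fix an ample $\theta_\bX \in \Pic(\bX)$ witnessing the presentation $\bX \simeq [(\Spec R \setminus V(B_{\theta_\bX})) / \Pic(\bX)^\vee]$, so that $V(B_{\theta_\bX})$ is precisely the GIT $\theta_\bX$-unstable locus of the $\Pic(\bX)^\vee$-action on $\Spec R$. Next, by Lemma~\ref{lm:generic} the map $f^* \colon \Lambda_Q \to \Pic(\bX)$ is surjective, so $\theta_\bX$ lifts to some $\theta \in \Lambda_Q$, viewed as a character of $\PGL(\vec{1}) = \Lambda_Q^\vee$. Dually, $\Pic(\bX)^\vee$ sits inside $\PGL(\vec{1})$ as the stabiliser of $1 \in \Lambda_r^\vee$ under the translation action.

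The heart of the argument is comparing GIT stabilities. Under the identification $\cR_A = V$ established just above, the $\theta$-linearisation of the trivial bundle on $\cR_A$ restricts on the slice $\Spec R \times \{1\}$ to the $\theta_\bX$-linearisation, because the stabiliser $\Pic(\bX)^\vee$ acts on characters through the quotient $f^* \colon \Lambda_Q \to \Pic(\bX)$. Moreover, every $\PGL(\vec{1})$-orbit in $\cR_A$ meets this slice by construction of $V$ as a $\PGL(\vec{1})$-saturation. A standard orbit-slice argument for GIT (equivalently, Hilbert-Mumford combined with the observation from Lemma~\ref{lm:generic} that stability depends only on the class of $\theta$ in $\Lambda_Q/\Lambda_r \simeq \Pic(\bX)$) then yields that the $\theta$-semistable locus of $\cR_A$ equals the $\PGL(\vec{1})$-saturation of $(\Spec R \setminus V(B_{\theta_\bX})) \times \{1\}$. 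Passing to quotient stacks gives
$$\bM_\textup{ref}^\theta \simeq [(\Spec R \setminus V(B_{\theta_\bX}))/\Pic(\bX)^\vee] \simeq \bX,$$
as required.

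The main obstacle is this last GIT comparison: one must verify that any destabilising one-parameter subgroup of $\PGL(\vec{1})$ at a point of the slice can be replaced, without changing its Hilbert-Mumford pairing against $\theta$, by a one-parameter subgroup of the stabiliser $\Pic(\bX)^\vee$. The $\Lambda_r$-insensitivity of stability recorded in Lemma~\ref{lm:generic} is precisely the tool for this reduction, since $\theta$ pairs trivially with any cocharacter landing in $\Lambda_r^\vee$. Once this is in place, the inclusions of semistable loci in both directions are straightforward.
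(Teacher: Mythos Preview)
Your approach is essentially the one the paper has in mind: the corollary is stated without proof, as an immediate consequence of the preceding identification $[\Spec R/\Pic(\bX)^\vee]\simeq[\cR_A/\PGL(\vec{1})]$ together with the Mori-dream presentation of $\bX$, and your orbit-slice comparison of semistable loci is exactly the natural way to spell this out. One small caveat: you invoke Lemma~\ref{lm:generic} for the surjectivity of $f^*\colon\Lambda_Q\to\Pic(\bX)$, but that lemma's proof uses the tilting hypothesis, which has been dropped in this subsection; the paper itself tacitly makes the same identification $\Lambda_Q/\Lambda_r\simeq\Pic(\bX)$ here, so this is a shared expository looseness rather than a gap in your argument.
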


\begin{example}
Insisting that $\cT$ is tilting does not guarantee that it captures the corresponding Cox ring: take $\bX= \bF_2:= \bP(\cO_{\bP^1} \oplus \cO_{\bP^1}(2))$ with the well-known tilting bundle $\cT= \cO \oplus \cO(f) \oplus \cO(h) \oplus \cO(f+h)$ (see for example \cite[Proposition~6.3]{C17}) where $f$ and $h-2f$ are the divisors corresponding to the fibre and the $(-2)$-section. 
\end{example}

\section{An example: GL projective spaces}\label{sec:HIMO}

Here we consider Geigle-Lenzing (GL) projective space as defined by Herschend-Iyama-Minamoto-Oppermann in \cite{HIMO}.
First we begin by briefly recalling their construction. 

The building blocks for GL projective spaces are: a polynomial ring $C:= k[t_0, \ldots, t_d]$ and $n$ linear forms $l_0,\ldots,l_n \in C$ each with an assigned integer weight $p_i \geq 2$. 
The forms are required to be in general position, i.e.\ every subset of at most $d+1$ forms is linearly independent. 
Define $$R':= k[t_0,\ldots, t_d, y_0, \ldots, y_n],  \quad I := \langle y_i^{p_i} - l_i \, | \, 0\leq i \leq n \rangle \subset R'$$ and take $R:= R'/I$. 
The ring $R$ is graded by the abelian group $$\bL:= \bZ \vecy_0 \oplus \cdots \oplus \bZ \vecy_n \oplus \bZ \vecc \,/\, \langle p_i \vecy_i - \vecc \, | \, 0\leq i \leq n \rangle$$ with $\deg(t_i)=\vecc$ and $\deg(y_i) = \vecy_i$. 
The corresponding stack is then given by $$\bX:= \bigg[\frac{\Spec(R) \setminus \{0\}}{\bL^\vee}\bigg].$$ 
The Picard group of $\bX$ is given by $\bL$ and has a partial ordering defined by $$\vecy \leq \vecz \iff \Hom(\vecy, \vecz) \neq 0.$$

The ring $R$ may be expressed in a slightly different fashion. 
We may assume that $n\geq d$ by adjoining an indeterminant $y_i$ for every extra $t_i$ variable and setting $l_i= t_i$ and $p_i=1$. 
Now the genericity assumption allows us to change variables so that the first $d+1$ forms $l_i$ are just $t_i$.
The ring $R$ is then naturally isomorphic to a quotient of $k[y_0, \ldots, y_n]$ by $n-d$ linear relations in the monomials $y_i^{p_i}$.
Keeping this presentation of $R$ in mind may make reading the rest of the section easier.

One may also think of $\bX$ as an iterated root stack, as observed in \cite[Observation 3.1.4]{HIMO}. This is done as follows: $\bX_0:= \bP^d$, $\bX_i := \bX_{i-1 (\cO(\vecc),l_i,p_i)}$ with $\bX_n \cong \bX$.

\begin{theorem}\cite[Theorem 6.1.2]{HIMO}
The following object is tilting in $D^b(\bX)$: $$\cT:= \bigoplus_{\vecy \in [0,d\vecc]} \cO(\vecy).$$
\end{theorem}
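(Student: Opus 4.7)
The statement is cited from \cite[Theorem 6.1.2]{HIMO}, so I will sketch how one might prove it independently. Recall that to show $\cT$ is tilting we need (a) $\Ext^k_{\bX}(\cT,\cT) = 0$ for all $k > 0$ and (b) that $\cT$ generates $D^b(\bX)$ as a triangulated category. Since $\cT$ is a direct sum of line bundles indexed by $[0, d\vecc] \subset \bL$, both conditions can be checked summand by summand.

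For the Ext vanishing, the plan is to use the finite flat morphism $\pi \colon \bX \to \bP^d$ coming from the presentation of $\bX$ as an iterated root stack over $\bP^d$, and the identification $\Ext^k_{\bX}(\cO(\vecy), \cO(\vecz)) \simeq H^k(\bX, \cO(\vecz - \vecy)) \simeq H^k(\bP^d, \pi_* \cO(\vecz-\vecy))$. First I would put any $\vecw \in \bL$ in normal form $\vecw = \sum a_i \vecy_i + b\vecc$ with $0 \le a_i < p_i$, and observe (by the explicit description of $\pi$ on an étale cover, or using the grading on $R$) that $\pi_* \cO(\vecw) \simeq \cO_{\bP^d}(b)$. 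For $\vecy, \vecz \in [0, d\vecc]$, the difference $\vecz - \vecy$ lies in $[-d\vecc, d\vecc]$, and a direct computation in normal form shows the resulting $b$ always satisfies $-d \le b \le d$. Then Bott vanishing on $\bP^d$ gives $H^k(\bP^d, \cO(b)) = 0$ for $k > 0$ in this range, finishing part (a).

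For generation, I would argue by induction along the iterated root stack tower $\bP^d = \bX_0, \bX_1, \ldots, \bX_n = \bX$, using the semi-orthogonal decomposition of $D^b$ of a root stack (as in Ishii--Ueda) which expresses $D^b(\bX_i)$ in terms of $D^b(\bX_{i-1})$ and $(p_i - 1)$ further copies indexed by line bundles $\cO(j\vecy_i)$, $1 \le j < p_i$. The base case is Beilinson's tilting bundle $\bigoplus_{0 \le j \le d} \cO_{\bP^d}(j\vecc)$ on $\bP^d$, and at each step one checks that the line bundles $\cO(\vecy)$ with $\vecy \in [0, d\vecc]$ newly introducing the root $\vecy_i$ suffice to generate the extra pieces in the decomposition. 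The combinatorial fact powering this is that $[0, d\vecc]$ is precisely the set of $\vecy$ with normal form $\sum a_i \vecy_i + b\vecc$, $0 \le a_i < p_i$, $0 \le b \le d - \sum \lceil a_i/p_i\rceil$ (or a similar explicit range), which is designed to match the pieces of the iterated semi-orthogonal decomposition.

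The main obstacle, in my view, is carrying out this bookkeeping: making sure the set $[0, d\vecc]$ under the partial order on $\bL$ matches exactly the generators required by the iterated semi-orthogonal decomposition, without overshooting (which would destroy Ext vanishing) or undershooting (which would fail generation). A cleaner alternative would be to find an explicit Koszul-type resolution of the diagonal $\Delta_* \cO_\bX \in D^b(\bX \times \bX)$ whose terms are external tensor products $\cO(\vecy) \boxtimes \cO(\vecy')^\vee$ with $\vecy, \vecy' \in [0, d\vecc]$; this would immediately imply generation, but constructing such a resolution in the $\bL$-graded setting is itself nontrivial.
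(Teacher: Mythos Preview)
The paper does not prove this theorem; it is quoted as \cite[Theorem~6.1.2]{HIMO} and used as a black box for the application in Section~\ref{sec:HIMO}. There is no argument in the paper to compare your sketch against.

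For what it is worth, your outline is sound and close in spirit to how one actually proves this. The pushforward identity $c_*\cO(\vecw)\simeq\cO_{\bP^d}(b)$ for $\vecw=\sum_i a_i\vecy_i+b\vecc$ in normal form ($0\le a_i<p_i$) is correct, and the key inequality $b\ge -d$ for $\vecw=\vecz-\vecy$ with $\vecy,\vecz\in[0,d\vecc]$ does hold: writing each in normal form and reducing, the $\vecc$-coefficient of $\vecz-\vecy$ is $b'-b-\#\{i:a'_i<a_i\}$; since $a'_i<a_i$ forces $a_i>0$, while membership in $[0,d\vecc]$ gives $b'\ge 0$ and $b\le d-\#\{i:a_i>0\}$, this quantity is at least $b'-d\ge -d$. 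Combined with vanishing of higher cohomology of $\cO_{\bP^d}(b)$ for $b\ge -d$, that settles part~(a). Your generation argument via the semi-orthogonal decomposition of a root stack is the standard route for part~(b). The bookkeeping you flag as the main obstacle is genuine but routine once the normal-form description of $[0,d\vecc]$ is written down.
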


Let $Q$ be the quiver of sections of the collection of line bundles $$\{\cO(\vecy)\,|\,\vecy \in [0,d\vecc]\}\subset \Pic(\bX).$$
The algebra $\End(\cT)$ is then a quotient of $kQ$ by some ideal $I$.
We will use $\vecy$ to denote the vertex of $Q$ corresponding to the line bundle $\cO(\vecy)$ and $a_{\vecm+i}$ to denote the arrow corresponding to the section $y_i$ from $\vecm$ to $\vecm+\vecy_i$ for $\vecm \in [0, d\vecc - \vecy_i]$.
Take $\bM$ to be the moduli space of quiver representations of $kQ/I$ with dimension vector $\vec{1}$ and let $S$ be its Cox ring.

As in Subsection~\ref{ssec:nice}, we have a tautological homomorphism $S \rightarrow R$.
Notice that every arrow in $Q$ is of this form $a_{\vecm+i}$ for $\vecm \in [0, d\vecc - \vecy_i]$.
Therefore for every $\vecm \in [0, d\vecc - \vecy_i]$ and $0\leq i\leq n$ there we have a generator $y_{a_{0+i}} - y_{a_{\vecm+i}}$ in the corresponding ideal $I_{de}$.
Thus $S/I_{de}$ is a quotient of $k[y_0,\ldots, y_n]$: the kernel of $S/I_{de} \rightarrow k[y_{a_0},\ldots, y_{a_n}]$ is generated by the $n-d$ relations between the arrows with head at $0$ and tail at $\vec{c}$ but those are the equations between the monomials $y_{i}^{p_i}$ in $R$.
Hence $S/I_{de} \simeq R$ and $\cT$ captures the Cox ring of $\bX$.
We thus have:

\begin{corollary} \label{cr:HIMO}
For the GL projective space $\bX$, we have $\bX \simeq \bM^{\cT} \simeq \bM^\theta_\textup{ref}$ for some $\textup{GIT}$ stability parameter $\theta$.
\end{corollary}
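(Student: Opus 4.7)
The plan is to chain together the three prior results: the tilting theorem of Herschend--Iyama--Minamoto--Oppermann, Theorem~\ref{thm:mutualinverses} identifying $\bM^{\cT}$ with $\bM_{\textup{ref}}^{\circ}$, and Corollary~\ref{cor:stackfromCox} which produces a GIT parameter $\theta$ realising $\bX$ as $\bM_{\textup{ref}}^{\theta}$ under the assumption that $\cT$ captures the Cox ring of $\bX$. The discussion immediately preceding the corollary statement has already done the real work, namely verifying the Cox-capture condition in Definition~\ref{def:captureCox} for the specific generating bundle $\cT = \bigoplus_{\vecy \in [0,d\vecc]} \cO(\vecy)$ on the GL projective space. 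So essentially all that remains is to package this verification and cite the earlier structural results in the correct order.

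Concretely, first I would invoke the HIMO tilting theorem to see that $\cT$ is a tilting bundle which is manifestly a direct sum of line bundles, hence the hypotheses of Subsection~\ref{ssec:reftopic} are satisfied. I would then quote the identification $\bX \simeq \bM^{\cT}$ coming from Theorem~\ref{thm:stackfromquiver} (or equivalently the composition $\bX \simeq \bM_{\textup{ref}}^{\circ} \simeq \bM^{\cT}$ arising from Corollary~\ref{cor:mref} together with Theorem~\ref{thm:mutualinverses}), which gives the first half of the required isomorphism $\bX \simeq \bM^{\cT}$.

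Next I would recall the already established identification $S/I_{\textup{de}} \simeq R$, i.e.\ that $\cT$ captures the Cox ring of $\bX$: every arrow of the quiver of sections $Q$ has the form $a_{\vecm+i}$ corresponding to multiplication by $y_i$, so the generators $y_{a_{0+i}} - y_{a_{\vecm+i}}$ of $I_{\textup{de}}$ identify all arrows labelled by the same $y_i$, and modding out leaves exactly $k[y_0,\ldots,y_n]$ modulo the $n-d$ linear relations among the $y_i^{p_i}$ defining $R$. With the Cox-capture hypothesis now verified, Corollary~\ref{cor:stackfromCox} supplies a GIT stability parameter $\theta \in \Lambda_Q$ for which $\bX \simeq \bM_{\textup{ref}}^{\theta}$, giving the second half of the claim.

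I do not expect a real obstacle here: the only nontrivial content is the Cox-capture verification, and that verification is essentially the combinatorial observation above. The only subtlety worth flagging in writing is to be precise about why the homogenisations of the HIMO relations $y_i^{p_i} - l_i$ coincide, up to shifts in $\Lambda_r$, with the relations in $I$ once one fixes the identification of arrows $a_{\vecm+i}$ with the variable $y_i$; but this follows directly from the description of $I_{\textup{de}}$ and the fact that, after modding by $I_{\textup{de}}$, both sides become the $\bL$-graded ring $R$.
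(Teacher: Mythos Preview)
Your proposal is correct and follows essentially the same route as the paper: the corollary is stated immediately after the sentence ``Hence $S/I_{\textup{de}} \simeq R$ and $\cT$ captures the Cox ring of $\bX$. We thus have:'', so the paper's proof is precisely the Cox-capture verification you describe, combined with invoking Corollary~\ref{cor:stackfromCox} for the $\bM_{\textup{ref}}^{\theta}$ part and the tilting-based results of Section~\ref{sec:refined} (Theorem~\ref{thm:mutualinverses} and Corollary~\ref{cor:mref}) for the $\bM^{\cT}$ part. The only minor point is that if you cite Theorem~\ref{thm:stackfromquiver} directly you would owe the reader a word on the ampleness hypothesis there, so the cleaner citation chain is the one you also mention, via Corollary~\ref{cor:mref} together with Theorem~\ref{thm:mutualinverses}.
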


\bibliographystyle{amsplain}
\bibliography{references}

\end{document}